\newcolumntype{C}{>{$}c<{$}} 
\tikzset{cross/.style={cross out, draw=black, fill=none, minimum size=2*(#1-\pgflinewidth), inner sep=0pt, outer sep=0pt}, cross/.default={2pt}}
\theoremstyle{theorem}
\newtheorem{theorem}{Theorem}[section]
\newtheorem{lemma}[theorem]{Lemma}
\newtheorem{proposition}[theorem]{Proposition}
\theoremstyle{definition}
\theoremstyle{definition}
\theoremstyle{definition}
\newtheorem*{remark}{Remark}
\theoremstyle{definition}
\newtheorem*{example}{Example}
\theoremstyle{definition}
\newtheorem{definition}[theorem]{Definition}
\theoremstyle{definition}
\theoremstyle{definition}
\theoremstyle{definition}
\newcommand{\ZZ}{\mathbb{Z}}
\newcommand{\RR}{\mathbb{R}}
\newcommand{\CC}{\mathbb{C}}
\newcommand{\NN}{\mathbb{N}}
\newcommand{\TT}{\mathbb{T}}
\newcommand{\Prim}{\hbox{\sf RP}}
\newcommand{\STS}{\hbox{\sf STS}}
\newcommand{\RSTS}{\hbox{\sf rSTS}}
\newcommand{\PSTS}{\hbox{\sf pSTS}}
\newcommand{\calH}{\mathcal{H}}
\newcommand{\Hol}{\hbox{\sf Hol}}
\newcommand{\Veech}{SL}
\newcommand{\Unit}{\hbox{\sf Unit}}
\newcommand{\SL}{\mathrm{SL}}
\newcommand{\sdot}{\! \cdot \!}
\begin{document}
\title{Statistics of square-tiled surfaces: symmetry and short loops}

\author{Sunrose Shrestha}
\address{Department of Mathematics, Tufts University, 503 Boston Avenue, Medford, MA 02155}
\email{sunrose.shrestha@tufts.edu}

\author{Jane Wang}
\address{Department of Mathematics,  Massachusetts Institute of Technology, 77 Massachusetts Ave, Cambridge, MA 02139}
\email{janeyw@mit.edu}



\begin{abstract}
Square-tiled surfaces are a class of translation surfaces that are of particular interest in geometry and dynamics because, as covers of the square torus, they share some of 
its simplicity and structure.  
In this paper, we study  counting problems that result from focusing on properties of the square torus one by one.
After drawing insights from experimental evidence, we consider the implications between these properties and as well as the frequencies of these properties in each stratum of translation surfaces.
\end{abstract} 
\maketitle

\section{Introduction}

Translation surfaces form an important class of metrics on two-manifolds, namely those that admit an atlas whose 
transition functions are given by Euclidean translation.  They can be viewed from several other, equivalent perspectives:
\begin{itemize}
\item complex analysis:  a translation surface is a pair $(X,\omega)$ where $X$ is a Riemann surface and
$\omega$ is an Abelian differential
(away from finitely many singular points, $\omega$  is the pullback of the one-form $dz$);
\item Euclidean geometry:  a translation surface is a collection of Euclidean polygons with sides glued by 
translation in parallel pairs.
\end{itemize}

\begin{wrapfigure}{R}{0.30\textwidth}
\vspace{-15pt}
\centering
\begin{tikzpicture}
    \draw (0,0) -- (4,0); 
    \draw (0,1) -- (4,1);
    \draw (0,2) -- (2,2);
    \draw (0,0) -- (0,2);
    \draw (1,0) -- (1,2);
    \draw (2,0) -- (2,2);
    \draw (3,0) --(3,1);
    \draw (4,0) -- (4,1);
    
    \node at (2.5, 0) [below=.075cm] {$a$};
     \node at (3.5, 1) [above] {$a$};
     \node at (2.5, 1) [above] {$b$};
     \node at (3.5, 0) [below=0cm] {$b$};
     
     \foreach \i in {1,...,4}
{ \node at (\i-0.5, 0.5)  {\i};
}

    \foreach \i in {5,6}
{ \node at (\i-4.5, 1.5)  {\i};
}

     \draw [fill] (0,0) circle [radius=0.05];
     \draw [fill] (0,2) circle [radius=0.05];    
      \draw [fill] (2,0) circle [radius=0.05];
     \draw [fill] (2,2) circle [radius=0.05];   
      \draw [fill] (3,1) circle [radius=0.05];   
       \draw [fill] (4,0) circle [radius=0.05];

        \draw (0,1) node[cross=3] {};
        \draw (2,1) node[cross=3] {};
        \draw (3,0) node[cross=3] {};
        \draw (4,1) node[cross=3] {};

\end{tikzpicture}
\caption{A genus two surface from $\STS_6$. The labels $a,b$ indicate edge identifications (the others are identified with their opposites);  the vertices are marked
to indicate the induced identifications, which produce two cone points of angle $4\pi$, apart from which the surface is flat.
The permutation pair $\sigma=(1234)(56)$, $\tau=(15)(26)(34)$ describes the horizontal and vertical gluings.}
\label{fig:STS}
\end{wrapfigure}
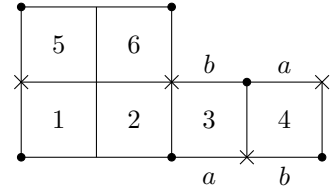

A square-tiled surface (or STS) is a special case of translation surface in which the Riemann surface $X$ covers
 the square torus $\TT=\CC/(\ZZ[i])$, branched over only one point; equivalently, the Euclidean polygons can be taken to be squares.
 Let the set of such surfaces made from $n$ squares be denoted by $\STS_n$. STSs have a third, combinatorial, description as a pair 
of permutations from $S_n$ describing the vertical and horizontal gluing of squares. 
 This is illustrated in Figure~\ref{fig:STS}.

One fundamental tool  is the action of $SL_2(\RR)$ by linear transformations on the space of translation surfaces.
Translation surfaces have been widely studied in connection to Teichm{\"u}ller theory.  
Teichm\"uller space parametrizes all Riemann surface structures on a fixed topological surface; each 
translation surface defines a one-parameter family in Teichm\"uller space via the action of the diagonal subgroup
of $SL_2(\RR)$, and these lines turn out to be geodesic in the famous Teichm\"uller metric.
Many distinguished authors, including Masur, Veech, Eskin, Mirzakhani, and Avila, have developed a formidable
literature on Teichm\"uller dynamics with this as a starting point.

Translation surfaces also naturally arise in the study of polygonal billiards. Given a polygonal billiard table with rational multiples of $\pi$ as the angles, one applies an unfolding construction due to Katok and Zemlyakov to associate a closed translation surface to the polygonal billiard. Billiard trajectories  become geodesic curves on the associated translation surface. 
The long-term behavior of such trajectories can then be explored by studying curves on translation surfaces. 

An aid to studying the symmetry of a translation surface is 
its stabilizer in $SL_2(\RR)$, called its {\em Veech group} or group of affine homeomorphisms.
A celebrated special case of translation surfaces is those with lattice Veech groups, sometimes known as {\em Veech surfaces}, 
which  are known to have very strong geometrical and dynamical properties (\cite{forestertangtao,smillieweiss,veech, hubertschmidt}). For example, Veech surfaces have so-called ``optimal dynamics": in any direction on the surface, each infinite trajectory is either periodic or dense. Square-tiled surfaces are always Veech surfaces since their Veech groups are  
commensurable to $SL_2(\ZZ)$.  

\subsection*{Counting problems and designer examples}
For any translation surface, we can define its set of {\em holonomy vectors} as the collection of
Euclidean displacement vectors between cone points, which belong to $\RR^2$.  
Square-tiled surfaces can be characterized as translation surfaces with holonomy vectors in $\ZZ^2$.
Because of this, STSs are sometimes thought of as the integer points in the space of translation surfaces. 
Translation surfaces are naturally stratified by the number and cone angle of singularities; for instance, 
the surface in Figure~\ref{fig:STS} lies in the stratum of genus-two surfaces with two simple cone points,
called $\calH(1,1)$. In general, the stratum $\calH(\alpha)$, where $\alpha = (\alpha_1, \dots, \alpha_s)$, denotes the collection of translation surfaces with $s$ singular points of angle $2\pi(\alpha_i+1)$ for $i=1, \dots, s$. These strata can be endowed with an $SL_2(\RR)$ invariant measure, and it is known that with respect to this measure, the strata of unit area translation surfaces have finite volume. 
Recall that in Euclidean space, one can estimate the volume of a ball of radius $r$
by the number of integer points inside it---the classic Gauss circle problem then asks for the correct bound $|E(r)|$ on the difference between this count and the volume of the ball. In exactly the same way, studying counting problems in $\STS_n$ for larger and larger $n$
gives volume estimates for strata of translation surfaces  (\cite{eskinokounkov,zorich}).
The many applications of such asymptotic theorems  
have also spurred some other counting problems, such as counting by orbits under $SL_2(\ZZ)$ (\cite{lelievreroyer}) or by 
the cylinder decomposition structure (\cite{delgouzogzor,shrestha}). 
Zmiaikou even uses STSs to compute the probability that two permutations, whose commutator is a product of two disjoint transpositions, generate the full symmetric group,
taking advantage of the permutation pair description above (\cite{zmiaikou}).

\begin{wrapfigure}{R}{0.30\textwidth}
\vspace{-5pt}
\begin{tikzpicture}
    \draw (0,0) -- (4,0); 
    \draw (0,1) -- (4,1);
    \draw (0,2) -- (1,2);
    \draw (0,0) -- (0,2);
    \draw (1,0) -- (1,-1);
      \draw (2,0) -- (2,-1);
        \draw (1,-1) -- (2,-1);
    \draw (1,0) -- (1,2);
    \draw (2,0) -- (2,2);
    \draw (3,0) -- (3,2);
        \draw (3,0) -- (3,-1);
         \draw (3,-1) -- (4,-1);

     \draw (2,2) -- (3,2);
    \draw (4,0) -- (4,1);
 \draw (4,0) -- (4,-1);
     
     \draw[thin] (0.5,1.85) -- ++(0,0.3);
      \draw[thin] (2.5,-0.15) -- ++(0,0.3);
      
      \draw[thin] (0.45,-0.15) -- ++(0,0.3);
       \draw[thin] (0.55,-.15) -- ++(0,0.3);
       
       \draw[thin] (2+0.45,-0.15+2) -- ++(0,0.3);
       \draw[thin] (2+0.55,-.15+2) -- ++(0,0.3);
       
        \draw[thin] (3+0.40,-0.15-1) -- ++(0,0.3);
          \draw[thin] (3+0.50,-0.15-1) -- ++(0,0.3);
           \draw[thin] (3+0.60,-.15-1) -- ++(0,0.3);
           
            \draw[thin] (1+0.40,-0.15+1) -- ++(0,0.3);
          \draw[thin] (1+0.50,-0.15+1) -- ++(0,0.3);
           \draw[thin] (1+0.60,-.15+1) -- ++(0,0.3);

          \draw[thin] (1+0.50,-0.15-1) -- ++(0,0.3);
          \draw[thin] (1+0.35,-0.15-1) -- ++(0.3,0.3);

	\draw[thin] (3+0.50,-0.15+1) -- ++(0,0.3);
          \draw[thin] (3+0.35,-0.15+1) -- ++(0.3,0.3);

 	\draw[thin] (-0.15, 1.5) -- ++(0.3,0);
   	 \draw[thin] (4-0.15, 1.5-2) -- ++(0.3,0);
	 
	  \draw[thin] (1-0.15, 1+0.45) -- ++(0.3,0);
       \draw[thin] (1-.15, 1+0.55) -- ++(0.3,0);

 	\draw[thin] (1-0.15, -1+0.45) -- ++(0.3,0);
       \draw[thin] (1-.15, -1+0.55) -- ++(0.3,0);

  \draw[thin] (-0.15+2, 1+0.4) -- ++(0.3,0.);
          \draw[thin] (-0.15+2, 1+0.5) -- ++(0.3,0);
           \draw[thin] (-.15+2, 1+0.6) -- ++(0.3,0);
           
            \draw[thin] (-0.15+2, -1+0.4) -- ++(0.3,0.);
          \draw[thin] (-0.15+2, -1+0.5) -- ++(0.3,0);
           \draw[thin] (-.15+2, -1+0.6) -- ++(0.3,0);
           
           \draw[thin] (-0.15+3, 1+0.5) -- ++(0.3,0);
          \draw[thin] (0.15+3, 1+0.35) -- ++(-0.3,0.3);

 	 \draw[thin] (-0.15+3, -1+0.5) -- ++(0.3,0);
          \draw[thin] (0.15+3, -1+0.35) -- ++(-0.3,0.3);
\end{tikzpicture}
\captionof{figure}{The EW surface} 
\label{fig:pig}
\end{wrapfigure}
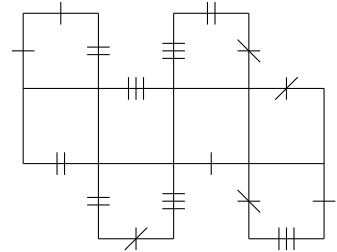

Furthermore, particular examples of STSs have sometimes been very useful for building theory.
The most famous example is a surface in $\STS_8$ that is colorfully called the {\em Eierlegendre Wollmichsau}, 
or ``the egg-laying wool-milk sow," because of its many surprising applications.  First explored by 
Herrlich and Schmithusen in \cite{herrlichschmit}, it can be seen below in Figure~\ref{fig:pig}, and we will refer to 
it as the EW surface.  

The EW surface has properties similar to the square torus $\TT$ in multiple ways, 
and in this paper we identify  some of these structural similarities and analyze them separately. 
We prove implications and non-implications between these properties and give experimental evidence suggesting
counting statements, with some rigorous finiteness results as well. We will also present new examples of simple STSs which possess (or lack) some of the torus-like features we explore. For instance, the set of holonomy vectors of 
the $\TT$ is the relatively prime pairs in $\ZZ^2$, denoted $\Prim$, and sometimes called visible points. So, for us, a \emph{visibility torus} is an STS for which $\Prim$ is contained in the set of holonomy vectors of $O$, denoted $\Hol(O)$. We obtain the following result, regarding the visibility properties of STSs in a fixed stratum.

\begin{restatable*}[Visibility] {theorem}{faketori}
\label{faketori}
For a fixed stratum, $\calH(\alpha) = \calH(\alpha_1, \dots, \alpha_s)$, the visibility properties of STSs in the stratum, is governed by their number of squares, $n$, in the following way:

\begin{figure}[h!!!!]
\begin{tikzpicture}
\node at (-3,0){$\calH(\alpha)$};
\draw[-latex]   (-2,0) -- (13,0) ; 

\draw[shift={(-2,0)},color=black] (0,0.2) -- (0,-0.2) node[below] 
{$0$};



\draw[{[-)}, thick] (-2,.4) -- (2, .4);
\draw[{(-]}, thick] (2,.4) -- (6, .4);
\draw[{(- >}, thick] (11,.4) -- (13, .4);

\node[scale=0.7] at (-0.2,1){$\STS_n \cap \calH(\alpha) = \emptyset$};
\node[shift={(2,0)},scale=0.7]at (0, 1){$\Prim  = \Hol(O)$};
\node[scale=0.7] at (4,1){$\Prim \subsetneq  \Hol(O)$};
\node[scale=0.7] at (12,1){$\Prim - \Hol(O) \neq \emptyset$};

\draw[shift={(2,0)},color=black, -latex] (0,0.9)--(0,0.5);
\node[scale=0.7] at (8.35,0.8){};
\draw[shift={(-1,0)},-latex] (3,-1.5) -- (9.5,-1.5) node[midway,fill=white] {Number of squares};

\draw[shift={(2,0)},color=black] (0,0.2) -- (0,-0.2) node[below] 
{$2g+s-2$} ;
\draw[shift={(6,0)},color=black] (0, 0.2) -- (0,-0.2) node[below] 
{$4g+2s-5$} ;
\draw[shift={(11,0)},color=black] (0,0.2) -- (0,-0.2) node[below] 
{$N(\alpha)$} ;











\end{tikzpicture}
\label{fig:mconstants}
\end{figure}

\begin{enumerate}
\item There are no STSs in $\calH(\alpha)$ with fewer than $2g+s-2$ squares. 
\item $n=2g+s-2$ if and only if $\Hol(O) = \Prim$ for all $O \in \calH(\alpha) \cap \STS_n$
\item $2g+s-2 < n \leq 4g+2s-5$ implies that all $O \in \calH(\alpha) \cap \STS_n$ are visibility tori; in fact $\Prim \subsetneq \Hol(O)$.
\item $n=4g+2s-4$ implies there exists $O \in \calH(\alpha) \cap \STS_n$ that is not a visibility torus.
\item There exists $N(\alpha)$ such that if $n > N(\alpha)$ then all $O \in \calH(\alpha) \cap \STS_n$ are non-visibility tori, i.e. for all such STSs, there exists $v \in \Prim - \Hol(O)$.
\end{enumerate}

\end{restatable*}

A key result that aids in proving this theorem is the following result by B. Dozier  regarding the number of saddle connections of length at most $r$ in a translation surface $X$.

\begin{theorem}[Dozier \cite{dozier}]\label{thm:dozier} Given a stratum $\calH(\alpha)$, there exists a constant $c$ such that for any $X \in  \calH(\alpha)$ of unit area, and any interval $I \subset S^1$, there exists a constant $R_0(X)$ such that for all $R > R_0(X)$,
$$N(X;R; I) \leq c \cdot |I| \cdot R^2$$ where $N(X;R;I)$ denotes the number of saddle connections in $X$ of length at most $R$ and with direction in $I \subset S^1$. 
\end{theorem}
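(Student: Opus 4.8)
The plan is to deduce the sector bound from a uniform \emph{quadratic upper bound for counting in round balls}, exploiting the fact that holonomy vectors transform linearly under the $SL_2(\RR)$ action. Write $\Lambda(X)\subset\RR^2$ for the set of holonomy vectors of saddle connections of $X$, so that $N(X;R;I)=|\Lambda(X)\cap\mathrm{Sec}(R,I)|$, where $\mathrm{Sec}(R,I)$ is the sector of radius $R$ and angular interval $I$. Since $\Lambda(gX)=g\,\Lambda(X)$ for every $g\in SL_2(\RR)$, counting vectors of $X$ in a region $\Omega$ is the same as counting vectors of $gX$ in $g\Omega$. First I would rotate by some $r_\phi\in SO(2)$ so that $I$ is centered at the horizontal direction, and then apply the geodesic flow $a_t=\mathrm{diag}(e^t,e^{-t})$ with $e^{t}=|I|^{1/2}$. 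A direct computation shows that this contracts the thin sector into the square $[-|I|^{1/2}R,|I|^{1/2}R]^2$, so that, with $Y=a_t r_\phi X$ and $\rho=\sqrt2\,|I|^{1/2}R$,
\[
N(X;R;I)\ \le\ |\Lambda(Y)\cap B_\rho|,\qquad \rho^2 = 2|I|R^2 .
\]
Thus it suffices to prove a uniform ball bound and feed it back through this reduction; the complementary regime of $|I|$ bounded below reduces directly to the ball bound with $\rho=R$.

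The substance is therefore the following uniform statement, which I will call the \emph{ball lemma}: there is $c_0=c_0(\alpha)$ so that for every unit-area $Y\in\calH(\alpha)$ one has $\limsup_{\rho\to\infty}\rho^{-2}|\Lambda(Y)\cap B_\rho|\le c_0$. To prove it I would pass to the Siegel--Veech transform $\widehat{\varphi}(Y)=\sum_{v\in\Lambda(Y)}\varphi(v)$ of a fixed bump $\varphi\ge\mathbf 1_{B_1}$ and exploit the integral-geometric identity obtained by integrating $\widehat{\mathbf 1_{B_1}}$ over the orbit $\{a_s r_\theta Y\}$. A vector $v$ of length $\ell\ge1$ satisfies $a_s r_\theta v\in B_1$ only for $s\gtrsim\log\ell$ and for a $\theta$-window of width $\asymp (e^s\ell)^{-1}$, whence
\[
\int_0^{2\pi}\!\!\int_0^{T}\mathbf 1_{B_1}(a_s r_\theta v)\,ds\,d\theta\ \asymp\ \frac{1}{\ell^2}\bigl(1-\ell\,e^{-T}\bigr)_+ .
\]
Summing over $v\in\Lambda(Y)$ turns the left side into $\int_0^{2\pi}\!\int_0^{T}\widehat{\mathbf 1_{B_1}}(a_s r_\theta Y)\,ds\,d\theta$, while the right side becomes a weighted count that, by a routine Tauberian (monotonicity) argument, is comparable to $T\cdot\limsup_\rho\rho^{-2}|\Lambda(Y)\cap B_\rho|$. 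Hence the ball lemma is equivalent to a uniform \emph{linear} bound on this flow-integral of the Siegel--Veech transform.

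The heart of the matter—and the step I expect to be the main obstacle—is bounding that flow-integral uniformly over the stratum, including for surfaces deep in the cusp where $\widehat{\mathbf 1_{B_1}}$ is large. Here I would invoke quantitative non-divergence for the geodesic flow (Eskin--Masur): the circle average $\int_{S^1}\widehat{\mathbf 1_{B_1}}(a_s r_\theta Y)\,d\theta$ is bounded by a constant $C(\alpha)$ for all $s\ge s_0(Y)$, because the expanding circle $r_\theta Y$ cannot spend a definite proportion of its time arbitrarily far out in the thin part. Integrating this bound in $s$ gives linear growth of the flow-integral with uniform slope, which by the previous paragraph yields the ball lemma with a uniform $c_0$.

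Finally, I would choose $R_0(X)$ large enough that $\rho=\sqrt2\,|I|^{1/2}R$ exceeds the (surface-dependent) threshold past which the $\limsup$ in the ball lemma is realized; the reduction then gives $N(X;R;I)\le c_0\rho^2=2c_0\,|I|R^2$ for all $R>R_0(X)$, so the theorem holds with $c=2c_0$. The delicate points to be checked are the uniformity of the Eskin--Masur constant across the whole stratum and the Tauberian passage from the integrated bound to the pointwise count, both of which rest on monotonicity of $\rho\mapsto|\Lambda(Y)\cap B_\rho|$.
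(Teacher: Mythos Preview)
The paper does not prove this theorem. Theorem~\ref{thm:dozier} is stated as a result of Dozier and is invoked as a black box in the proof of Proposition~\ref{prop:visupperbound}; no argument for it appears anywhere in the paper. There is therefore no ``paper's own proof'' against which to compare your proposal.

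For what it is worth, your sketch is broadly in the spirit of the Eskin--Masur/Dozier machinery (Siegel--Veech transform plus quantitative non-divergence of circle averages). One point deserves care, however. In your reduction you replace $X$ by $Y=a_t r_\phi X$ with $e^t=|I|^{1/2}$, so the ``surface-dependent threshold past which the $\limsup$ in the ball lemma is realized'' is a threshold for $Y$, not for $X$, and hence depends on $I$ through $t$. If $R_0$ is to depend only on $X$, as the notation $R_0(X)$ indicates, you must show that this threshold can be controlled uniformly as $Y$ moves along the geodesic orbit of $X$; this is exactly where the quantitative recurrence estimates do real work, and it is more delicate than your last paragraph suggests. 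Absent that, your argument as written only yields an $R_0$ depending on both $X$ and $|I|$.
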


Dozier's result follows that of Masur \cite{masur2, masur3}, who showed that for a fixed translation surface $X$ there are quadratic lower and upper bounds on the growth rate of $N(X;R;I)$ depending on $X$ and of Eskin-Masur \cite{eskinmasur} who showed that for almost every surface in a fixed stratum, there exists an exact quadratic asymptotic for the growth rate of saddle connections, with the constant solely depending on the stratum. Since STSs form a measure zero set in the space of translation surfaces, the Eskin-Masur result is not enough for our purposes, and we must resort to Dozier's theorem.

Apart from the holonomy set, there are other ways that surfaces can be torus-like, one of which is by having $SL_2(\ZZ)$ as their Veech groups.
We investigate this property, proving for example the following simple statement.
\begin{restatable*}[Symmetry]{theorem}{symmetrytori}
\label{symmetrytori}
There are no genus-two reduced STSs with Veech group $SL_2(\ZZ)$.
\end{restatable*}
\emph{Reduced} STSs are those that do not cover a torus branched over a single point, other than $\TT$.
Finally, the torus has short curves in both the vertical and horizontal direction. We close by 
 investigating the asymptotics of STSs in the stratum $\calH(2)$ with a unit (length-one) holonomy vector which we call a \emph{unit saddle}.
If we fix a stratum, one can easily construct an infinite family of STSs with a unit saddle. 
However, in that fixed stratum,  the frequency of such surfaces  may still  tend to zero with the number of squares.

\begin{restatable*}[Unit saddles] {theorem}{total}
\label{total}
The number of $n$-square surfaces in $\calH(2)$ is $\Theta(n^3)$, 
while the number of $n$-square surfaces in $\calH(2)$ with a unit saddle is $\Theta(n^2)$. 
\end{restatable*}

One consequence of this result is that asymptotically almost surely, a random STS in $\calH(2)$ has no unit saddle. Since $\calH(2)$ has only one cone point, 
square-tiled surfaces with unit saddles are precisely those whose systoles (shortest simple closed curves) are of length 1 in that stratum. 
This fits into a greater body of work on the study of systoles of translation surfaces, including recent work by Boissy and Geninska \cite{boissygeninska} studying the local maxima of the shortest holonomy vector function on strata with $g\geq 3$, and by Judge and Parlier \cite{judgeparlier} studying the maximum number of systoles in genus 2.

Our paper is organized the following way. Section \ref{sec:background} contains relevant background information on translation surfaces and square-tiled surfaces. Section \ref{sec:faketori} contains the various notions of `fake tori'---STSs that are similar to $\TT$ in some aspect---including  proofs of  implications and non-implications between the different types. Section \ref{sec:finiteness} contains proofs Theorems \ref{faketori} and \ref{symmetrytori}. Section \ref{sec:unitloops} is devoted to proving Theorem \ref{total}. 

\textbf{Acknowledgements.} We are extremely grateful to Moon Duchin for organizing the Polygonal Billiards Research Cluster where this project originated, for suggesting this project, and for her constant support and valuable feedback. We are also grateful for the feedback and conversations with Samuel Leli\`evre. We are grateful for Madeleine Elyze and Luis Kumanduri for helping during the initial explorations. We thank  Vincent Delecroix and Justin Lanier for the prompt and helpful feedback on our first version. We also express our many thanks to the participants and visitors of the Billiards research cluster for numerous helpful discussions.

\textbf{Funding} This work began in the Polygonal Billiards Research Cluster held at Tufts University in Summer 2017 and was supported by the National Science Foundation under grant [DMSCAREER-1255442]. The second author was also supported by the National Science Foundation Graduate Research Fellowship under Grant No. 1122374.

\section{Background}
\label{sec:background}

\subsection{Translation surfaces and their moduli spaces} 

A \textbf{translation surface} can be defined geometrically as a collection of polygons in the plane with sides identified in parallel opposite pairs by translation, up to equivalence by cut and paste operations. They can also be defined as pairs $(X, \omega)$ where $\omega$ is a holomorphic one-form on the Riemann surface $X$. Translation surfaces are locally flat except at finitely many \textbf{cone points} or \textbf{singularities} where they have a cone angle of $2 \pi n$ for some integer $n \geq 2$. A cone point of angle $2 \pi n$ will correspond to a zero of the one-form $\omega$ of order $n-1$. 

A Gauss-Bonnet type theorem holds for translation surfaces: if $g$ is the genus of the surface and $\alpha_1, \ldots, \alpha_s$ are the degrees of the zeros of the one-form, then the surface must satisfy the relation $\sum_{i=1}^s \alpha_i = 2g-2.$  Genus $g$ translation surfaces then fall into finitely many \textbf{strata} $\mathcal{H}(\alpha)$ where $\alpha = (\alpha_1, \ldots, \alpha_s)$ describes the orders of the zeros of the one-form, $s$ zeros of orders $\alpha_1, \ldots, \alpha_s$. For example, Figure \ref{fig:STS} shows a translation surface in $\calH(1,1)$. On a given translation surface, a {\bf saddle connection} is a straight-line geodesic segment whose endpoints are cone points but with no cone points on the interior.  The {\bf holonomy} of a saddle connection is the corresponding Euclidean displacement vector. 

There is a natural $SL_2 \RR$ action on a stratum of translation surfaces. If we think of a translation surface as a collection of polygons in $\RR^2$ with side identifications, then we can act on this surface by a matrix $M \in SL_2 \RR$ by acting on the whole of $\RR^2$ by $M$. The new surface is the linearly transformed collection of polygons with the same side identifications. The stabilizer of this action as a subgroup of $SL_2 \RR$ is defined as the \textbf{Veech group} of the surface which will be denoted by $SL(X, \omega)$. 

\subsection{Introduction to square-tiled surfaces} 

A \textbf{square-tiled surface} is a translation surface that is a covering of the square torus, branched over only one point. Geometrically, square-tiled surfaces can be constructed from finitely many unit squares, with sides glued in parallel opposite pairs. There is a natural $SL_2 \ZZ$ action on square-tiled surfaces that preserves both the stratum that the surface is in and the number of squares that is has. 

Sometimes, we wish only to consider square-tiled surfaces that do not cover other square-tiled surfaces. A square-tiled surface is called \textbf{reduced} if the only genus one square-tiled surface that it covers (branched over a single point) is the torus with only one square. Equivalently, a square-tiled surface is reduced if the $\ZZ$ lattice generated by its holonomy vectors (sometimes referred to as the \textbf{lattice of periods} is $\ZZ^2$). A square-tiled surface is called \textbf{primitive} if the only square-tiled surfaces that it covers are itself and the one square torus. Note that primitive STSs are reduced as well.

Recall that $\STS_n$ denotes the set of connected square-tiled surfaces built out of $n$ squares. Similarly, 
\begin{itemize}
\item $\RSTS_n$ will be the set of reduced square-tiled surfaces with $n$ squares.
\item $\PSTS_n$ will be the set of primitive square-tiled surfaces with $n$ squares. 
\end{itemize} 

Using the {\sf Surface Dynamics} Sage package (\cite{surfacedyn}), we obtain counts of these categories of STSs for up to 10 squares. All of the surfaces enumerated below have genus at most 5.


$$
\begin{array}{c|cccccccccc}
n &  1 & 2 & 3 & 4 & 5 & 6 & 7 & 8 & 9 & 10\\
\hline
|\STS_n| & 1 & 3 & 7 & 26 & 97 & 624 & 4163 & 34470 & 314493 & 3202839 \\
\hline
 |\RSTS_n| & 1 & 0 & 3 & 19 & 91 & 603 & 4155 & 34398 & 314468 & 3202548 \\
 \hline
 |\PSTS_n| & 1 & 0 & 3 & 13 & 91 & 500 & 4155 & 33190 & 313474 & 3176532\\
 \hline
\end{array}
$$

\vspace{5mm}

We notice that the number of square-tiled surfaces with $n$ squares grows quite rapidly with $n$, which leads to challenges when trying to experimentally determine various statistics about square-tiled surfaces. We also see that at least for square-tiled surfaces with few squares, the counts are quite close. While we do not know of a proof of this, the experimental evidence in the table above suggests that the proportion of STSs with $n$ squares that are either reduced or primitive approaches $1$ as $n$ approaches infinity. 

%
%
%

\subsection{Cylinder Decomposition}

Given a translation surface, a \textbf{cylinder} is a maximal collection of parallel closed geodesics.  It is well known that in any fixed rational direction, the non-singular straight line trajectories on a square-tiled surface are periodic. Hence, an STS decomposes as a union a finitely many cylinders in this direction. This is sometimes called a \textbf{cylinder decomposition}. In particular, every STS has a cylinder decomposition in the horizontal direction.

We will be using the possible horizontal cylinder decomposition of STSs in genus two for our arguments later. From Zorich \cite{zorich}, we know that any STS in $\calH(2)$ has either one or two cylinders in the horizontal direction. Similarly, we know that in $\calH(1,1)$, there are four possible types of cylinder decompositions. For details on the latter, refer to the Appendix of \cite{shrestha}.

Knowing the finite list of possible cylinder decompositions in each of the genus two strata allows us to parametrize STSs in them. For such a parametrization we refer the reader to Appendix  \ref{appendix:parametrization}.

\section{Notions of fake tori}
\label{sec:faketori}


In this section, we explore various ways in which a square-tiled surface can be a \emph{fake torus}: that is, like the one-square torus in some way. The simplicity of the square torus contributes to it having many nice properties like having the full $SL_2 \ZZ$ as its Veech group and the full set of primitive vectors in $\ZZ^2$ as its set of holonomy vectors. Our goal for this section is to determine the implications and non-implications between these properties through proofs, experimental evidence, and examples.

\subsection{Definitions}

In the following, let $\Hol(O)$ denote the set of holonomy vectors of $O\in \STS_n$ and $\Prim:=SL_2\ZZ \cdot (1,0)$, the set of primitive vectors in $\ZZ^2$ (those with relatively prime coordinates).
If $O$ is a square-tiled surface and $\TT$ is the one square torus with one marked point, then there is a natural map $f : O \rightarrow \TT$. We will denote $\TT$ punctured at its marked point as $\TT^*$. Then, $f^{-1} (\TT^*)$ is the square-tiled surface $O$ punctured at all of its vertices. Recall that a subgroup $H \subseteq G$ is called a characteristic subgroup of $G$ if every automorphism of $G$ fixes $H$.

\begin{definition}\
\begin{enumerate}
\item A \textbf{symmetry torus} is a reduced STS  with $\Veech(O)=\Veech(\TT)=SL_2\ZZ$. 
\item  A \textbf{holonomy torus} is an STS with $\Hol(O)=\Hol(\TT)=\Prim$.
\item  $O\in \STS_n$ is a {\bf visibility torus} if $\Prim \subseteq \Hol(O)$; otherwise, it is {\bf non-visibility}.
\item $O$ is a \textbf{characteristic STS} if $\pi_1(f^{-1} (\TT^*)) \subseteq \pi_1 (\TT^*) \cong F_2$ is a characteristic subgroup of $F_2$. Similarly,  $O$ is a \textbf{normal STS} if $\pi_1(f^{-1} (\TT^*)) \subseteq \pi_1 (\TT^*) \cong F_2$ is a normal subgroup of $F_2$.
\end{enumerate}
\end{definition}

Alternatively, if we let $\Gamma = \pi_1(f^{-1}(\TT^*)) \subset \pi_1(\TT^*) \cong F_2$, then $O$ is a symmetry torus if for every automorphism $\varphi$ of $F_2$, $\varphi(\Gamma) = g \Gamma g^{-1}$ for some $g \in F_2$. 

\begin{remark} We will work only with reduced square-tiled surfaces in this section. Being a holonomy torus or visibility torus implies that $\Prim \subseteq \Hol(O) $, and so its lattice of periods is $\ZZ^2$, and the STS is reduced. Symmetry tori are reduced by definition. Characteristic STSs of genus greater than 1 are necessarily reduced, as an easy consequence of Lemma $\ref{lem:normalholonomy}$.
\end{remark}

We can summarize the known relationships between several types of STSs defined in this section in Figure \ref{fig:implications}. Subsequently, we will give citations or proofs for the implications and non-implications. 
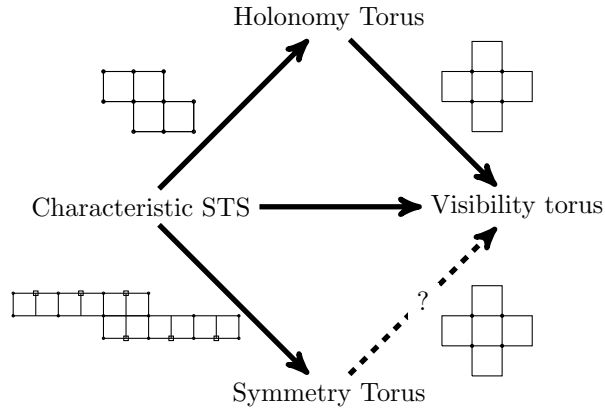
\begin{figure}[hb]
\begin{tikzpicture}[scale = 0.5]
%
\tikzset{VertexStyle/.style = {shape = rectangle}}
\Vertex{Characteristic STS}
\SOEA[unit=5](Characteristic STS){Symmetry Torus}
\NOEA[unit=5](Characteristic STS){Holonomy Torus}
\EA[unit=10](Characteristic STS){Visibility torus}
\Edge[lw = {2pt}, style={->, >=stealth'}](Characteristic STS)(Symmetry Torus)
\Edge[lw = {2pt}, style={->, >=stealth'}](Characteristic STS)(Holonomy Torus)
\Edge[lw = {2pt}, style={->, >=stealth'}](Characteristic STS)(Visibility torus)
\Edge[lw = {2pt}, style={->, >=stealth'}](Holonomy Torus)(Visibility torus)
\Edge[lw = {2pt}, label ={?}, style={dashed, ->, >=stealth'}](Symmetry Torus)(Visibility torus)
\begin{scope}[xshift=8cm,yshift=2cm,scale=.8]
\draw (1,0) -- (2,0) -- (2,3) -- (1,3) -- (1,0); 
\draw (0,1) -- (3,1) -- (3,2) -- (0,2) -- (0,1); 
\draw [fill] (1,1) circle [radius=0.05];
\draw [fill] (2,1) circle [radius=0.05];
\draw [fill] (1,2) circle [radius=0.05];
\draw [fill] (2,2) circle [radius=0.05];
\end{scope} 
\begin{scope}[xshift=-1cm,yshift=2cm,scale=.8]
\draw (0,1) -- (1,1) -- (1,0) -- (3,0) -- (3,1) -- (2,1) -- (2,2) -- (0,2) -- (0,1);
\draw (1,1) -- (1,2); 
\draw (2,0) -- (2,1); 
\draw (1,1) -- (2,1); 

\draw [fill] (0,1) circle [radius=0.05];
\draw [fill] (0,2) circle [radius=0.05];
\draw [fill] (2,1) circle [radius=0.05];
\draw [fill] (2,0) circle [radius=0.05];
\draw [fill] (2,2) circle [radius=0.05];
\draw (1,0) circle [radius= 0.05];
\draw (1,1) circle [radius= 0.05];
\draw (1,2) circle [radius= 0.05];
\draw (3,0) circle [radius= 0.05];
\draw (3,1) circle [radius= 0.05];
\end{scope} 

\begin{scope}[xshift=-1cm,yshift=-3.5cm,scale=.6]
\draw (0,0) -- (6,0) -- (6,1) -- (0,1) -- (0,0); 
\draw (-4,1) -- (2,1) -- (2,2) -- (-4, 2) -- (-4, 1); 
\draw (-3,1) -- (-3, 2);
\draw (-2,1) -- (-2, 2);
\draw (-1,1) -- (-1, 2);
\draw (0,1) -- (0, 2);
\draw (1,0) -- (1, 2);
\draw (2,0) -- (2, 1);
\draw (3,0) -- (3, 1);
\draw (4,0) -- (4, 1);
\draw (5,0) -- (5, 1);

\draw [fill] (-4,1) circle [radius=0.05];
\draw [fill] (-2,1) circle [radius=0.05];
\draw [fill] (0,1) circle [radius=0.05];
\draw [fill] (2,1) circle [radius=0.05];
\draw [fill] (4,1) circle [radius=0.05];
\draw [fill] (6,1) circle [radius=0.05];

\draw (-4,2) circle [radius=0.05];
\draw (-2,2) circle [radius=0.05];
\draw (0,2) circle [radius=0.05];
\draw (2,2) circle [radius=0.05];
\draw (0,0) circle [radius=0.05];
\draw (2,0) circle [radius=0.05];
\draw (4,0) circle [radius=0.05];
\draw (6,0) circle [radius=0.05];

\draw (-3.1,1.9) rectangle (-2.9, 2.1);
\draw (-1.1,1.9) rectangle (-0.9, 2.1);
\draw (0.9,1.9) rectangle (1.1, 2.1);
\draw (0.9,-0.1) rectangle (1.1, 0.1);
\draw (2.9,-0.1) rectangle (3.1, 0.1);
\draw (4.9,-0.1) rectangle (5.1, 0.1);

\end{scope}

\begin{scope}[xshift=8cm,yshift=-4.5cm,scale=.8]
\draw (1,0) -- (2,0) -- (2,3) -- (1,3) -- (1,0); 
\draw (0,1) -- (3,1) -- (3,2) -- (0,2) -- (0,1); 
\draw [fill] (1,1) circle [radius=0.05];
\draw [fill] (2,1) circle [radius=0.05];
\draw [fill] (1,2) circle [radius=0.05];
\draw [fill] (2,2) circle [radius=0.05];
\end{scope} 
\begin{scope}[xshift=-1cm,yshift=2cm,scale=.8]
\draw (0,1) -- (1,1) -- (1,0) -- (3,0) -- (3,1) -- (2,1) -- (2,2) -- (0,2) -- (0,1);
\draw (1,1) -- (1,2); 
\draw (2,0) -- (2,1); 
\draw (1,1) -- (2,1); 

\draw [fill] (0,1) circle [radius=0.05];
\draw [fill] (0,2) circle [radius=0.05];
\draw [fill] (2,1) circle [radius=0.05];
\draw [fill] (2,0) circle [radius=0.05];
\draw [fill] (2,2) circle [radius=0.05];
\draw (1,0) circle [radius= 0.05];
\draw (1,1) circle [radius= 0.05];
\draw (1,2) circle [radius= 0.05];
\draw (3,0) circle [radius= 0.05];
\draw (3,1) circle [radius= 0.05];
\end{scope} 

\end{tikzpicture}
\caption{Summary of relationships among reduced surfaces, as proved in this section. 
The surfaces depicted here show counterexamples for the reverse implications (redrawn in full, below). The dotted line is a conjectured implication.}
\label{fig:implications}
\end{figure}

The following propositions are alternate characterizations of holonomy tori and visibility tori which will be useful later.

\begin{proposition}\label{prop:althol} $O$ is a holonomy torus if and only if every corner of every square is a singularity.
\end{proposition}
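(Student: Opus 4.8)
The plan is to prove both directions by exploiting the single structural fact that the corners of an STS are exactly the points developing to lattice points, so that the corners lying in the interior of a saddle connection are precisely the interior lattice points of its developed segment. I would first record the elementary observation underlying everything: if a saddle connection has holonomy $v \in \ZZ^2$ and we write $v = k w$ with $w \in \Prim$ and $k \geq 1$, then its interior corners are exactly the points developing to $w, 2w, \dots, (k-1)w$. Hence a saddle connection has \emph{primitive} holonomy if and only if it contains no corner in its interior; and since a corner strictly between two singularities sits at integer distance at least $1$ from each, \emph{any} saddle connection with a corner in its interior has non-primitive holonomy of length at least $2$.

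For the implication ``every corner is a singularity $\Rightarrow \Hol(O)=\Prim$'' I would argue two containments. For $\Hol(O) \subseteq \Prim$: a saddle connection with non-primitive holonomy would, by the observation, contain an interior corner, which by hypothesis is a singularity, contradicting the definition of a saddle connection. For $\Prim \subseteq \Hol(O)$: given $v \in \Prim$ and any corner $p$ (a singularity by hypothesis), develop the straight segment of holonomy $v$ starting at $p$; since $v$ is primitive this segment meets no corner until it reaches the corner $p+v$ (again a singularity), and having no interior corner it meets no interior singularity, so it is a saddle connection realizing $v$.

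For the converse I would prove the contrapositive: if some corner $q$ is regular (cone angle $2\pi$), then $\Hol(O)\neq\Prim$, which I establish by producing a saddle connection of non-primitive holonomy. If $\Hol(O)=\emptyset$ we are done, so assume there is a singularity and use the horizontal cylinder decomposition of $O$. If $q$ lies on a horizontal saddle connection, then being regular it is interior to that saddle connection, which therefore has non-primitive holonomy. Otherwise $q$ lies in the interior of a horizontal cylinder $C$; since $C$ is tiled by unit squares and has integer height, the vertical chord through $q$ meets the bottom and top boundaries of $C$ at corners $q^-$ and $q^+$. If both are singularities, this chord is a saddle connection with $q$ in its interior, hence of non-primitive holonomy $(0,h)$ with $h\geq 2$; and if either $q^-$ or $q^+$ is regular, it lies interior to a boundary horizontal saddle connection, which is then non-primitive. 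In every case $\Hol(O)\not\subseteq \Prim$.

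The main obstacle is this converse direction, specifically guaranteeing that a regular corner forces the existence of \emph{some} saddle connection containing a corner in its interior: a regular corner need not itself lie on any saddle connection, so I route through the cylinder decomposition and, when $q$ is interior to a cylinder, transfer attention to the boundary corners $q^\pm$. I would take care to verify the integrality claims (that cylinder heights are integers and that the vertical chord hits genuine corners at integer heights) and to confirm that the ``regular boundary corner'' subcase indeed yields a horizontal saddle connection of length at least $2$.
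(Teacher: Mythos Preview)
Your proof is correct, and in fact more careful than the paper's on two points.

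For the direction ``every corner singular $\Rightarrow$ holonomy torus,'' you prove both containments $\Hol(O)\subseteq\Prim$ and $\Prim\subseteq\Hol(O)$ explicitly. The paper argues only the contrapositive ``not a holonomy torus $\Rightarrow$ there is a non-primitive saddle connection $\Rightarrow$ some corner is regular,'' which silently assumes that failure of $\Hol(O)=\Prim$ must come from a non-primitive vector rather than from a missing primitive one; your direct verification of $\Prim\subseteq\Hol(O)$ is what fills this in.

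For the converse, the paper's one-line argument is to extend a straight line through the regular corner $q$ in ``any rational slope direction'' and declare it a non-primitive saddle connection. Taken literally this is not quite right: if $q$ sits in the interior of a cylinder in that direction, the line closes up into a periodic geodesic and never meets a singularity. Your route through the horizontal cylinder decomposition --- either $q$ already lies on a horizontal saddle connection, or the vertical chord across its cylinder hands you boundary corners $q^\pm$, at least one of which lies interior to a horizontal saddle connection or both of which are singular endpoints of a vertical one --- is precisely the case analysis needed to make this rigorous. The paper's idea (find a saddle connection with an interior corner) is the same as yours; your version just does the work of exhibiting one.
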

\begin{proof} If $O$ is not a holonomy torus, then it has a non-primitive vector as a saddle connection. This saddle connection then must go through a corner of a square that is not a singularity. Conversely, if there exists a corner of a square that is not a singularity, then extending a line segment through this point in any rational slope direction yields a non-primitive vector as a saddle connection and hence the surface is not a holonomy torus.
\end{proof} 
%

\begin{proposition}\label{prop:altunobs}$O$ is a visibility torus if and only if every surface in the $SL_2 \ZZ$ orbit of $O$ has a unit length horizontal saddle connection.
\end{proposition}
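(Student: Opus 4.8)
The plan is to exploit the equivariance of the holonomy set under the $SL_2\ZZ$ action together with the defining description $\Prim = SL_2\ZZ \cdot (1,0)$. First I would record the elementary fact that for any $M \in SL_2\ZZ$ and any $O \in \STS_n$, acting by $M$ transforms every holonomy vector linearly, so that $\Hol(M \cdot O) = M \cdot \Hol(O)$; this is immediate from the description of the $SL_2\ZZ$ action as the restriction of the linear action on $\RR^2$ to the squares defining $O$, since side identifications are preserved and a saddle connection with displacement $v$ is carried to one with displacement $Mv$. I would also observe that a saddle connection of $M \cdot O$ is horizontal of unit length precisely when its holonomy vector is $\pm(1,0)$, i.e. exactly when $(1,0) \in \Hol(M \cdot O)$ (using that $\Hol$ is symmetric under negation, as a saddle connection may be traversed in either direction).

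With these in hand both implications are short. For the forward direction, suppose $O$ is a visibility torus, so $\Prim \subseteq \Hol(O)$, and fix $M \in SL_2\ZZ$. Since $M^{-1}(1,0)$ is again primitive, it lies in $\Prim \subseteq \Hol(O)$, whence $(1,0) = M \cdot M^{-1}(1,0) \in M \cdot \Hol(O) = \Hol(M \cdot O)$; thus $M\cdot O$ has a unit horizontal saddle connection. For the converse, assume every surface in the $SL_2\ZZ$ orbit of $O$ has a unit horizontal saddle connection and let $v \in \Prim$ be arbitrary. Because $\Prim = SL_2\ZZ \cdot (1,0)$, we may write $v = M(1,0)$ for some $M \in SL_2\ZZ$. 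Applying the hypothesis to $M^{-1} \cdot O$ gives $(1,0) \in \Hol(M^{-1}\cdot O) = M^{-1} \cdot \Hol(O)$, so $v = M(1,0) \in \Hol(O)$. As $v$ was arbitrary, $\Prim \subseteq \Hol(O)$ and $O$ is a visibility torus.

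I do not expect any genuine analytic difficulty here; the entire content is bookkeeping about the two group actions, and the only points requiring care are the equivariance $\Hol(M\cdot O) = M\cdot \Hol(O)$ and the fact that the primitive vectors are exactly the $SL_2\ZZ$-orbit of $(1,0)$ (both standard, and in the latter case built into our definition of $\Prim$). The one mild subtlety I would address explicitly is the orientation/sign convention: I would either fix the convention that $\Hol(O)$ contains both orientations of each saddle connection, or phrase everything up to sign, so that the phrase \emph{unit horizontal saddle connection} corresponds unambiguously to membership of $(1,0)$ in $\Hol(O)$. Once that convention is pinned down, the argument above is complete.
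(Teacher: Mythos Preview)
Your proof is correct and follows essentially the same approach as the paper: both directions rest on the equivariance $\Hol(M\cdot O)=M\cdot\Hol(O)$ together with $\Prim=SL_2\ZZ\cdot(1,0)$. The paper carries out the same computation with explicit matrix entries (writing $M=\begin{bmatrix}p&r\\q&s\end{bmatrix}$ and observing $M\cdot(s,-q)=(1,0)$), whereas you phrase it abstractly via $M^{-1}(1,0)\in\Prim$; these are the same argument, and your attention to the sign convention is a nice touch the paper leaves implicit.
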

\begin{proof}
$(\Rightarrow)$:
Let $O$ be a visibility torus, and let $O' \in SL_2\ZZ \cdot O$ such that $O' = M \cdot O$ for some $M = \begin{bmatrix}p & r \\ q & s \end{bmatrix} \in SL_2\ZZ$. Since $O$ is visibility, $(s, -q) \in \Hol(O)$. But, $\Hol(O') = M \cdot \Hol(O)$ which implies $M \cdot (s, -q) = (ps-qr,0) = (1,0) \in \Hol(O')$ and hence $O'$ has a unit length horizontal saddle connection.\\
$(\Leftarrow)$:
Let every surface in the orbit of $O$ have a unit length horizontal saddle connection. 
Let $(p,q) \in \Prim$ be arbitrary, so that $ps - rq = 1$ for some $s, q \in \NN$. Consider a matrix $M = \begin{bmatrix}p & r \\ q & s \end{bmatrix} $ in $SL_2\ZZ$. Then $O' := M^{-1} \cdot O$ is in the $SL_2\ZZ$ orbit of $O$, and by assumption, $(1,0) \in \Hol(O')$. As $O = M \cdot O'$, we have that $\Hol(O) = M \cdot \Hol(O')$ which implies $M \cdot (1,0) = (p,q) \in \Hol(O)$. \end{proof}

We remark here that one can easily replace the `horizontal' with `vertical' in Proposition \ref{prop:altunobs}.

\subsection{Implications} \label{sec:implications}

We start with a proposition, which follows from the work of G. Schmith\"usen \cite{schmithusen}: 
\begin{proposition}[Schmith\"usen]
\label{prop:charsym} $O$ is a characteristic STS if and only if  $O$ is a normal symmetry torus.
\end{proposition}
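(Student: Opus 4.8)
The plan is to abandon the geometry entirely and reduce the statement to a one-line manipulation of the action of $\mathrm{Aut}(F_2)$ on the subgroup $\Gamma = \pi_1(f^{-1}(\TT^*)) \subseteq \pi_1(\TT^*) \cong F_2$. Translating the three notions: $O$ is a \emph{characteristic} STS iff $\varphi(\Gamma) = \Gamma$ for every $\varphi \in \mathrm{Aut}(F_2)$; $O$ is a \emph{normal} STS iff $g\Gamma g^{-1} = \Gamma$ for every $g \in F_2$ (equivalently, every inner automorphism fixes $\Gamma$); and, by the characterization recorded just after the definitions, which is the substance of Schmith\"usen's work \cite{schmithusen}, $O$ is a \emph{symmetry torus} iff for every $\varphi \in \mathrm{Aut}(F_2)$ there exists $g \in F_2$ with $\varphi(\Gamma) = g\Gamma g^{-1}$. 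With these three reformulations in hand the proposition is purely formal, so the real content lives in justifying the last one.

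For the forward implication, I would suppose $\Gamma$ is characteristic, so that $\varphi(\Gamma) = \Gamma$ for all $\varphi \in \mathrm{Aut}(F_2)$. Restricting to the inner automorphisms $\varphi_g : h \mapsto g h g^{-1}$ immediately yields $g\Gamma g^{-1} = \Gamma$ for all $g \in F_2$, so $\Gamma$ is normal. Moreover $\varphi(\Gamma) = \Gamma$ is in particular conjugate to $\Gamma$ (take $g = e$), which is exactly the symmetry-torus condition; since we are working throughout this section with reduced STSs (and, by the Remark, characteristic STSs of genus $>1$ are automatically reduced via Lemma \ref{lem:normalholonomy}), being a symmetry torus then amounts to the equality $\Veech(O) = SL_2\ZZ$, which the characterization supplies. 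Hence $O$ is a normal symmetry torus.

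For the converse, I would assume $O$ is a normal symmetry torus and fix an arbitrary $\varphi \in \mathrm{Aut}(F_2)$. Because $O$ is a symmetry torus, $\varphi(\Gamma) = g\Gamma g^{-1}$ for some $g \in F_2$; because $\Gamma$ is normal, $g\Gamma g^{-1} = \Gamma$. Combining these gives $\varphi(\Gamma) = \Gamma$, and as $\varphi$ was arbitrary, $\Gamma$ is characteristic. So here normality is precisely the upgrade that turns ``conjugate to $\Gamma$'' into ``equal to $\Gamma$,'' which is the whole point of the equivalence.

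The genuinely substantive step is not this algebra but the dictionary it rests on, namely the identification of $\Veech(O)$ with the stabilizer of the conjugacy class $[\Gamma]$ under the action of $\mathrm{Out}^+(F_2) \cong SL_2\ZZ$ on conjugacy classes of finite-index subgroups of $F_2$; this is exactly the input from Schmith\"usen's description, and it is the place to be careful. In particular one must reconcile the orientation-preserving convention ($\mathrm{Out}^+(F_2)$ versus $\mathrm{Out}(F_2) \cong GL_2\ZZ$) so that the phrase ``for every automorphism $\varphi$'' in the symmetry-torus characterization genuinely ranges over the same $\mathrm{Aut}(F_2)$ that appears in the definition of a characteristic subgroup. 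Once that identification is granted, both directions close as above.
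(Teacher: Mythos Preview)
The paper does not actually supply a proof of this proposition: it simply states that the result ``follows from the work of G.\ Schmith\"usen \cite{schmithusen}'' and moves on. Your write-up is therefore not competing with a proof in the paper but rather filling in the derivation the authors omit, and as such it is correct. Once one accepts the alternative characterization of a symmetry torus recorded immediately after the definitions---that $O$ is a symmetry torus iff every $\varphi\in\mathrm{Aut}(F_2)$ sends $\Gamma$ to a conjugate of $\Gamma$---the equivalence with ``characteristic'' under the additional hypothesis ``normal'' is exactly the two-line algebra you give: characteristic restricts to inner automorphisms to give normal and trivially gives the conjugacy condition; conversely, normality collapses ``conjugate to $\Gamma$'' to ``equal to $\Gamma$.''

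You are also right that the substantive content is entirely in that dictionary, and your caveat about $\mathrm{Out}^+(F_2)\cong SL_2\ZZ$ versus $\mathrm{Out}(F_2)\cong GL_2\ZZ$ is the correct place to be careful. Schmith\"usen's theorem literally identifies $\Veech(O)$ with the image in $SL_2\ZZ$ of the stabilizer of the conjugacy class $[\Gamma]$ under $\mathrm{Aut}^+(F_2)$, so a priori ``symmetry torus'' only says that every \emph{orientation-preserving} automorphism sends $\Gamma$ to a conjugate. The paper's alternative characterization is phrased for \emph{every} automorphism of $F_2$, and it is this stronger statement that makes your converse close for the full $\mathrm{Aut}(F_2)$ rather than just $\mathrm{Aut}^+(F_2)$. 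You flag this gap but do not close it; to be complete you would either need to cite the part of Schmith\"usen's work that handles the orientation-reversing generator, or observe directly that one orientation-reversing automorphism (e.g.\ $a\mapsto a^{-1},\, b\mapsto b$, corresponding to a reflection of the torus) preserves the conjugacy class of $\Gamma$ for any origami, which together with the $\mathrm{Aut}^+$ statement yields the full $\mathrm{Aut}$ version. That said, this is exactly the level of detail the paper itself suppresses by citing \cite{schmithusen}.
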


To work towards other implications, we will start with a lemma relating holonomy tori with normal STSs.

\begin{lemma} 
\label{lem:normalholonomy}
If $O$ is a normal STS, then $O$ is a holonomy torus. 
\end{lemma}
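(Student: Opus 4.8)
The plan is to reduce the statement to the corner criterion of Proposition~\ref{prop:althol}, which says that $O$ is a holonomy torus if and only if every corner of every square is a singularity. Thus it suffices to show that normality forces every corner of $O$ to be a genuine cone point, i.e.\ to have cone angle strictly greater than $2\pi$. First I would translate normality into a statement about the covering $f\colon O\to\TT$. Writing $\pi_1(\TT^*)\cong F_2=\langle a,b\rangle$ with $a,b$ the horizontal and vertical loops, the surface $O$ is recorded by its monodromy $\rho\colon F_2\to S_n$, $\rho(a)=\sigma$, $\rho(b)=\tau$, and $\Gamma=\pi_1(f^{-1}(\TT^*))$ is the stabilizer of one square under the action of $H:=\langle\sigma,\tau\rangle$ on the $n$ squares. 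Because this action is faithful, normality of $\Gamma$ in $F_2$ is equivalent to $H$ acting simply transitively (regularly); equivalently, $f^{-1}(\TT^*)\to\TT^*$ is a regular (Galois) cover whose deck group $G=F_2/\Gamma\cong H$ acts by translation automorphisms of $O$ and transitively on each fiber of $f$.

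The heart of the argument is then a homogeneity observation. The corners of the squares are exactly the fiber $f^{-1}(\text{marked point})$, and the cone angle at a corner is $2\pi$ times the length of the corresponding cycle of the commutator $[\sigma,\tau]=\rho(aba^{-1}b^{-1})$, the monodromy around the puncture. In the regular representation of $H$, every element is a product of disjoint cycles all of the same length, namely its order: multiplication by $[\sigma,\tau]$ partitions $H$ into cosets of the cyclic group $\langle[\sigma,\tau]\rangle$, each of size $k:=\mathrm{ord}([\sigma,\tau])$. Conceptually, this is just the statement that $G$ permutes the corners transitively while preserving cone angles, so all corners of $O$ share one common cone angle $2\pi k$.

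To finish I would split on $k$. If $k\geq 2$, every corner has cone angle $2\pi k>2\pi$ and is therefore a singularity, so Proposition~\ref{prop:althol} immediately gives $\Hol(O)=\Prim$, that is, $O$ is a holonomy torus. The case $k=1$ is the only delicate point: here $[\sigma,\tau]=e$, so $\sigma$ and $\tau$ commute, the cover is unramified, and $O$ is a translation torus ($g=1$) with no genuine cone points; for a reduced STS this forces $O=\TT$, which is a holonomy torus, and since the applications of this lemma (such as the Remark deducing reducedness of characteristic STSs) concern genus $>1$, this edge case is precisely where one invokes the reduced hypothesis or simply restricts to $g>1$. I expect the main obstacle to be making the monodromy-to-cone-angle dictionary precise—identifying corners with cycles of $[\sigma,\tau]$ and justifying that the regular representation splits every element into equal-length cycles—whereas the appeal to Proposition~\ref{prop:althol} and the genus-one bookkeeping are routine.
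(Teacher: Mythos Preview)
Your argument is correct and reaches the same conclusion via Proposition~\ref{prop:althol}, but the mechanism is more algebraic than the paper's. The paper argues directly from the closed-loop criterion for normality: a word $w\in F_2$ represents a closed loop based at one square if and only if it does so at every square. Choosing a square $S$ whose top-right corner is a genuine singularity, the commutator word $RUR^{-1}U^{-1}$ (one full turn around that corner) fails to close up at $S$, hence fails to close up at \emph{every} square, so every square has a singularity in its top-right corner. Your route instead identifies the cover as regular, passes to the deck group, and invokes the fact that in the left-regular representation every element decomposes into cycles of a single common length; applied to $[\sigma,\tau]$ this forces all corner cone angles to coincide. The two arguments are really the same homogeneity principle viewed from different angles: the paper phrases it as ``closed at one basepoint iff closed at all,'' you phrase it as ``equal cycle lengths in the regular representation.'' Your version has the advantage of yielding the exact common cone angle $2\pi\cdot\mathrm{ord}([\sigma,\tau])$ and making the genus-one edge case ($[\sigma,\tau]=e$) completely transparent; the paper's version is quicker to state and avoids setting up the monodromy dictionary. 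One small imprecision: $\Gamma$ is the stabilizer of a square under the $F_2$-action, not the $H$-action, though your conclusion that normality of $\Gamma$ is equivalent to regularity of the $H$-action is correct.
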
 

\begin{proof}
We may assume that $O$ has a cone point; otherwise, if it is reduced, then it is the standard torus.
\InsertBoxR{2}{\begin{minipage}{0.25\linewidth}\centering
\begin{tikzpicture}[scale=0.8]
\draw (0,0) -- (4,0) -- (4,1) -- (0,1) -- (0,0); 
\draw (0,0) -- (2,0) -- (2,2) -- (0,2) -- (0,0); 
\draw (1,0) -- (1,2); 
\draw (3,0) -- (3,1); 
\node at (0.5, -0.3) {$1$};
\node at (1.5, -0.3) {$2$};
\node at (0.5, 2.3) {$2$};
\node at (1.5, 2.3) {$1$};
\draw [fill] (0,0) circle [radius=0.05];
\draw [fill] (2,0) circle [radius=0.05];
\draw [fill] (2,1) circle [radius=0.05];
\draw [fill] (0,1) circle [radius=0.05];
\draw [fill] (1,2) circle [radius=0.05];
\draw[->] (0.5, 0.5) -- (1.5, 0.5);
\draw[->] (1.5, 0.5) -- (1.5, 1.5);
\draw (1.5, 1.5) -- (1.5, 2);
\draw[->] (0.5, 0) -- (0.5, 0.5);
\end{tikzpicture}  
\end{minipage}}[2]
If $O$ is a normal STS, then let $O^*$ be the STS punctured at every vertex. Fixing a basepoint in $O^*$ in a given square $S$ of $O^*$, elements in $\pi_1(O^*)$ are paths of squares from square $S$ to itself. These paths are represented as elements of $F_2 \cong \langle U, R\rangle$, where $U$ is an up step, and $R$ is a right step. For example, the path in the surface below starting in the bottom left square is represented by  $RU^2$.

Since $\pi_1(O^*)$ is normal in $F_2$, it is invariant under conjugation. Therefore a word $w$ representing a path of up/down/left/right moves forms a closed loop based at $S$ if and only if it forms a closed loop based at every other square. 

If $O$ has genus $\geq 2$, then we can choose $S$ to be a square that has a singularity in its top right corner. Then, the word $RUR^{-1}U^{-1}$ is a path of angle $2\pi$ around the singularity, counterclockwise, and begins at $S$ and ends at a square that is not $S$. This implies that $RUR^{-1} U^{-1}$ is not a closed loop based at any other square, so every square has a singularity in its top right corner. Therefore, $O$ is a holonomy torus. 
\end{proof}

It follows immediately that if $O$ is a characteristic STS, then $O$ is a holonomy torus.
Since holonomy tori are a subset of visibility tori by definition, it is also immediate  that characteristic STSs are a subset of visibility tori.

\subsection{Non-implications}\label{sec:nonimplications} We now present some examples showing non-implications between the different notions of fake tori.
\begin{wrapfigure}{R}{0.2\textwidth}
\begin{center}
\vspace{-20pt}
\begin{tikzpicture}[scale=0.6]
\draw (1,0) -- (2,0) -- (2,3) -- (1,3) -- (1,0); 
\draw (0,1) -- (3,1) -- (3,2) -- (0,2) -- (0,1); 
\draw [fill] (1,1) circle [radius=0.05];
\draw [fill] (2,1) circle [radius=0.05];
\draw [fill] (1,2) circle [radius=0.05];
\draw [fill] (2,2) circle [radius=0.05];
\end{tikzpicture} 
\vspace{10pt}

\begin{tikzpicture}[scale=0.6] 
\draw (0,1) -- (1,1) -- (1,0) -- (3,0) -- (3,1) -- (2,1) -- (2,2) -- (0,2) -- (0,1);
\draw (1,1) -- (1,2); 
\draw (2,0) -- (2,1); 
\draw (1,1) -- (2,1); 

\draw [fill] (0,1) circle [radius=0.05];
\draw [fill] (0,2) circle [radius=0.05];
\draw [fill] (2,1) circle [radius=0.05];
\draw [fill] (2,0) circle [radius=0.05];
\draw [fill] (2,2) circle [radius=0.05];
\draw (1,0) circle [radius= 0.05];
\draw (1,1) circle [radius= 0.05];
\draw (1,2) circle [radius= 0.05];
\draw (3,0) circle [radius= 0.05];
\draw (3,1) circle [radius= 0.05];
\end{tikzpicture} 
\end{center}
\vspace{-25pt}
\end{wrapfigure}

\textbf{Example}. The Swiss Cross shown on the right is a surface in $\calH(2)$ that is visibility, but is not characteristic, nor a symmetry torus, nor a holonomy torus.  The fact that it is visibility can be proven using Proposition \ref{prop:altunobs}. 

\textbf{Example}. The four-square STS on the right is a holonomy torus that is not a symmetry torus because the element $\begin{bmatrix} 1 & 1 \\ 0 & 1 \end{bmatrix}$ does not belong to its Veech group. In fact, its Veech group, computed via the {\sf Surface Dynamics} package \cite{surfacedyn}, has index 6 in $SL_2\ZZ$. 
Thus it is also not characteristic.

\begin{example}

The two surfaces in Figure \ref{fig:2sym} are examples of symmetry tori that are not holonomy tori, and therefore not characteristic. 
The surfaces have 12 and 16 squares, respectively, but both have genus four and belong to the same stratum, $\mathcal H(2,2,2)$. The surface with 12 squares was first discovered by Forni and Matheus \cite{fornimatheus}, and is often called the \emph{Ornithorynque} for its unusual properties. The \emph{Ornithorynque}  is an example of a general construction of square-tiled cyclic covers, detailed in \cite{fornimatheuszorich}.
These two examples are both visibility, meaning that all relatively prime vectors appear in the holonomy set, but they are not holonomy tori because $(2,2)$ is seen to be a holonomy vector in each case.  We have verified experimentally that these are the only examples of symmetry but not holonomy tori with $\le 16$ squares. It appears that the one with 16 squares is the first known example of a symmetry torus where the lengths of the cylinders are not all equal.

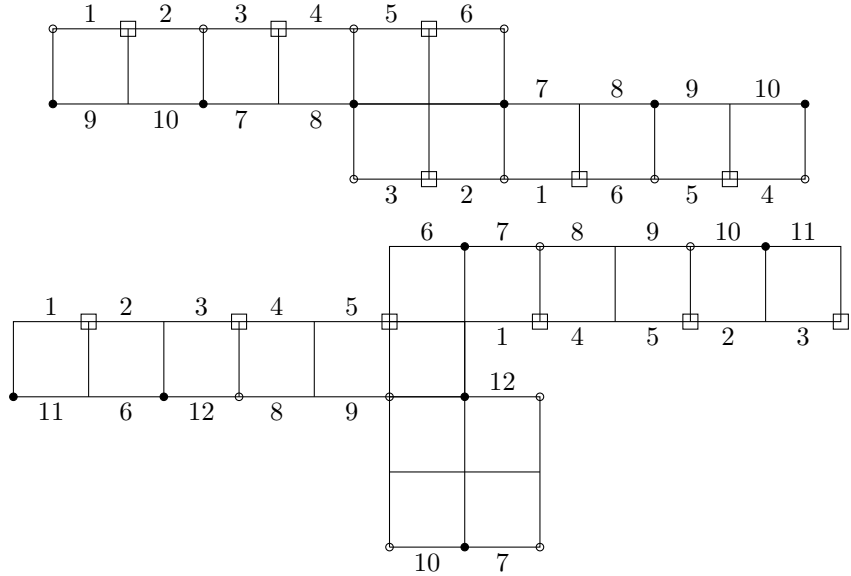
\begin{figure}[ht]
\begin{tikzpicture}
\draw (0,0) -- (6,0) -- (6,1) -- (0,1) -- (0,0); 
\draw (-4,1) -- (2,1) -- (2,2) -- (-4, 2) -- (-4, 1); 
\draw (-3,1) -- (-3, 2);
\draw (-2,1) -- (-2, 2);
\draw (-1,1) -- (-1, 2);
\draw (0,1) -- (0, 2);
\draw (1,0) -- (1, 2);
\draw (2,0) -- (2, 1);
\draw (3,0) -- (3, 1);
\draw (4,0) -- (4, 1);
\draw (5,0) -- (5, 1);
\node at (-3.5, 2.2) {$1$};
\node at (-2.5, 2.2) {$2$};
\node at (-1.5, 2.2) {$3$};
\node at (-0.5, 2.2) {$4$};
\node at (0.5, 2.2) {$5$};
\node at (1.5, 2.2) {$6$};
\node at (2.5, 1.2) {$7$};
\node at (3.5, 1.2) {$8$};
\node at (4.5, 1.2) {$9$};
\node at (5.5, 1.2) {$10$};

\node at (-3.5, 0.8) {$9$};
\node at (-2.5, 0.8) {$10$};
\node at (-1.5, 0.8) {$7$};
\node at (-0.5, 0.8) {$8$};
\node at (0.5, -0.2) {$3$};
\node at (1.5, -0.2) {$2$};
\node at (2.5, -0.2) {$1$};
\node at (3.5, -0.2) {$6$};
\node at (4.5, -0.2) {$5$};
\node at (5.5, -0.2) {$4$};

\draw [fill] (-4,1) circle [radius=0.05];
\draw [fill] (-2,1) circle [radius=0.05];
\draw [fill] (0,1) circle [radius=0.05];
\draw [fill] (2,1) circle [radius=0.05];
\draw [fill] (4,1) circle [radius=0.05];
\draw [fill] (6,1) circle [radius=0.05];

\draw (-4,2) circle [radius=0.05];
\draw (-2,2) circle [radius=0.05];
\draw (0,2) circle [radius=0.05];
\draw (2,2) circle [radius=0.05];
\draw (0,0) circle [radius=0.05];
\draw (2,0) circle [radius=0.05];
\draw (4,0) circle [radius=0.05];
\draw (6,0) circle [radius=0.05];

\draw (-3.1,1.9) rectangle (-2.9, 2.1);
\draw (-1.1,1.9) rectangle (-0.9, 2.1);
\draw (0.9,1.9) rectangle (1.1, 2.1);
\draw (0.9,-0.1) rectangle (1.1, 0.1);
\draw (2.9,-0.1) rectangle (3.1, 0.1);
\draw (4.9,-0.1) rectangle (5.1, 0.1);
\end{tikzpicture}


\begin{tikzpicture}
\draw (-5,0) -- (1,0) -- (1,1) -- (-5,1) -- (-5, 0); 
\draw (0,1) -- (6,1) -- (6,2) -- (0,2) -- (0,1); 
\draw (0,-2) -- (2,-2) -- (2, 0) -- (0,0) -- (0,-2); 
\draw (1,-2) -- (1,2); 
\draw (-4,0) -- (-4, 1);
\draw (-3,0) -- (-3, 1);
\draw (-2,0) -- (-2, 1);
\draw (-1,0) -- (-1, 1);
\draw (0,0) -- (0, 1);
\draw (2,1) -- (2,2);
\draw (3,1) -- (3,2);
\draw (4,1) -- (4,2);
\draw (5,1) -- (5,2);
\draw (0,-1) -- (2,-1); 

\node at (-4.5, 1.2) {$1$};
\node at (-3.5, 1.2) {$2$};
\node at (-2.5, 1.2) {$3$};
\node at (-1.5, 1.2) {$4$};
\node at (-0.5, 1.2) {$5$};
\node at (0.5, 2.2) {$6$};
\node at (1.5, 2.2) {$7$};
\node at (2.5, 2.2) {$8$};
\node at (3.5, 2.2) {$9$};
\node at (4.5, 2.2) {$10$};
\node at (5.5, 2.2) {$11$};
\node at (1.5, 0.2) {$12$};

\node at (-4.5, -0.2) {$11$};
\node at (-3.5, -0.2) {$6$};
\node at (-2.5, -0.2) {$12$};
\node at (-1.5, -0.2) {$8$};
\node at (-0.5, -0.2) {$9$};
\node at (1.5, 0.8) {$1$};
\node at (2.5, 0.8) {$4$};
\node at (3.5, 0.8) {$5$};
\node at (4.5, 0.8) {$2$};
\node at (5.5, 0.8) {$3$};
\node at (0.5, -2.2) {$10$};
\node at (1.5, -2.2) {$7$};

\draw [fill] (1,2) circle [radius=0.05];
\draw [fill] (5,2) circle [radius=0.05];
\draw [fill] (1,0) circle [radius=0.05];
\draw [fill] (1,-2) circle [radius=0.05];
\draw [fill] (-5,0) circle [radius=0.05];
\draw [fill] (-3,0) circle [radius=0.05];

\draw (2,2) circle [radius=0.05];
\draw (4,2) circle [radius=0.05];
\draw (-2,0) circle [radius=0.05];
\draw (0,0) circle [radius=0.05];
\draw (2,0) circle [radius=0.05];
\draw (0,-2) circle [radius=0.05];
\draw (2,-2) circle [radius=0.05];

\draw (-4.1,0.9) rectangle (-3.9, 1.1);
\draw (-2.1,0.9) rectangle (-1.9, 1.1);
\draw (-0.1,0.9) rectangle (0.1, 1.1);
\draw (1.9,0.9) rectangle (2.1, 1.1);
\draw (3.9,0.9) rectangle (4.1, 1.1);
\draw (5.9,0.9) rectangle (6.1, 1.1);
\end{tikzpicture} 
\caption{Two symmetry tori that are not holonomy tori. The one on top is Forni-Matheus's \emph{Ornithorynque}.}
\label{fig:2sym}
\end{figure} 
\end{example}

\textbf{Missing Implication.} We note that to prove the missing implication (symmetry torus implies visibility torus) from Figure \ref{fig:implications}, it suffices to show that for any $O \in \RSTS$ there exists $v \in \Hol(O) \cap \Prim$. For then, as symmetry tori are reduced by definition, and have $SL_2\ZZ$ Veech group, $\Prim = SL_2\ZZ \cdot v \subset \Hol(O)$.

\section{Counting fake tori}\label{sec:finiteness}

In this section we will make precise the intuition that not many surfaces should have the simplicity of the torus by proving that there are, for some definitions of fake torus from the previous section, only finitely many fake tori in each stratum of translation surfaces.

We approach this problem first by examining some computational evidence. Using the {\sf Surface Dynamics} Sage library (\cite{surfacedyn}), we can report a complete inventory of all reduced square-tiled surfaces with up to 10 squares and genus at most 5. 

$$
\begin{array}{ccccccc}
n & |\RSTS_n| & \# \text{Symm} & \# \text{Hol} & \# \text{Non-visibility} &\# \text{Hol}/ |\RSTS_n| & \# \text{Non-visibility}/|\RSTS_n|\\
\hline
2& 0 & 0 & 0 & 0& -- & --\\
3&3&0&3& 0 & 1 & 0\\
4&19&0&10& 0 & 0.526 & 0\\
5&91&0&40& 0 & 0.440 & 0\\
6&603&0&254& 36 & 0.421 & 0.060 \\
7&4155&0&1620& 90 & 0.390 & 0.022\\
8&34398&1&13364& 348 & 0.389 & 0.010\\
9&314468&0&119892& 693 & 0.381 &  0.0022\\
10&3202548&0&1212334&7491 & 0.379 & 0.0023
\end{array}
$$

The library also contains 3717 $SL_2\ZZ$ orbits with more than 10 squares, representing many millions of distinct reduced surfaces.
For example, restricting to  genus two, the library is complete up to 55 squares. From the data gathered above, it appears as though symmetry tori are very rare among square-tiled surfaces, but the proportion of $n$-square STSs that are holonomy tori or visibility tori may not tend toward zero as $n$ goes to infinity. 
In all of the examples computed in the database, there are a total of 3 symmetry tori: the two shown in Figure \ref{fig:2sym} and the EW surface in $\STS_8$, shown in Figure \ref{fig:pig}.

We will show the existence of an upper bound, in terms of the stratum, on the number of squares a visibility torus can have in a given stratum. These results on the number of squares allow us to show that given any stratum, there are finitely many visibility tori and holonomy tori. Thus, two seemingly opposing behaviors occur: while the table suggests that the proportion of $n$-square STSs that are holonomy tori or visibility tori is large, there are actually only finitely many holonomy or visibility tori in each stratum. 

The table also suggests that symmetry tori are rare. We will confirm this in genus two by showing that there are no symmetry tori in genus two.

\subsection{Counting problems related to visibility properties}

We devote this section to proving Theorem \ref{faketori}.

We start by proving that, given a stratum, there exists a minimum number of squares that a STS in the stratum can have, and that for every $n$ above this minimum, there exists a one-cylinder STS with $n$ squares in the given stratum. The following proposition is also proven algebraically by Zmiaikou in \cite{zmiaikouthesis} (Theorem 3.2), but we include a constructive proof here. A related construction of one-cylinder translation surfaces in each stratum that are not necessarily square-tiled is given by Zorich in \cite{zorichjenkin}. 

\begin{proposition} 
\label{prop:1cyl} 
$\STS_n \cap \calH(\alpha_1, \dots, \alpha_s) \neq \emptyset$ if and only if $n \geq 2g-2+s$.   
\end{proposition}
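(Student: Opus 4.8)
The statement splits into a lower bound (necessity) and an existence claim (sufficiency), and I would attack the two by completely different means.

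For necessity I would run an Euler-characteristic count. Regarding an $n$-square surface as a CW-decomposition, there are $F = n$ square faces, and since the $4n$ edge-sides are glued in parallel pairs there are $E = 2n$ edges; Euler's formula $V - E + F = 2-2g$ then forces the number of vertices to be $V = n - (2g-2)$. Every cone point is a vertex, so $V \geq s$, and combining the two gives $n - (2g-2) \geq s$, i.e. $n \geq 2g-2+s$. I would also record that equality $n = 2g-2+s$ is exactly the case $V = s$, when \emph{every} vertex is a singularity and there are no regular vertices; this is the tightest ``no slack'' case and explains why the examples at the bottom of the range must be built economically. The same bookkeeping can be phrased at the level of corners: the $4n$ right-angle corners distribute as $4(\alpha_i+1)$ corners at the $i$-th cone point and $4$ at each regular vertex, which yields the same inequality.

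For sufficiency I would construct, for every $n \geq 2g-2+s$, an explicit connected one-cylinder STS in the stratum. In the permutation-pair model I take the horizontal permutation to be the full $n$-cycle $\sigma = (1\,2\,\cdots\,n)$; then the surface is a single horizontal cylinder by construction, and since $\sigma$ is already transitive it is automatically connected, so the only data left to choose is the vertical permutation $\tau$. Using the standard dictionary (the vertices correspond to the cycles of the commutator $[\sigma,\tau]$, a cycle of length $\ell$ giving a vertex of cone angle $2\pi\ell$, i.e. a zero of order $\ell-1$), realizing $\calH(\alpha_1,\dots,\alpha_s)$ with exactly $n$ squares becomes the combinatorial problem of choosing $\tau \in S_n$ so that $[\sigma,\tau]$ has cycle type $(\alpha_1+1,\dots,\alpha_s+1,1^{\,r})$ with $r = n-(2g-2+s)$ fixed points (the regular vertices). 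I would do this in two steps: first exhibit a minimal representative ($r=0$) realizing $(\alpha_1+1,\dots,\alpha_s+1)$ with $\sigma$ a cycle of length $2g-2+s$, built geometrically as one cylinder whose top-to-bottom gluing is read off from a staircase pattern engineered so that the corners wind $\alpha_i+1$ times around the $i$-th prescribed cone point; then lengthen the cylinder by appending columns glued locally top-to-bottom, each introducing exactly one extra regular vertex (a new fixed point of the commutator) while leaving the other cycles untouched.

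The main obstacle lives entirely in the sufficiency half, and specifically in the minimal representative: producing, uniformly in an arbitrary profile $(\alpha_1,\dots,\alpha_s)$, a single cylinder that realizes all $s$ zeros at once, and verifying it lands in the right stratum. The extension step is both easy to believe and easy to check --- for $\calH(2)$, for instance, one computes directly that $[(1\,2\,\cdots\,n),(1\,2)]$ is a single $3$-cycle together with $n-3$ fixed points for every $n \geq 3$, staying in $\calH(2)$ throughout --- but the simultaneous realization of all zeros, together with the singular-vertex verification via the commutator cycle type (equivalently, by tracking how square-corners are identified as one circles each vertex), is the step requiring genuine care. I would organize the argument so that this singular computation is performed once on the minimal model and then automatically preserved under the manifestly regular column insertions, so that no commutator has to be recomputed from scratch as $n$ grows.
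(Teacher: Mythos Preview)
Your necessity argument via Euler characteristic is correct and equivalent to the paper's angle-count (total corner angle $2\pi n$ must be at least the total cone angle $2\pi\sum(\alpha_i+1)$). Your sufficiency strategy---build a minimal one-cylinder surface, then lengthen by spacer columns glued top-to-bottom---is exactly the paper's approach, and your extension step matches the paper's use of identity-permutation spacers verbatim.

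The gap is that you name the minimal construction as ``the main obstacle \ldots\ requiring genuine care'' but do not carry it out; that construction \emph{is} the content of the paper's proof. What the paper does: for a single even zero $\alpha=2k$, take $2k+1$ squares in a row with vertical permutation $(1)(2\,3)(4\,5)\cdots(2k\ 2k{+}1)$, which one checks has a single vertex of angle $2\pi(2k+1)$. For a pair of odd zeros $2p-1,\,2q-1$, take $2p+2q$ squares with vertical permutation
\[
(1\,2)(3\,4)\cdots(2p{-}3\ 2p{-}2)\,(2p{-}1\ 2p{+}1)(2p)\,(2p{+}2\ 2p{+}3)\cdots(2p{+}2q{-}2\ 2p{+}2q{-}1)(2p{+}2q),
\]
yielding exactly two vertices of the prescribed angles. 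The organizing observation you are missing is that since $\sum\alpha_i=2g-2$ is even, the odd $\alpha_i$'s can always be grouped in pairs; the general minimal surface is then obtained by concatenating these even-singleton and odd-pair blocks along a single row. So your commutator reformulation is fine and your outline is sound, but the ``staircase pattern engineered so that the corners wind $\alpha_i+1$ times'' is precisely the thing to be produced and verified, not asserted---and the even/odd parity split is the idea that makes a uniform construction possible.
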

\begin{proof}
$(\Rightarrow):$ Let $O \in \STS_n \cap \calH(\alpha_1, \dots, \alpha_s)$. Then, the total angle of the cone points is $2 \pi \sum_{i=1}^s (\alpha_i+1)$. Moreover, the total angle contributed by the vertices, both singular and non-singular, is $2\pi n$. So, $2 \pi n \geq 2 \pi \sum_{i=1}^s (\alpha_i+1) \implies n \geq 2g-2 + s$.

$(\Leftarrow):$
We will start by showing that given any positive integer $k$, we can construct a $1$-cylinder surface with $2k+1$ squares, belonging to $\calH(2k)$. Given any positive integers $p$ and $q$, we can also construct a surface with $2p + 2q$ squares, belonging to $\calH(2p-1,2q-1)$. These will then be the building blocks that we use to construct $1$-cylinder surfaces in other strata. 

Let us first construct a $1$-cylinder surface in $\calH(2k)$ with $2k+1$ squares. If we label the squares in the cylinder from left to right as $1,2,\ldots,2k+1$, then we can represent the vertical side identifications by a permutation denoting which squares are glued to each other, going upward along their vertical cylinders. The vertical permutation that we use to construct our desired surface in $\calH(2k)$ is $\tau = (1) (2,3) (4,5) \ldots (2k, 2k+1)$, as seen below: 

\begin{center}
\begin{tikzpicture}
\draw (0,0) -- (5,0);
\draw (0,1) -- (5,1);
\draw (6,0) -- (8,0);
\draw (6,1) -- (8,1);
\draw (0,0) -- (0,1);
\draw (1,0) -- (1,1);
\draw (2,0) -- (2,1);
\draw (3,0) -- (3,1);
\draw (4,0) -- (4,1);
\draw (5,0) -- (5,1);
\draw (6,0) -- (6,1);
\draw (7,0) -- (7,1);
\draw (8,0) -- (8,1);
\node at (0.5, 1.2) {$1$};
\node at (1.5, 1.2) {$2$};
\node at (2.5, 1.2) {$3$};
\node at (3.5, 1.2) {$4$};
\node at (4.5, 1.2) {$5$};
\node at (6.5, 1.2) {$2k$};
\node at (7.5, 1.2) {$2k+1$};

\node at (0.5, -0.2) {$1$};
\node at (1.5, -0.2) {$3$};
\node at (2.5, -0.2) {$2$};
\node at (3.5, -0.2) {$5$};
\node at (4.5, -0.2) {$4$};
\node at (6.5, -0.2) {$2k+1$};
\node at (7.5, -0.2) {$2k$};

\node at (5.5, 0.5) {$\cdots$};
\draw [fill] (0,0) circle [radius=0.05];
\draw [fill] (1,0) circle [radius=0.05];
\draw [fill] (2,0) circle [radius=0.05];
\draw [fill] (3,0) circle [radius=0.05];
\draw [fill] (4,0) circle [radius=0.05];
\draw [fill] (5,0) circle [radius=0.05];
\draw [fill] (6,0) circle [radius=0.05];
\draw [fill] (7,0) circle [radius=0.05];
\draw [fill] (8,0) circle [radius=0.05];
\draw [fill] (0,1) circle [radius=0.05];
\draw [fill] (1,1) circle [radius=0.05];
\draw [fill] (2,1) circle [radius=0.05];
\draw [fill] (3,1) circle [radius=0.05];
\draw [fill] (4,1) circle [radius=0.05];
\draw [fill] (5,1) circle [radius=0.05];
\draw [fill] (6,1) circle [radius=0.05];
\draw [fill] (7,1) circle [radius=0.05];
\draw [fill] (8,1) circle [radius=0.05];
\end{tikzpicture}
\end{center}

Now, let us construct a surface in $\calH(2p-1,2q-1)$ with $2p+2q$ squares. We will construct this by concatenating from left to right: a block of $2p-2$ squares with the permutation $(1,2)(3,4)\cdots (2p-3,2p-2)$, a block of three squares with the permutation $(2p-1, 2p+1)(2p)$, another $2q-2$ squares with the permutation $(2p+2, 2p+3)(2p+3, 2p+4)\cdots (2p+2q-2, 2p+2q-1)$, and then one block with the permutation $(2p+2q)$. 

The picture of this is as follows (where $t := p+q$), and by following the side identifications around as before, we get the following vertex identifications: 

\begin{center}
\begin{tikzpicture}

\draw [fill=lightgray] (0,0) rectangle (2,1);
\draw [fill=lightgray] (3,0) rectangle (5,1);
\draw [fill=lightgray] (8,0) rectangle (10,1);
\draw [fill=lightgray] (11,0) rectangle (13,1);

\draw (0,0) -- (2,0);
\draw (0,1) -- (2,1);
\draw (0,0) -- (0,1);
\draw (1,0) -- (1,1);
\draw (2,0) -- (2,1);

\node at (2.5, 0.5) {$\cdots$};

\draw (3,0) -- (10,0);
\draw (3,1) -- (10,1);
\draw (3,0) -- (3,1);
\draw (4,0) -- (4,1);
\draw (5,0) -- (5,1);
\draw (6,0) -- (6,1);
\draw (7,0) -- (7,1);
\draw (8,0) -- (8,1);
\draw (9,0) -- (9,1);
\draw (10,0) -- (10,1);

\node at (10.5, 0.5) {$\cdots$};

\draw (11,0) -- (14,0);
\draw (11,1) -- (14,1);
\draw (11,0) -- (11,1);
\draw (12,0) -- (12,1);
\draw (13,0) -- (13,1);
\draw (14,0) -- (14,1);

\node[scale=0.8] at (0.5, 1.2) {$1$};
\node[scale=0.8] at (1.5, 1.2) {$2$};
\node[scale=0.8] at (3.5, 1.2) {$2p-3$};
\node[scale=0.8] at (4.5, 1.2) {$2p-2$};
\node[scale=0.8] at (5.5, 1.2) {$2p-1$};
\node[scale=0.8] at (6.5, 1.2) {$2p$};
\node[scale=0.8] at (7.5, 1.2) {$2p+1$};
\node[scale=0.8] at (8.5, 1.2) {$2p+2$};
\node[scale=0.8] at (9.5, 1.2) {$2p+3$};
\node[scale=0.8] at (11.5, 1.2) {$2t-2$};
\node[scale=0.8] at (12.5, 1.2) {$2t-1$};
\node[scale=0.8] at (13.5, 1.2) {$2t$};

\node[scale=0.8] at (0.5, -0.2) {$2$};
\node[scale=0.8] at (1.5, -0.2) {$1$};
\node[scale=0.8] at (3.5, -0.2) {$2p-2$};
\node[scale=0.8] at (4.5, -0.2) {$2p-3$};
\node[scale=0.8] at (5.5, -0.2) {$2p+1$};
\node[scale=0.8] at (6.5, -0.2) {$2p$};
\node[scale=0.8] at (7.5, -0.2) {$2p-1$};
\node[scale=0.8] at (8.5, -0.2) {$2p+3$};
\node[scale=0.8] at (9.5, -0.2) {$2p+2$};
\node[scale=0.8] at (11.5, -0.2) {$2t-1$};
\node[scale=0.8] at (12.5, -0.2) {$2t-2$};
\node[scale=0.8] at (13.5, -0.2) {$2t$};

\draw [fill] (0,0) circle [radius=0.05];
\draw [fill] (1,0) circle [radius=0.05];
\draw [fill] (2,0) circle [radius=0.05];
\draw [fill] (3,0) circle [radius=0.05];
\draw [fill] (4,0) circle [radius=0.05];
\draw [fill] (5,0) circle [radius=0.05];
\draw [fill] (7,0) circle [radius=0.05];
\draw [fill] (14,0) circle [radius=0.05];
\draw [fill] (0,1) circle [radius=0.05];
\draw [fill] (1,1) circle [radius=0.05];
\draw [fill] (2,1) circle [radius=0.05];
\draw [fill] (3,1) circle [radius=0.05];
\draw [fill] (4,1) circle [radius=0.05];
\draw [fill] (5,1) circle [radius=0.05];
\draw [fill] (7,1) circle [radius=0.05];
\draw [fill] (14,1) circle [radius=0.05];

\draw (6,0) circle [radius=0.05];
\draw (12,0) circle [radius=0.05];
\draw (8,0) circle [radius=0.05];
\draw (9,0) circle [radius=0.05];
\draw (10,0) circle [radius=0.05];
\draw (11,0) circle [radius=0.05];
\draw (13,0) circle [radius=0.05];
\draw (6,1) circle [radius=0.05];
\draw (12,1) circle [radius=0.05];
\draw (8,1) circle [radius=0.05];
\draw (9,1) circle [radius=0.05];
\draw (10,1) circle [radius=0.05];
\draw (11,1) circle [radius=0.05];
\draw (13,1) circle [radius=0.05];

\end{tikzpicture}
\end{center}

Here, the gray portions represent the chunks of the cylinder that are glued in the criss-cross pattern, and the white portions are glued in a different pattern. We see that in this configuration, there are two vertices, with cone angles of $4\pi  p$ and $4\pi q$. Therefore, this surface is in $\calH(2p-1, 2q-1)$. 

Now, to get a surface in $\calH(\alpha_1, \alpha_2, \ldots, \alpha_s)$, we can suppose that, up to reordering, the $\alpha_i$'s are numbered so that for each $i$, either $\alpha_i$ is even, or both $\alpha_i$ and exactly one of its neighbors is odd. This is because the sum of the $\alpha_i$ must be even. Then, we may create a surface in this stratum by stringing together the representative permutations for each piece (one even $\alpha_i$ or two odd $\alpha_i$'s next to one another) that we created before, possibly separated by identity permutations on any number squares. For example, $(1)(2,3)(4)(5,7)(6)(8)$ would give us an element in $\calH(2,1,1)$ since $(1)(2,3)$ gives an element in $\calH(2)$, $(5,7)(6)(8)$ is in $\calH(1,1)$, and $(4)$ is a space permutation. We notice that since the representative permutation is $\alpha_i + 1$ squares long for $\alpha_i$ even and $\alpha_i + \alpha_j + 2$ squares long for $\alpha_i, \alpha_j$ both odd, we can create a $1$-cylinder surface in $\calH(\alpha_1, \alpha_2, \ldots, \alpha_s)$ with $n$ squares as long as $\sum_{i=1}^s (\alpha_i + 1) = 2g-2+s \leq n$. 
\end{proof}

Next we show that given a stratum, an STS in the stratum is a holonomy torus if and only if it is the smallest (in terms of number of squares) STS for that stratum.

\begin{proposition}\label{prop:holsquares} $O \in \calH(\alpha_1, \dots, \alpha_s)$ is a holonomy torus if and only if $O$ has $2g-2+s$ squares.
\end{proposition}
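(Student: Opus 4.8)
The plan is to recast the holonomy-torus condition as a statement purely about the vertices of $O$, and then pin down the number of vertices by an angle (Gauss--Bonnet) count.

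First I would apply Proposition \ref{prop:althol}, which characterizes holonomy tori as exactly those STSs in which \emph{every} corner of \emph{every} square is a singularity. Since a singularity in $\calH(\alpha_1,\dots,\alpha_s)$ is a cone point of angle $2\pi(\alpha_i+1) > 2\pi$, this says precisely that $O$ has no \emph{regular} vertices, i.e.\ no vertices of cone angle exactly $2\pi$. Thus the proposition reduces to showing: $O$ has no regular vertices if and only if $n = 2g-2+s$.

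Next I would count the vertices of $O$ by summing cone angles. Each of the $n$ unit squares contributes a total angle of $2\pi$, so the surface carries total cone angle $2\pi n$. Writing $r$ for the number of regular vertices, each of which contributes $2\pi$ while the singular vertices contribute $2\pi(\alpha_i+1)$, we equate
$$2\pi n = \sum_{i=1}^s 2\pi(\alpha_i+1) + 2\pi r.$$
Dividing by $2\pi$ and substituting the Gauss--Bonnet relation $\sum_{i=1}^s \alpha_i = 2g-2$ gives $n = (2g-2)+s+r$, hence $r = n-(2g-2)-s$.

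Finally, combining the two steps: $O$ is a holonomy torus $\iff r=0 \iff n=2g-2+s$, which is the claim. As a sanity check, $r \geq 0$ recovers the lower bound $n \geq 2g-2+s$ of Proposition \ref{prop:1cyl}. I do not expect a genuine obstacle here, since Proposition \ref{prop:althol} already supplies the geometric content; the only care required is the clean separation of regular from singular vertices in the angle count, together with the observation that in a stratum every listed cone point is honestly singular.
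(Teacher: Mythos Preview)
Your proof is correct and follows essentially the same approach as the paper: both invoke Proposition~\ref{prop:althol} to translate ``holonomy torus'' into ``no regular vertices,'' and then equate the total corner angle $2\pi n$ with the sum of cone angles to force $n=2g-2+s$. Your version packages both directions into the single identity $r = n-(2g-2)-s$, whereas the paper treats the two implications separately, but the content is identical.
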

\begin{proof} 
$(\Rightarrow)$
If $O$ is a holonomy torus with $n$ squares, by Proposition \ref{prop:althol} every corner of every square is a singularity. So, $2\pi n$ is the total cone angle around the singularities. In any stratum $\calH(\alpha_1, \ldots, \alpha_s)$, the total cone angle around the singularities is $2\pi \sum_{i=1}^s (\alpha_i+1) $. This implies that $O$ must have $(\alpha_1 + \ldots + \alpha_s + s)$ squares. 

$(\Leftarrow)$
If $O$ has  $\sum_{i}^s(\alpha_i+1) = 2g-2+s$ squares, then the total angle at the corners of squares is $2 \pi \sum_{i}^s(\alpha_i+1)$. Note that total angle at the corners of squares = angle coming from the singularities + angle coming from non-singular square corners. However, since $O \in \calH(\alpha_1, \dots, \alpha_s)$, the angle coming from the singularities is $2\pi  \sum_{i}^s(\alpha_i+1)$. Hence, the angle coming from non-singular square corners must be 0 which implies that there are no non-singular square corners so that every corner of every square is a singularity. Then by Proposition \ref{prop:althol} we are done.
\end{proof}

Using this proposition we can obtain an exponential bound in the genus of the surface as an upper bound on the number of holonomy tori in each stratum. 

Next we give an upper bound on the number of squares, as a function of the stratum, below which an STS is forced to be a visibility torus.

\begin{proposition}\label{prop:visguaranteed} For a given stratum $\calH(\alpha_1, \dots, \alpha_s)$, all STSs in the stratum with $\leq 4g - 5 + 2s$ squares are visibility tori. Moreover, this bound is sharp: i.e. there exists a non-visibility STS with $4g-4+2s$ squares.

\end{proposition}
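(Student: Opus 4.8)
The plan is to control both the \emph{number} and the \emph{total length} of horizontal saddle connections of an STS in $\calH(\alpha)$---both of which turn out to depend only on the stratum---and then to feed this into the orbit characterization of visibility from Proposition \ref{prop:altunobs}. Write $N := 2g-2+s$ for the minimal square count.

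First I would record a purely combinatorial count. At a cone point of angle $2\pi(\alpha_i+1)$ there are exactly $\alpha_i+1$ horizontal separatrices pointing rightward, and every horizontal saddle connection emanates rightward from exactly one such prong at its left endpoint, giving a bijection between horizontal saddle connections and rightward prongs. Hence every STS in $\calH(\alpha)$ has exactly $N = \sum_{i=1}^s(\alpha_i+1) = 2g-2+s$ horizontal saddle connections, independent of the surface. Next, writing $\ell_1,\dots,\ell_N$ for their integer lengths and $w_i, h_i$ for the circumferences and heights of the horizontal cylinders, each saddle connection lies on the bottom boundary of exactly one cylinder, so $\sum_j \ell_j = \sum_i w_i \leq \sum_i w_i h_i = n$, using $h_i\geq 1$.

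For the first assertion I would argue by pigeonhole. If an STS in the stratum with $n \leq 4g-5+2s = 2N-1$ squares had no unit horizontal saddle connection, then all $\ell_j \geq 2$, forcing $n \geq \sum_j \ell_j \geq 2N = 4g-4+2s$, a contradiction. Thus \emph{every} STS in the stratum with at most $2N-1$ squares has a unit horizontal saddle connection. Since the $SL_2\ZZ$ action preserves both the stratum and the square count, every surface in the orbit $SL_2\ZZ\cdot O$ is again such an STS, and hence has a unit horizontal saddle connection; by Proposition \ref{prop:altunobs}, $O$ is a visibility torus. For sharpness I would exhibit a surface with all lengths equal to $2$. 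Start from an STS $O_0\in\calH(\alpha)$ with the minimal number $N$ of squares, which exists by Proposition \ref{prop:1cyl}; by Proposition \ref{prop:holsquares} it is a holonomy torus, so by Proposition \ref{prop:althol} every square corner is a singularity and every horizontal saddle connection of $O_0$ has length $1$. Applying the horizontal stretch $\mathrm{diag}(2,1)$ (equivalently, replacing each unit square by two side-by-side squares) yields an STS $O$ in the same stratum with $2N = 4g-4+2s$ squares whose genuine cone points are unchanged and whose horizontal saddle connections all now have length $2$. In particular $(1,0)\in\Prim$ is not a holonomy vector, so $\Prim \not\subseteq \Hol(O)$ and $O$ is not a visibility torus.

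The only genuinely delicate points are bookkeeping ones: verifying that the rightward-prong count gives exactly $N$ saddle connections, and that each saddle connection sits on exactly one cylinder bottom so that $\sum_j \ell_j = \sum_i w_i$; and, in the sharpness construction, checking that the subdivision introduces no length-$1$ horizontal saddle connection. The latter holds because the new midpoint vertices are regular points of cone angle $2\pi$, and saddle connections terminate only at cone points, so the stretched saddle connections genuinely have length $2$. The conceptual heart---that the number of horizontal saddle connections is a stratum invariant equal to the minimal square count $N$---is what makes the threshold $2N$ essentially forced.
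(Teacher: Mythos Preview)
Your proof is correct, and both halves take a somewhat different route from the paper's.

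For the visibility bound, the paper argues by angle-counting: assuming no unit horizontal saddle, each rightward prong at a cone point lands on a \emph{regular} square corner one step to the right, producing at least $N=2g-2+s$ regular corners; adding the $2\pi N$ from singular corners forces $n\geq 2N$. You instead count horizontal saddle connections directly (there are exactly $N$, via the rightward-prong bijection) and bound their total length by the area via $\sum_j\ell_j=\sum_i w_i\leq\sum_i w_ih_i=n$. Your argument is cleaner and makes the threshold $2N$ transparent as ``$N$ objects each of length $\geq 2$''; the paper's version is more elementary in that it avoids invoking the horizontal cylinder decomposition. Both then feed into Proposition~\ref{prop:altunobs} identically.

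For sharpness, the paper builds a \emph{reduced} example by taking the one-cylinder $N$-square surface from Proposition~\ref{prop:1cyl} and stretching a single square (one with top and bottom identified) into $N+1$ squares, then observing that the vector $(N,1)$ is missing from $\Hol$. Your construction---stretch any holonomy torus by $\mathrm{diag}(2,1)$---is slicker and works for the statement as written, but note that your surface has lattice of periods $2\ZZ\times\ZZ$ and is therefore not reduced (it covers the $2\times 1$ torus). The proposition does not demand reducedness, so this is fine; if you wanted a reduced witness you would need something closer to the paper's asymmetric stretch.
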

\begin{proof}
We first show that if a square-tiled surface $O \in \calH(\alpha_1, \dots, \alpha_s)$ has fewer than $2s + 2 \sum \alpha_i$ squares then $O$ has a unit horizontal saddle connection. Assume $O$ does not have a unit horizontal saddle connection. Since $O \in \calH(\alpha_1, \dots, \alpha_s)$, $O$ has cone points with angles $2\pi(\alpha_1+1), \dots, 2 \pi(\alpha_s+1)$. Consider an arbitrary cone point of $O$ with angle $2\pi(\alpha_j+1)$. There are $\alpha_j+1$ distinct regular points (with angle $2\pi$) that are square corners, one unit to the right of this cone point. 

So, $O$ has at least $\sum_{i=1}^s(\alpha_i+1)$ points with angle $2 \pi$ that are corners of squares. Hence, the total angle (contributed by the regular corners of squares and the cone points) is at least $ 2 \pi \sum_{i=1}^s (\alpha_i+1) + 2 \pi \sum_{i=1}^s(\alpha_i+1)$. On the other hand, if $O$ has $n$ squares, then this total is $2 \pi n$. Therefore, 
$$ 2 \pi n \geq 2 \pi \sum_{i=1}^s (\alpha_i+1) + 2 \pi \sum_{i=1}^s(\alpha_i+1) \implies n \geq 2s + 2\sum_{i=1}^s \alpha_i$$

Now given $O \in \calH(\alpha_1, \dots, \alpha_s)$ with $\leq 2s + 2\sum_{i=1}^s \alpha_i - 1 = 4g-4+2s-1$ squares, every STS in its $\SL_2\ZZ$ orbit has the same number of squares, and hence  has a unit horizontal saddle connection. By Proposition \ref{prop:altunobs}, $O$ is a visibility torus.

For the other inequality, we will give a construction of a non-visibility and reduced STS in each $\calH(\alpha)$ with $4g-4+2s$ squares. To do so, we start with a one-cylinder surface in $\calH(\alpha)$ with $2g-2+s$ squares, following the construction in the proof of Proposition \ref{prop:1cyl} with no spacer squares. In this surface, every vertex of every square is a cone point, and there is at least one square for which the top and bottom sides are identified. We can then create a new reduced STS in $\calH(\alpha)$ with $4g-4+4s$ squares by splitting this square with top and bottom edges identified into $2g-1+s$ squares. This is demonstrated in the schematic diagram below, where the gray square is the square that we will split, and all cone points are marked with a dot. 

\begin{center}
\begin{tikzpicture}
\draw[fill = lightgray] (0,0) -- (1,0) -- (1,1) -- (0,1) -- (0,0); 
\draw (0,0) -- (4,0) -- (4,1) -- (0,1) -- (0,0); 
\draw (1,0) -- (1,1);
\draw (2,0) -- (2,1);
\draw (3,0) -- (3,1);
\draw[->] (4.5,0.5) -- (5.5, 0.5); 
\draw[fill = lightgray] (6,0) -- (11,0) -- (11,1) -- (6,1) -- (6,0); 
\draw (6,0) -- (14,0) -- (14,1) -- (6,1) -- (6,0); 
\foreach \i in {7,...,13}
	\draw (\i,0) -- (\i,1);
	
\foreach \i in {0,...,4}
	\draw[fill] (\i,0) circle [radius = 0.05];
\foreach \i in {0,...,4}
	\draw[fill] (\i,1) circle [radius = 0.05];

\foreach \i in {11,...,14}
	\draw[fill] (\i,0) circle [radius = 0.05];
\foreach \i in {11,...,14}
	\draw[fill] (\i,1) circle [radius = 0.05];

\draw[fill] (6,0) circle [radius = 0.05];
\draw[fill] (6,1) circle [radius = 0.05];

\node at (8.35,1.8){$2g-1+s$ squares};

\draw[decorate,decoration={brace,amplitude=10pt}]
(6.1,1.2) -- (10.9,1.2);

\node at (12.5,1.8){$2g-3+s$ squares};

\draw[decorate,decoration={brace,amplitude=10pt}]
(11.1,1.2) -- (13.9,1.2);
\end{tikzpicture} 
\end{center} 
The resulting STS has $4g-4+2s$ squares, is still in $\calH(\alpha)$, and is reduced since it has $(1,0)$ and $(0,1)$ as holonomy vector. However, the STS is non-visibility because it does not have $(2g-2+s, 1)$ as a holonomy vectors. 
\end{proof}

We remark here that combined with Proposition \ref{prop:holsquares}, we see that all STSs with number of squares $2g-2+s < n < 4g-4+2s$ have $\Prim$ strictly contained in their holonomy sets. We can also bound the maximum number of squares a visibility torus in a stratum may have:

\begin{proposition}\label{prop:visupperbound} Given a stratum $\calH(\alpha)$, there exists $N(\alpha)$ such that all STSs with $n > N(\alpha)$ number of squares are non-visibility STSs. 
\end{proposition}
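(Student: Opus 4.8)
The plan is to combine Dozier's quadratic upper bound (Theorem \ref{thm:dozier}) with the classical quadratic lower bound on the number of primitive integer vectors, and to play the two rates against each other after rescaling an $n$-square surface to unit area. The governing idea is that a visibility torus must contain \emph{all} of $\Prim$ in its holonomy set, so its saddle-connection count grows at least as fast as the full primitive-lattice count; Dozier caps that growth by a constant depending only on the stratum, and the extra factor coming from the number of squares is what ultimately exceeds the cap.

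Concretely, I would first recall the elementary estimate that the number $P(R)$ of primitive vectors in $\ZZ^2$ of Euclidean length at most $R$ satisfies $P(R) \geq c_1 R^2$ for all $R$ above some universal $R_1$, with $c_1$ a universal constant (anything below $6/\pi$ works). Next, given a visibility torus $O \in \calH(\alpha)$ with $n$ squares, I would pass to the unit-area surface $\hat O := \tfrac{1}{\sqrt n}\, O$. Since this rescaling is a homothety it preserves directions and multiplies every saddle-connection length by $1/\sqrt n$, so $N(\hat O; \rho; S^1) = N(O; \rho\sqrt n; S^1)$ for every $\rho$. Because $O$ is a visibility torus, $\Prim \subseteq \Hol(O)$, and distinct primitive vectors are realized by distinct saddle connections, giving $N(O; \rho\sqrt n; S^1) \geq P(\rho\sqrt n)$.

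Then I would invoke Dozier's theorem for the specific surface $\hat O$: there is a threshold $R_0(\hat O)$ so that $N(\hat O; \rho; S^1) \leq 2\pi c\,\rho^2$ for all $\rho > R_0(\hat O)$, where $c$ depends only on the stratum. To apply the interval version cleanly I would partition $S^1$ into finitely many arcs $I_1,\dots,I_m$, apply the bound on each with $R_0(\hat O) := \max_j R_0(\hat O; I_j)$, and sum to recover the full-circle estimate with total angle $2\pi$ (equivalently, one may simply fix a sector and count primitive vectors there). Choosing any single $\rho > \max(R_0(\hat O), R_1)$ and chaining the bounds yields $c_1 \rho^2 n \leq P(\rho\sqrt n) \leq N(\hat O;\rho;S^1) \leq 2\pi c\,\rho^2$, hence $n \leq 2\pi c / c_1$. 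Since the right-hand side depends only on $\alpha$, setting $N(\alpha) := 2\pi c / c_1$ and taking the contrapositive finishes the argument: any $O$ with more than $N(\alpha)$ squares cannot be a visibility torus.

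The step to handle with care is the surface-dependent threshold $R_0(\hat O)$ in Dozier's statement, which is the main obstacle to obtaining a \emph{uniform} bound over the stratum. The argument works precisely because $R_0$ drops out: once the $\rho^2$ factors cancel, the decisive inequality $n \leq 2\pi c / c_1$ no longer mentions $\rho$, so it suffices to select one admissible $\rho$ per surface while the resulting bound on $n$ is uniform across all of $\calH(\alpha)$. The only remaining bookkeeping is to confirm that the primitive-vector lower bound and Dozier's upper bound are compared at the same rescaled radius, which the homothety computation above guarantees.
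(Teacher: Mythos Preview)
Your argument is correct and follows essentially the same route as the paper: both combine the quadratic lower bound on $|\Prim_R|$ with Dozier's stratum-uniform quadratic upper bound after rescaling to unit area, cancel the $R^2$ (your $\rho^2$) factors, and observe that the surface-dependent threshold $R_0$ disappears from the final inequality $n \le \text{const}(\alpha)$. The only cosmetic difference is that the paper transports Dozier's bound back to the original surface $O$ (obtaining $|\Hol(O)_r|\le c_2 r^2/n$) while you transport the primitive-vector count forward to $\hat O$; your extra care in summing over arcs of $S^1$ and in checking that $R_0$ drops out is sound.
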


\begin{proof}
Fix a stratum $\calH(\alpha)$. It is well known that if $\Prim_{r}$ is the set of primitive vectors in $\ZZ^2$ of length $\leq r$, then $$|\Prim_{r} | = \frac{6}{\pi} r^2 + O(r^{1}) .$$ Let $O$ be a visibility torus in $STS_n \cap \calH(\alpha)$.  Then, if $\Hol(O)_r$ denotes the set of holonomy vectors of $O$ of length $\leq r$, it follows that there exists $c_1 >0$ and $R > 0$ such that, for all $r \geq R$,  $$|\Hol(O)_r| \geq c_1r^2.$$

By Theorem \ref{thm:dozier}, there is a constant $c_2 > 0$ depending only on the stratum such that $|\Hol(X)_r| \leq c_2 r^2$ for all $r$ greater than some constant $R_0(X)$ depending on $X$. In this theorem, all surfaces are assumed to have area $1$. Then, if $O \in \STS_n \cap \calH(\alpha)$, we can scale $O$ by $1/\sqrt{n}$ to get an area one surface. Applying the theorem then gives us that the scaled $O$ has $\leq c_2 (r/\sqrt{n})^2$ saddle connections of length $\leq r/\sqrt{n}$. This implies that for all $r \geq R_0(O)$, we have the following inequality on the original surface: $$|\Hol(O)_r| \leq \frac{c_2 r^2} {n}.$$

Let $N = \frac{c_2}{c_1}$. Then, if $O \in STS_n \cap \calH(\alpha)$ is a visibility torus, we have that $$c_1 r^2 \leq |\Hol(O)_r| \leq \frac{c_2 r^2}{n}$$ which implies  $c_1 r^2 \leq c_2 r^2/n$ for all large enough $r$, giving us that $n \leq c_2/c_1$. This cannot hold for $n > N = c_2/c_1$. Therefore, there are no visibility square-tiled surfaces with  more than $N$ squares in $\calH(\alpha)$. 
\end{proof}

Combining Propositions \ref{prop:1cyl}, \ref{prop:holsquares}, \ref{prop:visguaranteed}, and \ref{prop:visupperbound} we get our theorem:
\faketori

Noting that number of square-tiled surfaces in a given stratum $\calH(\alpha)$ with fewer than or equal to $N(\alpha)$ squares is finite, we conclude that there are finitely many visibility tori in $\calH(\alpha)$.

\begin{remark}
Let $O \in \calH(\alpha)$ with $n$ squares. The best constants $c_1$ and $c_2$ in the proof of Proposition \ref{prop:visupperbound} can be stated as follows:
$$ c_1 = \sup \{ c > 0 : |\Prim_{r}| \geq cr^2  \text{ for all } r\geq 1\}$$
$$ c_2 = \inf \{ c(\alpha) >0 : |\Hol(O)_{r}| \leq \frac{c(\alpha)r^2}{n} \text{for all } O \in \calH(\alpha), \text{ for all } r \geq R(O)\}$$
where $R(O)$ is a constant depending on $O$. 
\end{remark}

Since the number of visibility tori in any given stratum is finite, given a stratum $\calH(\alpha)$, we define 
$$M(\alpha):= \min\{N: \text{ for all } n > N, O \in \STS_n \cap \calH(\alpha) \text{ implies } O \text{ is a non-visibility STS}\}$$
We note here that the constant $M(\alpha)$ is independent of whether we are working with all STSs or just reduced STSs, and that the constant from Theorem \ref{faketori}, $N(\alpha) \geq M(\alpha)$.

The following table records the threshold values of number of squares, stated in Theorem \ref{faketori} for some early strata (genus 2 and 3).
$$
\begin{array}{c|c|c|c|c}\label{boundsonsquares}
\calH(\alpha) &2g+s-2& 4g+2s-5 & \text{size of largest known vis. torus} &  \text{sizes known not to have vis. torus} \\
\hline
\calH(2)& 3 & 5 & 5 & 6-55\\
\calH(1,1) &4 & 7 & 9& 10-52\\
\calH(4) &5& 9 & 10 & 11 - 33 \\
\calH(3,1)&6 & 11 & 15&13, 14, 16 - 19\\
\calH(2,2) &6& 11 & 18& 13, 16, 17, 19 - 23\\
\calH(2,1,1) &7& 13 & 20 &17\\
\calH(1,1,1,1) &8&15 & 25 &17\\
\end{array} 
$$

Using the Sage package {\sf Surface Dynamics} \cite{surfacedyn}, we show the largest visibility tori and smallest non-visibility STSs found in some early strata and illustrated in Figure \ref{fig:largesmallexamples}. 



\begin{figure}

\begin{tikzpicture}[scale=0.95]

\node at (-1,0){$\calH(2)$} ;
\node at (-5, -1){\begin{tikzpicture}[scale=0.58]
\draw (0,0) -- (5,0) -- (5,1) -- (0,1) -- (0,0);
\draw (0,1) -- (0,2) -- (1,2) -- (1,0); 
\draw (2,0) -- (2,1); 
\draw (3,0) -- (3,1); 
\draw (4,0) -- (4,1);

\draw [fill] (0,0) circle [radius=0.15];
\draw [fill] (1,0) circle [radius=0.15];
\draw [fill] (1,1) circle [radius=0.15];
\draw [fill] (0,1) circle [radius=0.15];
\draw [fill] (1,2) circle [radius=0.15];
\draw [fill] (0,2) circle [radius=0.15];
\draw [fill] (5,0) circle [radius=0.15];
\draw [fill] (5,1) circle [radius=0.15];
\end{tikzpicture} };

\node at (4, -1){\begin{tikzpicture}[scale=0.58]
\draw (0,0) -- (4,0) -- (4,1) -- (0,1) -- (0,0);
\draw (0,1) -- (0,2) -- (1,2) -- (1,0); 
\draw (2,0) -- (2,1); 
\draw (3,0) -- (3,1); 
\draw [fill] (0,0) circle [radius=0.15];
\draw [fill] (1,0) circle [radius=0.15];
\draw [fill] (1,1) circle [radius=0.15];
\draw [fill] (0,1) circle [radius=0.15];
\draw [fill] (1,2) circle [radius=0.15];
\draw [fill] (0,2) circle [radius=0.15];
\draw [fill] (4,0) circle [radius=0.15];
\draw [fill] (4,1) circle [radius=0.15];
\end{tikzpicture}};
\node at (-5, -2){$n =6$, primitive, Veech group index = 36};
\node at (4, -2){$n=5$, primitive, Veech group index = 18};

\node at (-1, -3){$\calH(1,1)$};
\node at (-5, -4){\begin{tikzpicture}[scale = 0.55]
\draw (0,0) -- (5,0) -- (5,1) -- (0,1) -- (0,0);
\draw (0,1) -- (-2,1) -- (-2, 2) -- (1,2) -- (1,0);
\draw (2,0) -- (2,1); 
\draw (3,0) -- (3,1); 
\draw (4,0) -- (4,1);
\draw (0,1) -- (0,2);
\draw (-1,1) -- (-1, 2);
\draw [fill] (0,0) circle [radius=0.15];
\draw (1,0) circle [radius=0.15];
\draw (1,1) circle [radius=0.15];
\draw [fill] (0,1) circle [radius=0.15];
\draw  (1,2) circle [radius=0.15];
\draw [fill] (0,2) circle [radius=0.15];
\draw [fill] (5,0) circle [radius=0.15];
\draw [fill] (5,1) circle [radius=0.15];
\draw (-2, 1) circle [radius=0.15];
\draw (-2,2) circle [radius=0.15];
\end{tikzpicture}};

\node at (4, -4){\begin{tikzpicture}[scale=0.58]
\draw (0,0) -- (6,0) -- (6,1) -- (0,1) -- (0,0);
\draw (0,1) -- (-2,1) -- (-2, 2) -- (1,2) -- (1,0);
\draw (2,0) -- (2,1); 
\draw (3,0) -- (3,1); 
\draw (4,0) -- (4,1);
\draw (5,0) -- (5,1);
\draw (0,1) -- (0,2);
\draw (-1,1) -- (-1, 2);

\draw [fill] (0,0) circle [radius=0.15];
\draw (1,0) circle [radius=0.15];
\draw (1,1) circle [radius=0.15];
\draw [fill] (0,1) circle [radius=0.15];
\draw  (1,2) circle [radius=0.15];
\draw [fill] (0,2) circle [radius=0.15];
\draw [fill] (6,0) circle [radius=0.15];
\draw [fill] (6,1) circle [radius=0.15];
\draw (-2, 1) circle [radius=0.15];
\draw (-2,2) circle [radius=0.15];

\end{tikzpicture}};

\node at (-5, -5){$n = 8, \text{ primitive, Veech group index}= 144$};
\node at (3.5, -5){ $n = 9, \text{ not primitive, Veech group index}= 48$};

\node at (-1, -6){$\calH(4)$};

\node at (-5, -7){\begin{tikzpicture}[scale=0.58 ]
\draw (0,0) -- (5,0) -- (5,1) -- (0,1) -- (0,0);
\draw (0,1) -- (-4,1) -- (-4, 2) -- (1,2) -- (1,0);
\draw (2,0) -- (2,1); 
\draw (3,0) -- (3,1); 
\draw (4,0) -- (4,1);
\draw (0,1) -- (0,2);
\draw (-1,1) -- (-1, 2);
\draw (-2,1) -- (-2, 2);
\draw (-3,1) -- (-3, 2);

\draw [fill] (0,0) circle [radius=0.15];
\draw [fill](1,0) circle [radius=0.15];
\draw [fill] (1,1) circle [radius=0.15];
\draw [fill] (0,1) circle [radius=0.15];
\draw [fill] (1,2) circle [radius=0.15];
\draw [fill] (0,2) circle [radius=0.15];

\draw [fill] (0,2) circle [radius=0.15];
\draw [fill] (5,0) circle [radius=0.15];
\draw [fill] (5,1) circle [radius=0.15];
\draw [fill]( -2, 1) circle [radius=0.15];
\draw [fill] (-2,2) circle [radius=0.15];

\draw [fill]( -4, 1) circle [radius=0.15];
\draw [fill] (-4,2) circle [radius=0.15];

\node at (-3.5, 0.7) {$1$};
\node at (-2.5, 0.7) {$2$};
\node at (-1.5, 0.7) {$3$};
\node at (-0.5, 0.7) {$4$};

\node at (-3.5, 1.4+0.9) {$3$};
\node at (-2.5, 1.4+0.9) {$4$};
\node at (-1.5, 1.4+0.9) {$1$};
\node at (-0.5, 1.4+0.9) {$2$};

\end{tikzpicture}};

\node at (4, -7){ \begin{tikzpicture}[scale=0.58]
\draw (0,0) -- (7,0) -- (7,1) -- (0,1) -- (0,0);
\draw (0,1) -- (-2,1) -- (-2, 2) -- (1,2) -- (1,0);
\draw (2,0) -- (2,1); 
\draw (3,0) -- (3,1); 
\draw (4,0) -- (4,1);
\draw (5,0) -- (5,1);
\draw (6,0) -- (6,1);

\draw (0,1) -- (0,2);
\draw (-1,1) -- (-1, 2);

\draw [fill] (0,0) circle [radius=0.15];
\draw [fill](1,0) circle [radius=0.15];
\draw [fill] (1,1) circle [radius=0.15];
\draw [fill] (0,1) circle [radius=0.15];
\draw [fill] (1,2) circle [radius=0.15];
\draw [fill] (0,2) circle [radius=0.15];

\draw [fill] (0,2) circle [radius=0.15];
\draw [fill] (7,0) circle [radius=0.15];
\draw [fill] (7,1) circle [radius=0.15];
\draw [fill]( -2, 1) circle [radius=0.15];
\draw [fill] (-2,2) circle [radius=0.15];

\draw [fill]( -1, 1) circle [radius=0.15];
\draw [fill] (-1,2) circle [radius=0.15];

\node at (-1.5, 0.7) {$1$};
\node at (-0.5, 0.7) {$2$};

\node at (-1.5, 1.4+0.9) {$2$};
\node at (-0.5, 1.4+0.9) {$1$};

\end{tikzpicture}};

\node at (-5, -8.2){$n = 10, \text{ primitive, Veech group index}= 450$};

\node at (4, -8.2){$n = 10, \text{ primitive, Veech group index}= 1065$};

\node at (-1,-9){$\calH(3,1)$};
 
\node at (-5.5, -10){\begin{tikzpicture}[scale=0.58]
\draw (0,0) -- (12,0) -- (12,1) -- (0,1) -- (0,0);
\foreach \x in {1,2,3,4,5,6,7,8,9,10,11}
	\draw (\x,0) -- (\x,1); 

\foreach \x in {0, 1, 2}{
	\draw [fill] (\x,0) circle [radius=0.15];
	\draw [fill] (\x,1) circle [radius=0.15];
	}
\foreach \x in {3, 5}{
	\draw  (\x,0) circle [radius=0.15];
	\draw  (\x,1) circle [radius=0.15];
	}
	
\foreach \x in {4, 12}{
	\draw [fill] (\x,0) circle [radius=0.15];
	\draw [fill] (\x,1) circle [radius=0.15];
	}

\node at (0.5, -0.3 ) {$1$};
\node at (1.5, -0.3) {$2$};
\node at (1.5, 1.5-0.2 ) {$1$};
\node at (0.5, 1.5-0.2) {$2$};

\node at (2.5, -0.3 ) {$3$};
\node at (4.5, -0.3) {$5$};
\node at (4.5, 1.5-0.2 ) {$3$};
\node at (2.5, 1.5-0.2) {$5$};

\end{tikzpicture}};

\node at (3.5, -10){\begin{tikzpicture}[scale=0.58]
\draw (0,0) -- (6,0) -- (6,1) -- (0,1) -- (0,0);
\draw (0,1) -- (-7, 1) -- (-7, 2) -- (2, 2) -- (2, 1);
\foreach \x in {1,2,3,4,5}
	\draw (\x,0) -- (\x,1); 
	
\foreach \x in {1,0,-1, -2, -3, -4, -5, -6}
	\draw (\x,1) -- (\x,2); 

\foreach \x in {2, 5}{
	\draw [fill] (\x,0) circle [radius=0.15];
	\draw [fill] (\x,1) circle [radius=0.15];
	}
\draw [fill] (2,2) circle [radius=0.15];
\draw [fill] (-7,1) circle [radius=0.15];
\draw [fill] (-7,2) circle [radius=0.15];

\draw [fill] (-1,1) circle [radius=0.15];
\draw [fill] (-1,2) circle [radius=0.15];

\draw (0,1) circle [radius=0.15];
\draw (3,1) circle [radius=0.15];
\draw (3,0) circle [radius=0.15];

\draw (6,1) circle [radius=0.15];

\node at (-0.5, 0.9-0.2 ) {$1$};
\node at (2.5, 1.3) {$1$};

\node at (2.5, -0.3 ) {$2$};
\node at (5.5, 1.3 ) {$2$};

\node at (-0.5, 2.3 ) {$3$};
\node at (5.5, -0.3 ) {$3$};

\end{tikzpicture}};

\node at (-5, -11.2){$n=12, \text{ primitive, Veech group index} = 86784$};
\node at (3.5, -11.2){$n =15, \text{ not primitive, Veech group index} = 3072$};

\node at (-1, -12){$\calH(2,2)$};

\node at (-6, -13.5){\begin{tikzpicture}[scale=0.58]
\draw (0,0) -- (4,0) -- (4,1) -- (0,1) -- (0,0);
\draw (1,1) -- (1, 2) -- (5, 2) -- (5, 1) -- (4, 1);
\draw (1,2) -- (0, 2) -- (0, 3) -- (4, 3) -- (4, 2);

\foreach \x in {2,3}
	\draw (\x,0) -- (\x,3); 
\foreach \x in {0,2}
	\draw (1,\x) -- (1,\x+1); 
	
\draw (4,1) -- (4, 2);

\foreach \x in {0,1,2,3}{
	\draw [fill](1,\x) circle [radius=0.15];
	\draw (4,\x) circle [radius=0.15];
	\draw (0,\x) circle [radius=0.15];
	}
\foreach \x in {1,2}
	\draw [fill](5,\x) circle [radius=0.15];
\end{tikzpicture}};

\node at (2.75, -13.5){\begin{tikzpicture}[scale=0.58]
\draw (0,0) -- (15,0) -- (15,1) -- (0,1) -- (0,0);
\draw (0,1) -- (-1, 1) -- (-1, 2) -- (2, 2) -- (2, 1);
\foreach \x in {1,2,3,4,5, 6, 7, 8, 9, 10, 11, 12, 13, 14}
	\draw (\x,0) -- (\x,1); 
\foreach \x in {0, 1}
	\draw (\x,1) -- (\x,2); 
	
\foreach \x in {0, 6, 15}{
	\draw (\x,0) circle [radius=0.15];
	\draw (\x,1) circle [radius=0.15];
	}

\foreach \x in {2, 5}{
	\draw [fill](\x,0) circle [radius=0.15];
	\draw [fill] (\x,1) circle [radius=0.15];
	}
\draw [fill](-1,1) circle [radius=0.15];
\draw [fill] (-1,2) circle [radius=0.15];	
\draw [fill] (2,2) circle [radius=0.15];	

\draw (0,2) circle [radius=0.15];

\node at (-0.5, 1-0.3 ) {$1$};
\node at (5.5, 1.3 ) {$1$};

\node at (-0.5, 2.3 ) {$2$};
\node at (5.5, -0.3 ) {$2$};

\end{tikzpicture}};

\node at (-5.5, -14.8){$n = 12, \text{ not primitive, Veech group index} = 6$};
\node at (3.5, -14.8){$n = 18, \text{ not primitive, Veech group index} = 4320$};

\node at (-1, -15.6){$\calH(2,1,1)$};

\node at (-5, -16.8){\begin{tikzpicture}[scale=0.58] 
\draw (0,0) -- (14,0) -- (14,1) -- (0,1) -- (0,0);
\foreach \x in {1,2,3,4,5, 6, 7, 8, 9, 10, 11, 12, 13}
	\draw (\x,0) -- (\x,1); 
	
\foreach \x in {0, 12,13,14}{
	\draw [fill](\x,0) circle [radius=0.15];
	\draw [fill] (\x,1) circle [radius=0.15];
	}
	
\foreach \x in {1, 3}{
	\draw (\x,0) circle [radius=0.15];
	\draw  (\x,1) circle [radius=0.15];
	}

\foreach \x in {2, 4}{
	\draw (\x-0.2,-0.2) rectangle (\x+0.2, 0.2);
	\draw (\x-0.2,1-0.2) rectangle (\x+0.2, 1+0.2);

	}
	
\node at (1.5, -0.3 ) {$1$};
\node at (3.5, 1.3 ) {$1$};

\node at (3.5, -0.3 ) {$2$};
\node at (1.5, 1.3 ) {$2$};

\node at (12.5, -0.3 ) {$3$};
\node at (13.5, 1.3 ) {$3$};

\node at (13.5, -0.3 ) {$4$};
\node at (12.5, 1.3 ) {$4$};

\end{tikzpicture}};
\node at (4, -16.8){\begin{tikzpicture}[scale=0.58] 
\draw (0,0) -- (0,2) -- (8,2) -- (8,0) -- (1,0) -- (1,-1) -- (-3,-1) -- (-3,0) -- (0,0);
\draw (0,1) -- (8,1);
\draw (0,0) -- (1,0);
\foreach \x in {1,2,3,4,5, 6, 7 }
	\draw (\x,0) -- (\x,2); 
\foreach \x in {0,-1,-2}
	\draw (\x,0) -- (\x,-1);

\foreach \x in {0, 4,8}{
	\foreach \y in {0,2}{
	\draw [fill](\x,\y) circle [radius=0.15];
	}
	}
	
\foreach \y in {0, -1,2}{
	\draw (1,\y) circle [radius=0.15];
	
	}
\foreach \y in {0, -1}{
	\draw (-3,\y) circle [radius=0.15];
	
	}

\foreach \y in {0,2}{
	\draw (3-0.2,\y-0.2) rectangle (3+0.2, \y+0.2);
	}

\foreach \y in {0,-1}{
	\draw (-1-0.2,\y-0.2) rectangle (-1+0.2, \y+0.2);
	}
	
\draw [fill] (0,-1) circle [radius = 0.15];
\node at (-0.5, -1.3 ) {$1$};
\node at (3.5, 2.3 ) {$1$};

\node at (3.5, -0.3 ) {$2$};
\node at (-0.5, 0.3 ) {$2$};

\end{tikzpicture}};

\node at (-5, -18.3){$n =14, \text{ primitive, Veech group index}= 778968$};
\node at (3.5, -18.3){$n =20, \text{ not primitive, Veech group index}= 36$};

\node at (-1, -19.3){$\calH(1,1,1,1)$};

\node at (-5, -20.7){\begin{tikzpicture}[scale=0.58]
\draw (0,0) -- (14,0) -- (14,1) -- (0,1) -- (0,0);
\draw (0,1) -- (-1, 1) -- (-1, 2) -- (1, 2) -- (1, 1);
\foreach \x in {1,2,3,4,5, 6, 7, 8, 9, 10, 11, 12, 13}
	\draw (\x,0) -- (\x,1); 
\draw (0,1) -- (0,2);

\foreach \x in {0, 14}{
	\draw [fill](\x,0) circle [radius=0.15];
	\draw [fill] (\x,1) circle [radius=0.15];
	}
\draw [fill] (1,2) circle [radius = 0.15];
\draw [fill] (-1,2) circle [radius = 0.15];

\foreach \x in {0, 1}{
	\draw (1,\x) circle [radius = 0.15];
	\draw (0, \x) circle [radius = 0.15];
	}
\draw (-1,1) circle [radius = 0.15];
\draw (0,2) circle [radius = 0.15];

\foreach \x in {2, 4}{
	\draw (\x-0.2,-0.2) rectangle (\x+0.2, 0.2);
	\draw (\x-0.2,1-0.2) rectangle (\x+0.2, 1+0.2);
}

\foreach \x in {3, 5}{
	\draw [fill](\x-0.2,-0.2) rectangle (\x+0.2, 0.2);
	\draw [fill](\x-0.2,1-0.2) rectangle (\x+0.2, 1+0.2);
}

\node at (-0.5, 1-0.3 ) {$1$};
\node at (0.5, 2.3 ) {$1$};

\node at (0.5, -0.3 ) {$2$};
\node at (-0.5, 2.3 ) {$2$};

\node at (2.5, -0.3 ) {$3$};
\node at (4.5, 1.3 ) {$3$};

\node at (4.5, -0.3 ) {$4$};
\node at (2.5, 1.3 ) {$4$};

%
%
%
%

\end{tikzpicture}};

\node at (4, -20.7){\begin{tikzpicture}[scale=0.58]
\draw (0,0) -- (10,0) -- (10,1) -- (9,1) -- (9,4) -- (6,4)--(6,5) -- (4,5) -- (4,2) -- (6,2) -- (6,1) -- (0,1) -- (0,0);

\foreach \x in {1,2,3,4,5,6,9}{
	\draw (\x,0) -- (\x, 1); 
	}

\foreach \x in {7,8}
	\draw (\x,0) -- (\x,4); 
\foreach \y in {1,2,3}
	\draw (6,\y) -- (9,\y); 
	
\foreach \x in {5,6} \draw (\x, 2) -- (\x, 4);
\foreach \y in {3,4} \draw (4, \y) -- (6, \y);

\draw (5, 4) -- (5, 5);

\foreach \y in {2,5}{
	\draw [fill](6,\y) circle [radius=0.15];
	
	}
\foreach \y in {0,1}
	\draw [fill](1,\y) circle [radius=0.15];

\foreach \y in {0,1}
	\draw [fill](1,\y) circle [radius=0.15];

\foreach \y in {0,1}{
	\foreach \x in {0, 10}{
	\draw (\x,\y) circle [radius=0.15];
	}}
	
\foreach \y in {2,5}
	\draw (5,\y) circle [radius=0.15];
	
\foreach \y in {0,1,4}{
	\draw (6-0.2,\y-0.2) rectangle (6+0.2, \y+0.2);
	}

\foreach \y in {0,1,4}{
	\draw [fill](9-0.2,\y-0.2) rectangle (9+0.2, \y+0.2);
	}
	
\foreach \y in {2,5}{
	\draw [fill](4-0.2,\y-0.2) rectangle (4+0.2, \y+0.2);
	}

\node at (0.5, 1.3){$1$};
\node at (5.5, 2-0.3){$1$};

\node at (0.5, -0.3){$2$};
\node at (5.5, 5.3){$2$}; 

\node at (6.3, 1.5){$3$};
\node at (6.3, 4.5){$3$};

\foreach \y in {4, 5, 6}{
	\node at (4-0.3, \y - 1.5){\y};
	\node at (9+0.3, \y - 2.5){\y};
}

\end{tikzpicture}};

\node at (-5.5, -22.7){$n= 16, \text{ primitive, Veech group index}= 1387296$};

\node at (3.5, -22.7){$n=25, \text{ not primitive, Veech group index}=49536$};

\foreach \y in {-23,-18.6,-15.1, -11.5,-8.5,  -5.3, -2.3}{
	\draw (-9.5,\y) -- (7.5, \y);
	}
\end{tikzpicture}

\caption{Large and small examples illustrating visibility: For each stratum, the surface on the left is the smallest non-visibility STS, and the surface on the right is the largest visibility torus we have found}
\label{fig:largesmallexamples}

\end{figure}
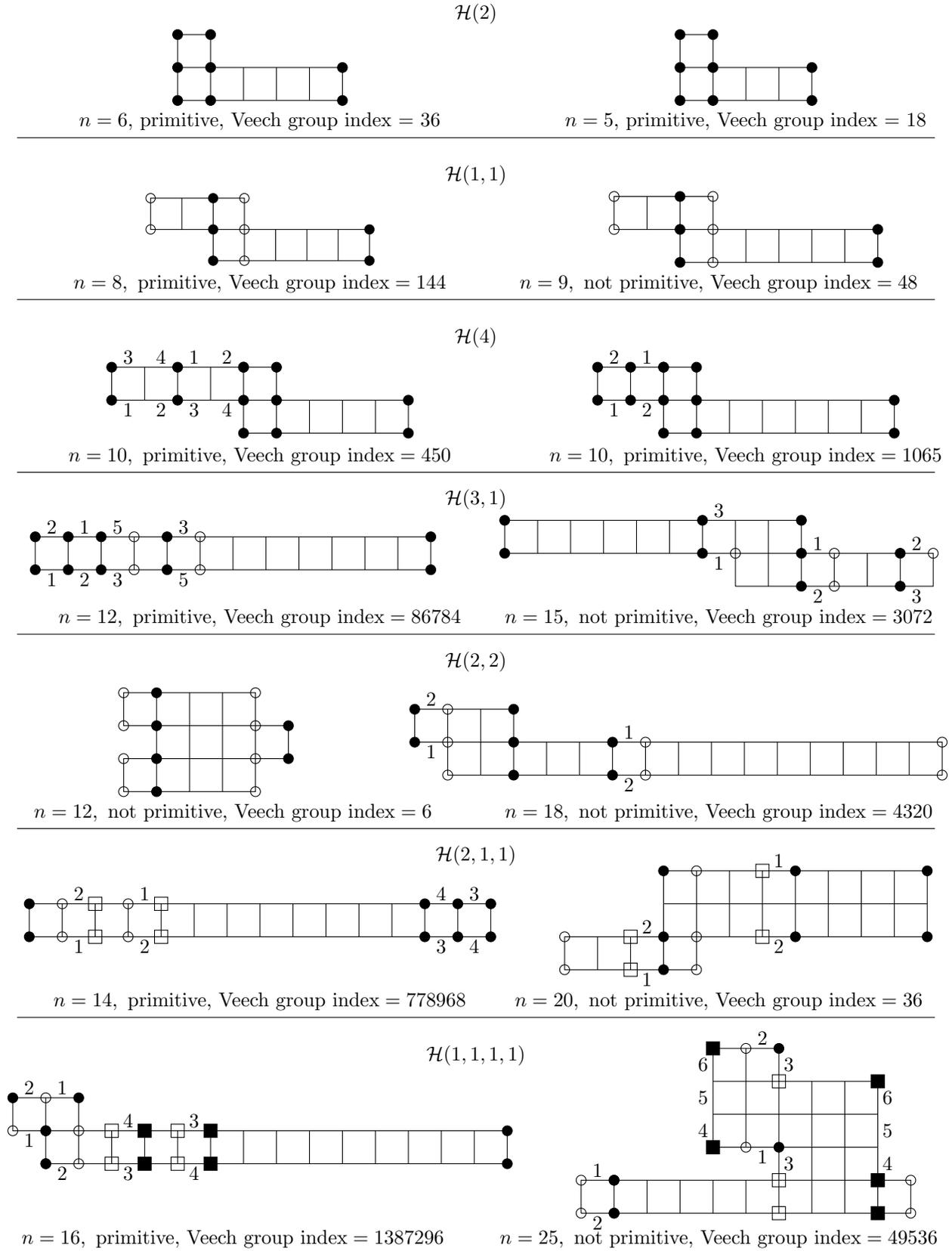


\subsection{Counting problems related to symmetry}

Although symmetry tori are not a subset of holonomy tori (or vice versa), we still expect that the number of symmetry tori in each stratum is also finite. As a step toward a proof of this conjecture, we prove that there are no symmetry tori in genus two by analyzing each stratum of genus two separately. 

The fact that $\calH(2)$ does not contain a symmetry torus, follows from the works of Hubert-Leli\` evre \cite{hubertlelievre}, McMullen \cite{mcmullen} and  Leli\` evre-Royer \cite{lelievreroyer}. The first two works show that the reduced STSs in $\calH(2)$ partition into one or two orbits (depending on the parity of the number of squares) under the action of $SL_2\ZZ$. Leli\` evre-Royer then give a formula for the number of STSs in each orbit. For odd $n \geq 3$, there are two orbits $A_n$ and $B_n$ and they show the respective counts to be:
$$ |A_n| = \frac{3}{16}(n-1)n^2 \prod_{p|n} \left(1-\frac{1}{p^2}\right) \hspace{1cm} \text{and} \hspace{1cm} |B_n| = \frac{3}{16}(n-3)n^2 \prod_{p|n} \left(1-\frac{1}{p^2}\right)$$
For even $n \geq 4$, there is only one orbit with $\frac{3}{8} (n-2)n^2 \prod_{p|n} (1-p^{-2})$ STSs. If an STS were a symmetry torus in $\calH(2)$, it would be the only one in its orbit.  Since neither of these orbit counts are 1 for any $n \geq 3$ (which is the minimum number of squares required to be in $\calH(2)$), it follows that there are no symmetry tori in $\calH(2)$. 

The orbit classification of reduced surfaces in $\calH(1,1)$ (and hence the individual orbit counts) is not known. However, one can still show that there are no symmetry tori in $\calH(1,1)$ by analyzing the separatrix diagrams:

\begin{lemma} There are no symmetry tori in $\calH(1,1)$.
\end{lemma}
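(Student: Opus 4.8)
The plan is to rule out a symmetry torus in $\calH(1,1)$ by combining the classification of horizontal cylinder decompositions (separatrix diagrams) of $\calH(1,1)$ with the strong constraints that the hypothesis $SL(O)=SL_2\ZZ$ places on them. Suppose for contradiction that $O\in\calH(1,1)$ is a symmetry torus, so $O$ is reduced and $SL(O)=SL_2\ZZ$. In particular the generators $T=\begin{bmatrix}1&1\\0&1\end{bmatrix}$ and $R=\begin{bmatrix}1&0\\1&1\end{bmatrix}$, and the order-four element $S=\begin{bmatrix}0&-1\\1&0\end{bmatrix}$, are all derivatives of affine automorphisms of $O$. I would fix once and for all the finite list of separatrix diagrams for $\calH(1,1)$ recorded in the Appendix of \cite{shrestha} (there are four), and rule out each one.

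First I would record what each generator buys us. Since $S\in SL(O)$, the affine automorphism with derivative $S$ carries the horizontal cylinder decomposition to the vertical one; hence the two decompositions realize the \emph{same} separatrix diagram, with the horizontal circumferences and heights exchanged with the vertical heights and circumferences. Moreover, because $SL_2\ZZ$ has a single cusp and acts transitively on rational directions, every periodic direction is $SL(O)$-equivalent to the horizontal one, so $O$ in fact presents the same separatrix diagram in every periodic direction. Next, for each diagram I would parametrize $O$ by the circumferences $\ell_i$, heights $h_i$, and twists $t_i$ of its horizontal cylinders (using the parametrization of Appendix \ref{appendix:parametrization}), subject to $\sum_i \ell_i h_i=n$, to the stratum condition that the boundary identifications produce exactly two distinct simple zeros, and to reducedness, i.e. that $\Hol(O)$ generates $\ZZ^2$. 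The conditions $T\in SL(O)$ and $R\in SL(O)$ then assert that the unit horizontal shear and unit vertical shear are each realized by an affine automorphism, and I would translate these into explicit compatibility conditions on the tuples $(\ell_i,h_i,t_i)$ by comparing the sheared gluing data with the original up to relabeling.

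With this setup I would treat the four diagrams in turn. In each case the combination of (i) $S$ forcing the horizontal and vertical data to agree, (ii) $T$ and $R$ forcing the respective unit shears to be automorphisms, and (iii) reducedness, should leave no surface genuinely in $\calH(1,1)$: the parameters are forced either to merge the two zeros into one (landing in $\calH(2)$, already handled), or to degenerate a circumference or the period lattice (failing reducedness), or to leave some required shear unrealizable (so that $SL(O)\subsetneq SL_2\ZZ$). The main obstacle is condition (ii): as the Ornithorynque of Figure \ref{fig:2sym} shows, a unit shear can be an affine automorphism even when no individual cylinder has integral modulus, because a translation automorphism of $O$ may realize a fractional Dehn twist. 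Thus $T\in SL(O)$ is strictly weaker than ``every horizontal cylinder has integral modulus'' and must be checked against the actual combinatorics of each diagram rather than by a naive modulus count. The crux, then, is to show that the group of translation automorphisms available to a genus-two surface in $\calH(1,1)$ is too small to realize the unit shears demanded by $SL_2\ZZ$ in any of the four diagrams; quantifying this---for instance by bounding the order of such an automorphism through the way it must permute the two zeros and the cylinders of a fixed diagram---is where the real work lies.
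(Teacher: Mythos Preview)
Your outline follows the paper's strategy almost exactly: a case analysis over the four separatrix diagrams of $\calH(1,1)$, using that $T$ and $S$ must both lie in $SL(O)$. But as written it is only a plan, not a proof---none of the four cases is actually carried out, and you explicitly flag the crux (the possibility of fractional Dehn twists, as in the Ornithorynque) as ``where the real work lies'' without doing it.

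That worry is precisely what the paper dispatches, and it does so more cheaply than you anticipate. The key observation you are missing is that, for each of the multi-cylinder diagrams $B$, $C$, $D$ in $\calH(1,1)$, the parametrization by $(\ell_i,h_i,t_i)$ recorded in Appendix~\ref{appendix:parametrization} is \emph{unique}: the horizontal cylinders are combinatorially distinguished by how they attach to the separatrices, so no translation automorphism of $O$ can permute them. Hence $T\cdot O=O$ really does force each circumference to divide the corresponding height (e.g.\ in type $B$, $(k+l+m)\mid p$ and $m\mid q$), and the Ornithorynque mechanism is simply unavailable in genus two. Once you have these divisibility conditions, the contradiction with $S\in SL(O)$ is quick: $S$ forces the vertical decomposition to have the same combinatorial type and the same cylinder-length data, but every vertical saddle connection or cylinder has length bounded below by combinations of the $h_i$, which now dominate the $\ell_i$. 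For the one-cylinder case $A$ the paper bypasses the modulus argument entirely and just exhibits a saddle connection present in $O$ but absent in $T\cdot O$. So the ``real work'' you defer is in fact short; what you need to add is the uniqueness-of-parametrization observation that kills the fractional-twist obstruction, and then the four cases close in a line or two each.
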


\begin{proof}
A surface in $\calH(1,1)$ has one of 4 separatrix diagrams, as shown in Figure \ref{fig:H11surfaces}.  We will handle these 4 cases separately. Let $T = \begin{bmatrix}1 & 1 \\ 0 & 1\end{bmatrix}$ and $S = \begin{bmatrix}0 & -1 \\ 1 & 0\end{bmatrix}$. We will show that given a reduced $O \in \calH(1,1)$, both $T$ and $S$ cannot simultaneoulsy fix $O$. We will do the analysis case wise, by separatrix diagram.

Let $O \in \calH(1,1)$ with separatrix diagram A (i.e. $O$ has one horizontal cylinder), parametrized by $(p, j, k, l, m, \alpha)$ as in Figure \ref{fig:H11surfaces}. For $O$ to be reduced, the height of the cylinder, $p$,  needs to be 1. Without  loss of generality, assume the shear parameter $\alpha$ is 0, since, the the surface defined by $(1, j, k, l, m, 0)$, is in the $SL(2, \ZZ)$. Now that $\alpha = 0$, $O$ has a vertical closed saddle connection, and $T \sdot O$ has parameters $(1, j, k, l, m, 1)$ and does not have such a saddle connection unless $k -m = 1$. But in that case, $T \sdot O$ has closed saddle connections with holonomy $(1,1)$ whereas $O$ does not have such closed saddle connections. So, $T \sdot O \neq O$.

Suppose now $O$ has separatrix diagram B and is parametrized by $(p, q, k, l,m, \alpha, \beta)$. Since this set of $p, q, k, l, m, \alpha, \beta \in \NN$ such that $p (k+l+m) + q m = n$, $0 \leq \alpha < n$ and $0 \leq \beta < n$ uniquely parametrizes surfaces in $\STS_n \cap \calH(1,1)$ with separatrix diagram $B$, if $T\sdot O = 0$, then length of the horizontal cylinders must divide the height of the cylinders $(k+l+m) | p$ and $m | q$. Then, assume $S \in SL(O)$ which implies that $O$ has vertical curves which are core curves of cylinders of length $m$ and $k+l+m$. However, any vertical cylinder in $O$ must have length $xp + y(p+q)$ for some $x, y \geq 0$. Hence, $k+l+m = (x+y)p + y q$ for some $x, y \geq 0$ integers. This is a contradiction as $k+l+m | p$.

Now suppose that $O$ has separatrix diagram $C$, and is parametrized by $(p, q, k, l, m, \alpha$, $ \beta)$. Once again, if $T \in SL(O)$ then $(k+l) | p$ and $(l+m) | q$. Moreover, $O$ has a horizontal saddle connection of length $l$ which is shared by the two horizontal cylinders. Then assume $S \in SL(O)$,which implies $O$ must also have a vertical saddle connection of length $l$. However, any vertical saddle connection of $O$ must have length at least $\min \{ p, q\}$, a contradiction since $(l+k) |p$ and $(l+m) | q$.

Finally, suppose that $O$ has separatrix diagram $D$, and is parametrized by $(p, q, r, k, l $, $\alpha, \beta, \gamma)$. If $T \in SL(O)$,then $k |p$, $(l+k)|q$ and $l | r$. Further, assume $S \in SL(O)$, so that $O$ must have a vertical cylinders of length $k$. But any vertical curve that is the core curve of a cylinder has length $ k= x(p+q) + y (q+r)$ for $x, y \geq 0$ integers. But as $k |p$, we have $p = ik$ for some $i \geq 0$ integer, so that $p = ix(p+q) + iy (q+r)$. This implies $x = 0$. Hence, $(q+r) | p$. By a symmetric argument, $(p+q) | r$. This is only possible if $p = r$. However, as $O$ is reduced, $\gcd(p+q, q+r )= 1$, which implies that $\gcd(r+q, q+r)= 1$, and therefore $q+r = 1$, which is a contradiction.
\end{proof}

Hence, as there are no symmetry tori in the two strata that make up genus two, we conclude:

\symmetrytori

\section{Counting Surfaces with Unit Saddles}
\label{sec:unitloops}

In this section, we investigate a combinatorial question about the proportion of square-tiled surfaces in $\mathcal{H}(2)$ that have a short saddle connection. We hope that the methods used in this investigation can be adapted to work on other counting problems. 

We define a {\bf unit saddle} as a saddle connection of length one, and we denote the collection of translation surfaces with unit saddles as $\Unit$. In this section, we will show that a random square-tiled surface in $\calH(2)$ has no unit saddle, asymptotically almost surely. In light of Proposition \ref{prop:altunobs}, we note that having a unit saddle is a weaker property than being visibility, since now we just require the surface in question, and not necessarily its entire $SL_2(\ZZ)$ orbit, to have a unit saddle.

Let us first review some asymptotic notations. We say that $f(n)$ is $O(g(n))$ if there exists a constant $C$ and $n_0 \in \NN$ for which $$f(n) \leq C \sdot g(n)$$ for all $n \geq n_0$. We say that $f(n)$ is $\Omega(g(n))$ if there exists a constant $c$ and $n_0 \in \NN$ for which $$ c \sdot g(n) \leq f(n)$$ for all $n \geq n_0$. A function is $\Theta(g(n))$ if it is both $O(g(n))$ and $\Omega(g(n))$. Our main result in this section is the following theorem. 

\total

In this section, we will first present a heuristic argument for why one should expect the asymptotics given in this theorem, as well as some experimental support. We will follow this with a combinatorial proof of the theorem. 

\subsection{Heuristics and experimental support} 

We will first quickly review some background on volumes of strata of translation surfaces. For a more extensive introduction, see \cite{zorich}. Square-tiled surfaces play the role of integer points in a strata $\calH(\alpha)$, and as such, the counts of square-tiled surfaces are intimately related to the volumes and dimensions of these strata. We recall that a translation surface in a given stratum $\calH(\alpha)$ is represented by a pair $(X,\omega)$ where $\omega$ has the degrees of its zeros prescribed by $\alpha$. Integrating $\omega$ along paths connecting the zeros of $\omega$ gives its \textbf{relative periods}, and locally gives a map between $\calH(\alpha)$ and an open subset of the vector space $H^1(\Sigma, \{p_1,\ldots,p_n\}; \CC)$, where $\Sigma$ is the underlying topological surfaces and the $p_i$'s are the locations of the zeros of the one-form. 

There is a natural integer lattice $H^1(\Sigma, \{p_1,\ldots,p_n\}; \ZZ \oplus i \ZZ) \subset H^1(\Sigma, \{p_1,\ldots,p_n\}; \CC).$ The translation surfaces whose period maps land on these integer points are square-tiled surfaces, the integer points of the strata. Normalizing so that the volume of a unit cube is one, there is a natural volume form $d\mu$ on $\calH(\alpha)$ inherited from period coordinates. If we let $S(X,\omega)$ be the volume of the translation surface $(X,\omega)$, the volume form $d\mu$ then induces a volume form $d\mu_1 = d \mu / d S$ on $\calH_1(\alpha)$, the space of volume one translation surfaces in the stratum. It is a theorem of Masur (\cite{masur}) and Veech (\cite{veech2}) that the volumes of $\calH_1(\alpha)$ with respect to $d\mu_1$ are finite. 

Having set up this background, we can how give a heuristic argument for the asymptotics given in Theorem \ref{total}. First, since $\calH(2)$ is a $4$-dimensional space, if we let $v(k)$ denote the number of square-tiled surfaces in $\calH(2)$ with $\leq k$ squares, we should find that $v(k) \sim ck^4$ for some constant $c$. Since $v(k)$ is the total number of $n$-square STSs for all $n \leq k$, we expect the count of $|\STS_n \cap \calH(2)|$ to be on the order of $n^3$, since the derivative of $v(k)$ at $n$ is on the order of $n^3$. 

Furthermore, rescaling a square-tiled surface with $n$ squares to be area one results in a surface tiled with squares of side length $1/\sqrt{n}$  in $\calH_1(2)$. Thus, surfaces in $\STS_n \cap \calH(2)$ form finer and finer mesh grids of $\calH_1(2)$ as $n \rightarrow \infty$. Those square-tiled surfaces that have a unit saddle reside in the cusps of $\calH_1(2)$ of surfaces with shortest saddle connection length $\leq 1/\sqrt{n}$. Since the areas of these cusps go to zero as $n$ goes to infinity, and we expect square-tiled surfaces to equidistribute in $\calH_1(2)$, it then follows that the proportion of square-tiled surfaces in $\calH(2)$ with a unit saddle should be asymptotically almost surely $0$. 

Our result is also supported by experimental evidence. Using the Origami database in the {\sf Surface Dynamics} Sage package (\cite{surfacedyn}), we computed the proportion of reduced square-tiled surfaces with a unit saddle in $\calH(2)$ for up to $55$ squares. Figure~\ref{reduced-plot} shows a plot of those proportions and their reciprocals, suggesting a stronger result than is entailed by
Theorem~\ref{total}:  the proportion seems to be reciprocal-linear, not just bounded above and below by reciprocal linear functions as the $\Theta(1/n)$ asymptotic would suggest.

\begin{figure}[ht]
\centering
\includegraphics[width=.45\textwidth]{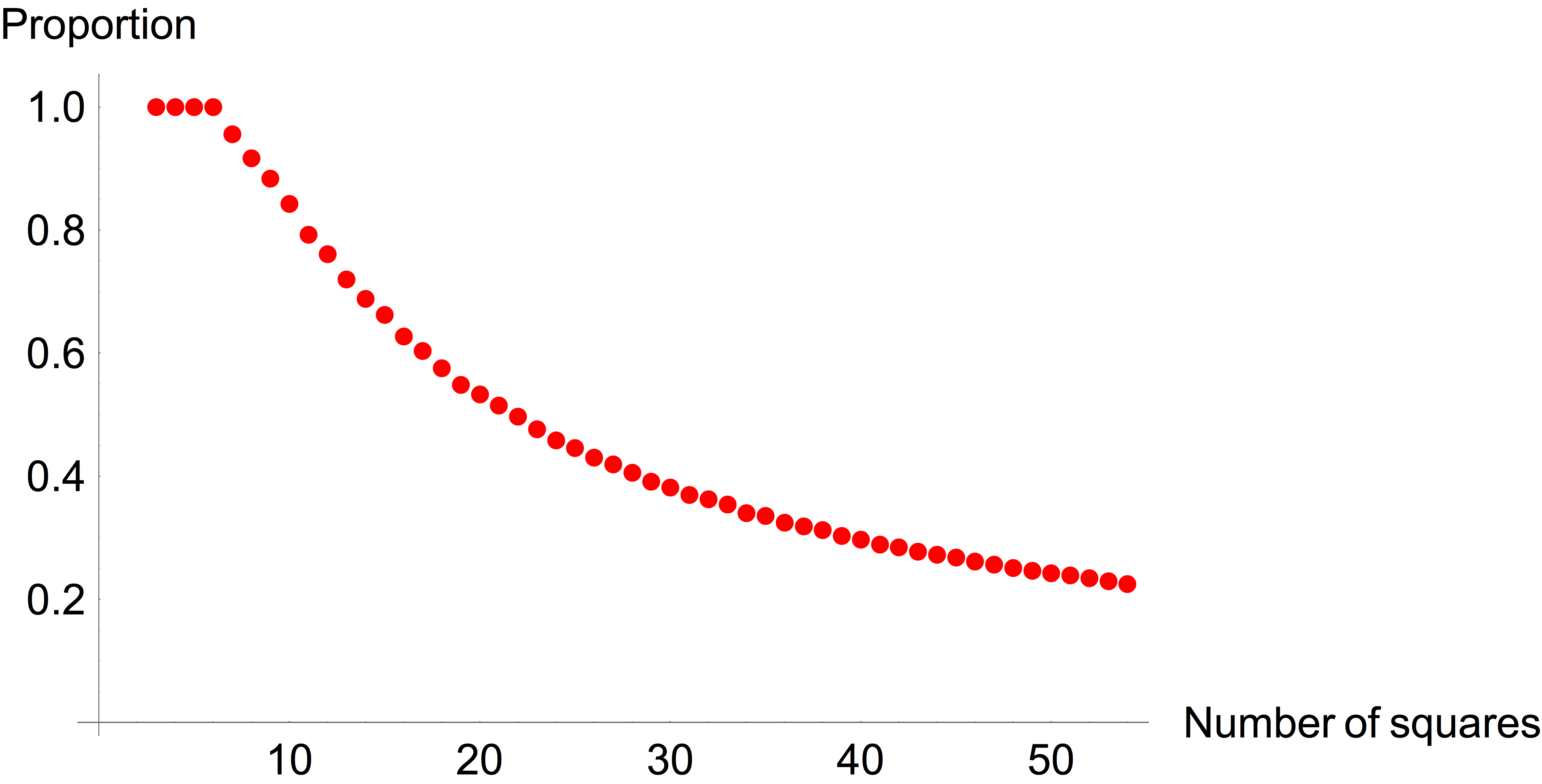} \quad
\includegraphics[width=.45\textwidth]{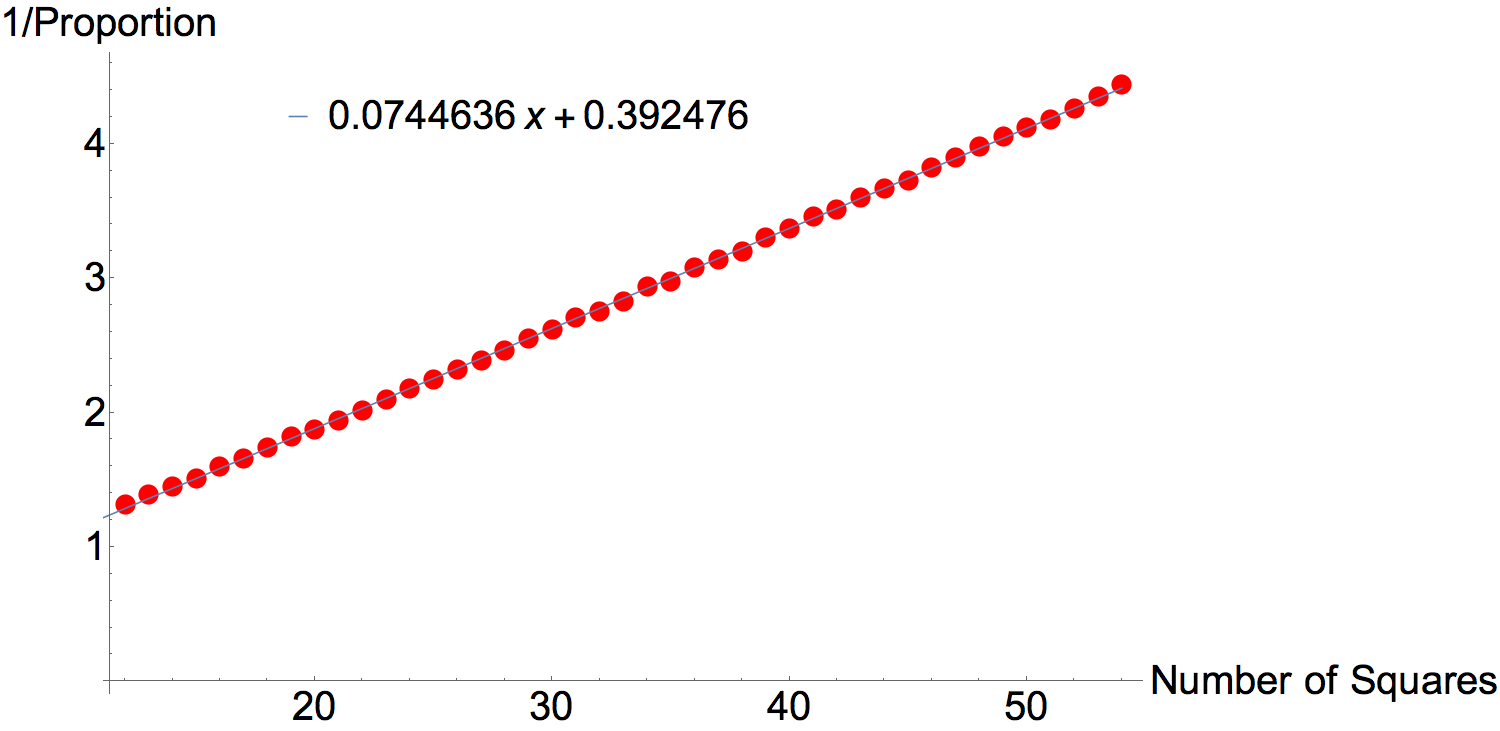}
\caption{Left: The proportion of STSs with a unit saddle in $\calH(2)$ decreases as $n$ increases. Right: The reciprocal of the proportion of STSs with a unit saddle seems to grow linearly with $n$. The equation of the line of best fit is shown.  }
\label{reduced-plot}
\end{figure}


%
%
%

\subsection{Total counts of surfaces in $\calH(2)$}

Recall that in $\calH(2)$, a square-tiled surface can either have one horizontal cylinder or two.  (Of course the same analysis applies to vertical cylinders,
but we will consider horizontal cylinders as a convention in this section.) We note here that the following asymptotics can be deduced from Zorich's work \cite{zorich} where he computes the asymptotics for the number of STSs upto $n$ squares. Here we present an alternate treatment by computing the asymptotics of the number of STSs with exactly $n$ squares.

Let us first estimate the number of $n$-square one-cylinder surfaces in $\calH(2)$.

Our estimate will involve the sigma  function $\sigma_x(n) = \sum\limits_{d | n} d^x$. The following is a useful observation. 

\begin{proposition} \label{sigma}
For $x > 1$, $\sigma_x(n)$ is $\Theta(n^x)$. 
\end{proposition}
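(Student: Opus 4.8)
The plan is to sandwich $\sigma_x(n)$ between two constant multiples of $n^x$, establishing the $\Omega(n^x)$ and $O(n^x)$ bounds separately.

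For the lower bound, I would observe that $n$ is itself a divisor of $n$, so the term $n^x$ occurs in the sum defining $\sigma_x(n)$. Since every term is positive, this gives $\sigma_x(n) \geq n^x$ for all $n$, which already shows that $\sigma_x(n)$ is $\Omega(n^x)$ with constant $c = 1$. Note this part holds for every real $x$, not just for $x > 1$.

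For the upper bound, the key step is the standard reindexing of the divisor sum: as $d$ ranges over the divisors of $n$, so does $e := n/d$. Factoring out $n^x$, I would write
$$\sigma_x(n) = \sum_{d \mid n} d^x = n^x \sum_{d \mid n} \left(\frac{d}{n}\right)^x = n^x \sum_{e \mid n} \frac{1}{e^x}.$$
I would then bound the final sum by extending it over all positive integers, $\sum_{e \mid n} e^{-x} \leq \sum_{e=1}^\infty e^{-x} = \zeta(x)$. This is the one place where the hypothesis $x > 1$ is genuinely used: the series $\sum_{e \geq 1} e^{-x}$ converges exactly when $x > 1$, and its sum $\zeta(x)$ is a finite constant independent of $n$. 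Hence $\sigma_x(n) \leq \zeta(x)\, n^x$, so $\sigma_x(n)$ is $O(n^x)$.

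Combining the two bounds gives $n^x \leq \sigma_x(n) \leq \zeta(x)\, n^x$, and therefore $\sigma_x(n)$ is $\Theta(n^x)$. There is no serious obstacle in this argument; the only point requiring care is that the uniform upper constant arises from convergence of the zeta series, which is precisely why the statement assumes $x > 1$ rather than merely $x > 0$.
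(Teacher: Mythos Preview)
Your proof is correct and essentially identical to the paper's own argument: the lower bound via the single term $n^x$, and the upper bound via the reindexing $d \mapsto n/d$ followed by comparison with the convergent series $\zeta(x)$. The only cosmetic difference is that the paper passes through the partial sum $\sum_{k=1}^n k^{-x}$ before invoking $\zeta(x)$, whereas you compare directly to the full series.
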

\begin{proof} We want to show that $\sigma_x(n)$ is upper and lower bounded by a constant times $n^x$. For the lower bound, we have that $$\sigma_x(n) = \sum_{d|n} d^x \geq n^x.$$ For the upper bound, we have that $$\sigma_x(n) = \sum_{d|n} d^x \leq \sum_{k=1}^n \left(\frac{n}{k}\right)^x = n^x \sum_{k=1}^n \left(\frac{1}{k^x}\right)  \leq \zeta(x) n^x.$$
For $x > 1$, $\zeta(x)$ is finite and this gives us our upper bound. 
\end{proof} 

We return now to counting one-cylinder surfaces in $\calH(2)$. We refer again to Figure \ref{fig:H2surfaces}. Given that the squares in our square-tiled surface have unit side lengths, the length of the base is $k+l+m$, the height is $p$, and the shear $\alpha \in \{0,1,\ldots, k+l + m - 1\}$. The left side is identified to the right, and the top and bottom edges are broken up into three sub-edges with $h$ identified to $h$, and so on. Using the parameters, we get the following lemma.

\begin{lemma} 
\label{1cyltotal}
The number of $n$-square one-cylinder surfaces in $\calH(2)$ is $\Theta(n^3)$. 
\end{lemma}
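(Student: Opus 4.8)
The plan is to count the one-cylinder surfaces directly from their combinatorial parameters and then recognize the resulting expression as a divisor sum that Proposition \ref{sigma} already controls. Using the parametrization from Figure \ref{fig:H2surfaces} (see also Appendix \ref{appendix:parametrization}), a one-cylinder $n$-square surface in $\calH(2)$ is determined by a height $p \geq 1$, the three sub-edge lengths $k, l, m \geq 1$ along the base, and a shear $\alpha$ with $0 \leq \alpha < k+l+m$. Writing $w = k+l+m$ for the circumference of the cylinder, the total number of squares is $n = pw$, so $w$ must be a divisor of $n$ and $p = n/w$ is then forced.

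First I would fix a divisor $w \mid n$ and count the surfaces of that circumference. The lengths $(k,l,m)$ range over ordered triples of positive integers summing to $w$, of which there are $\binom{w-1}{2}$ by stars and bars, and independently the shear $\alpha$ takes $w$ values. Hence the number of one-cylinder $n$-square surfaces in $\calH(2)$ is
$$ \sum_{w \mid n} \binom{w-1}{2}\, w. $$
Only $w \geq 3$ contribute, since $\binom{w-1}{2} = 0$ for $w \leq 2$; this reflects that a base of length at least three is needed to realize the single order-two cone point.

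It then remains to show that this divisor sum is $\Theta(n^3)$, which is an immediate consequence of Proposition \ref{sigma}. For the upper bound I would estimate $\binom{w-1}{2}\, w = \tfrac{1}{2}(w-1)(w-2)\,w \leq \tfrac{1}{2} w^3$, so that the total is at most $\tfrac{1}{2}\sum_{w \mid n} w^3 = \tfrac{1}{2}\sigma_3(n)$, which is $O(n^3)$ because $3 > 1$. For the lower bound, every summand is nonnegative, so the sum is at least the single term coming from $w = n$, namely $\binom{n-1}{2}\, n = \tfrac{1}{2} n(n-1)(n-2) = \Omega(n^3)$. Combining the two bounds yields $\Theta(n^3)$.

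The only delicate point lies in the bookkeeping of the first two steps: one must check that the parametrization $(p,k,l,m,\alpha)$ enumerates each one-cylinder surface in $\calH(2)$ exactly once, up to the bounded ambiguity introduced by surface automorphisms (which cannot change the order of growth), and that every admissible tuple genuinely yields a connected surface with a single order-two zero. Both facts are precisely what the separatrix-diagram analysis underlying Figure \ref{fig:H2surfaces} supplies. Once the count is pinned to the divisor sum, the asymptotic is routine, so the combinatorial identification of the parameters is the main---and essentially only---obstacle.
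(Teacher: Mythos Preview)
Your proof is correct and follows essentially the same approach as the paper: both parametrize one-cylinder $\calH(2)$ surfaces by $(p,k,l,m,\alpha)$, reduce the count to a divisor sum comparable to $\sigma_3(n)$, and invoke Proposition~\ref{sigma}. One small point worth sharpening: the bounded overcounting you flag is not really caused by \emph{surface automorphisms} but by redundancy in the parametrization itself---cyclically relabeling which of the three bottom saddle connections is ``leftmost'' (and adjusting $\alpha$ accordingly) represents the same surface, so each surface appears at most three times among your tuples. The paper handles this inline by arguing that for each unordered $\{k,l,m\}$ the number of inequivalent shears lies between $r/6$ and $r$; your closed-form $\sum_{w\mid n}\binom{w-1}{2}\,w$ together with the $w=n$ lower bound is a slightly cleaner packaging of the same estimate.
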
 

\begin{proof}
For each $r|n$ such that $r>2$, there are a positive number of square-tiled surfaces with horizontal saddle connections of length $k,l,m$ as in Figure \ref{fig:H2surfaces}, with $k+l+m = r$. We can estimate this number by noting that  for any $r$, there are $O(r^2)$ ways to partition $r$ into three numbers $k+l+m$, where order doesn't matter. This is because there are $\binom{r-1}{2}$ ways to write $r= k+l+m$ where order does matter, and then each triple $(k,l,m)$ has at most six possible reorderings. For each $k+l+m=r$, we can estimate the number of distinct shears $\alpha$. 

We know that there are at most $r$ distinct shears $\alpha$. Furthermore, for any shear $\alpha$ that gives an equivalent surface to the $\alpha = 0$ shear, we must have that a saddle connection vector of $(0,p)$ exists. But there are three singularities on the top edge of the cylinder, and three on the bottom, so there are at most $9$ different ways for which a top singularity to line up with a bottom singularity. Three of those pairings occur for $\alpha = 0$. So there are at most $6$ other shears that are equivalent to the $\alpha = 0$ shear. This gives us a coarse bound of at least $r/6$ distinct shears. (In fact, with a little more work, we can show that there are $r/3$ distinct shears when $k=l=m$ and $r$ distinct shears otherwise.)

This gives us $O(r)$ different $\alpha$ values. Once $r$ is given, $p = n/r$ is determined. In total, once we sum over all $r |n$, we have $\Theta(\sigma_3(n))$ distinct $n$-square one-cylinder surfaces, which is also $\Theta(n^3)$ by Proposition \ref{sigma}. 
\end{proof}

Now, we turn our attention to two-cylinder surfaces. We can first explicitly compute the total number of surfaces of this type. 

\begin{lemma} 
\label{2cyltotal}
The number of two-cylinder surfaces in $\STS_n \cap \calH(2)$ is $\frac{5}{24} \sigma_3(n) + \frac{1}{2} \sigma_2(n) - \frac{3}{4} n \sigma_1(n) + \frac{1}{24} \sigma_1(n)$.
\end{lemma}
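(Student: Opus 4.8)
The plan is to parametrize the two-cylinder surfaces explicitly and then reduce the count to a convolution of divisor sums. Following the parametrization recorded in the Appendix (and Figure~\ref{fig:H2surfaces}), a two-cylinder surface in $\calH(2)$ carries three horizontal saddle connections: the smaller cylinder, of circumference $c_1$ and height $h_1$, is bounded above and below by two saddle connections of common length $c_1$, while the larger cylinder, of circumference $c_2$ and height $h_2$, wraps around the remaining saddle connection, of length $c_2 - c_1$. Thus such a surface is pinned down by the data $(c_1, c_2, h_1, h_2, t_1, t_2)$ with $1 \le c_1 < c_2$, heights $h_1, h_2 \ge 1$, twists $t_i \in \{0, \dots, c_i - 1\}$, and the area constraint $n = c_1 h_1 + c_2 h_2$. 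First I would verify that this assignment is a bijection onto the set of two-cylinder surfaces, so that
$$ \#\{\text{two-cylinder surfaces in }\STS_n \cap \calH(2)\} = \sum_{\substack{c_1 h_1 + c_2 h_2 = n \\ 1 \le c_1 < c_2}} c_1 c_2, $$
the sum being over all $c_1, c_2, h_1, h_2 \ge 1$ satisfying the two constraints.

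Next I would reindex by the cylinder areas $A_1 = c_1 h_1$ and $A_2 = c_2 h_2$, so that $A_1 + A_2 = n$ and, for fixed $A_i \ge 1$, the circumference $c_i$ ranges exactly over the divisors of $A_i$. Writing $\sigma_1(A) = \sum_{c \mid A} c$, the unordered (i.e.\ unconstrained) double sum is the convolution $S := \sum_{A_1 + A_2 = n} \sigma_1(A_1)\,\sigma_1(A_2)$. Since the summand $c_1 c_2$ is symmetric under exchanging the two labeled cylinders, and this exchange swaps the regions $c_1 < c_2$ and $c_1 > c_2$ while fixing the diagonal $c_1 = c_2$, the constrained sum equals $\tfrac12(S - D)$, where $D$ is the diagonal contribution. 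A direct computation gives $D = \sum_{c \mid n} c^2 \,(n/c - 1) = n\,\sigma_1(n) - \sigma_2(n)$, because $c \mid A_1$, $c \mid A_2$, and $A_1 + A_2 = n$ force $c \mid n$ and then leave $n/c - 1$ admissible pairs $(A_1, A_2)$.

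Finally I would invoke the classical Eisenstein-series convolution identity $S = \tfrac{5}{12}\sigma_3(n) + \big(\tfrac1{12} - \tfrac n2\big)\sigma_1(n)$ (provable by comparing $q$-expansions of $E_2^2$ and $E_4$, or by an elementary lattice-point argument). Substituting $S$ and $D$ into $\tfrac12(S - D)$ and collecting terms produces exactly $\tfrac{5}{24}\sigma_3(n) + \tfrac12\sigma_2(n) - \tfrac34 n\,\sigma_1(n) + \tfrac1{24}\sigma_1(n)$, as asserted. The main obstacle is the first step: establishing the combinatorial model precisely enough to be certain that the twist count is exactly $c_1 c_2$ and that the only identification among parameter tuples is the cylinder relabeling encoded by imposing $c_1 < c_2$ --- in particular, ruling out any residual automorphism that would introduce a Burnside-type correction and perturb the lower-order coefficients. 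Once the parametrization and its bijectivity are secured, the rest is the elementary divisor-sum manipulation above together with the cited identity.
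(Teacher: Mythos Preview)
Your proposal is correct and follows essentially the same route as the paper's proof: both set up the count as $\sum c_1 c_2$ over the parametrization with $c_1<c_2$ (the paper's $l<k$), symmetrize to get $\tfrac12(S-D)$ with $S=\sum_{A_1+A_2=n}\sigma_1(A_1)\sigma_1(A_2)$ and $D=n\sigma_1(n)-\sigma_2(n)$, and then invoke the Ramanujan convolution identity. The only difference is cosmetic (variable names and your explicit flagging of the bijectivity check, which the paper offloads to the Appendix parametrization).
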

\begin{proof}
The surfaces we are counting are uniquely described by their 6-tuples $(p,q,k,l,\alpha, \beta)$ with $p, q \in \NN$, $k > l \in \NN$ and $\alpha \in \{0, \dots, k-1\}$ and $\beta\in \{0, \dots, l-1\}$ such that 
$pk + ql = n$ as in the figure. 
We compute:
$$\sum_{\substack{p, q, k,l \in \NN \\  k > l \\ pk + ql = n}}\sum_{\alpha = 0}^{k-1}\sum_{\beta = 0}^{l-1} 1 = \sum_{\substack{p, q, k,l \in \NN \\  k > l \\ pk + ql = n}} kl $$
We proceed using symmetry between $k$ and $l$ and the substitution $P=pk$, $Q=ql$.
\begin{align*}
\sum_{\substack{p, q, k,l \in \NN \\  k >l \\ pk + ql = n}} kl & = \frac{1}{2}\left(\sum_{\substack{p, q, k,l \in \NN \\ pk + ql = n}} kl - \sum_{\substack{p, q, k \in \NN \\ (p + q)k = n}} k^2  \right) = \frac{1}{2}\left(\sum_{\substack{P, Q \in \NN\\ P + Q = n }}\sum_{k|P}k \sum_{l | Q}l - \sum_{k| n}k^2\left(\frac{n}{k}-1\right)\right)\\
&= \frac{1}{2}\left(\sum_{P = 1}^{n-1} \left(\sigma_1(P) \sigma_1(n-P)\right) - n\sdot \sigma_1(n) + \sigma_2(n)\right).
\end{align*}
From a well-known identity due to Ramanujan (\cite{ramanujan}), we have that $$\sum_{P = 1}^{n-1} \sigma_1(P) \sigma_1(n-P) = \frac{5}{12}\sigma_3(n) + \frac{1}{12}\sigma_1(n) - \frac{1}{2}n \sigma_1(n).$$
Hence, our sum becomes
$$\sum_{\substack{p, q, k,l \in \NN \\  k > l \\ pk + ql = n}} kl = \frac{5}{24}\sigma_3(n) + \frac{1}{2}\sigma_2(n) + \frac{1}{24}\sigma_1(n) - \frac{3}{4}n\sigma_1(n).$$\end{proof}

These two results together give us that the number of $n$-square surfaces in $\calH(2)$ is $\Theta(n^3)$.

\subsection{Unit saddles in $\calH(2)$}

In this section, we will prove that the number of $n$-square surfaces in $\calH(2)$ with a unit saddle is $\Theta(n^2)$. We will show this by first proving that the number of $n$-square surfaces in $\calH(2)$, either with one cylinder or with two cylinders, that have unit-length horizontal saddle connection is $\Theta(n^2)$. By symmetry, the number of $n$-square surfaces in $\calH(2)$ that have a unit-length 
vertical saddle connection is also $\Theta(n^2)$. These two asymptotics together give us our desired asymptotic for the number of $n$-square surfaces in $\calH(2)$ with a unit-length horizontal saddle connection. 

\begin{lemma}
\label{1cylunit} The number of $n$-square one-cylinder surfaces in $\calH(2)$ with a unit-length horizontal saddle connection is $\Theta(n^2)$.
\end{lemma}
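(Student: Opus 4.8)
The plan is to reuse the parametrization of one-cylinder surfaces in $\calH(2)$ from the proof of Lemma \ref{1cyltotal} and simply impose the extra condition that guarantees a unit horizontal saddle connection. Recall that such a surface is determined by a base length $r = k+l+m$, a height $p$ with $pr = n$, an unordered partition of $r$ into the three horizontal saddle-connection lengths $k, l, m$ (see Figure \ref{fig:H2surfaces}), and a shear $\alpha$. The horizontal saddle connections are exactly the three boundary segments of lengths $k, l, m$, so the surface has a unit-length horizontal saddle connection if and only if $\min\{k,l,m\} = 1$. Thus I must count the parametrized surfaces subject to this condition and show the total is $\Theta(n^2)$.

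For the upper bound I would fix a divisor $r \mid n$ and count partitions $r = k+l+m$ with at least one part equal to $1$. Fixing, say, $k = 1$ leaves $l+m = r-1$, which has $O(r)$ solutions, so there are $O(r)$ such (unordered) partitions. As in Lemma \ref{1cyltotal}, each partition contributes $O(r)$ distinct shears, giving $O(r^2)$ surfaces for each $r \mid n$. Summing over divisors yields $\sum_{r \mid n} O(r^2) = O(\sigma_2(n))$, which is $O(n^2)$ by Proposition \ref{sigma}.

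For the lower bound it suffices to restrict to the single divisor $r = n$, i.e. $p = 1$: a single horizontal cylinder of height one and base $n$. There are $\Omega(n)$ unordered partitions $n = 1 + l + m$, and since no partition containing a $1$ can have $k = l = m$ once $n > 3$, the count in Lemma \ref{1cyltotal} gives the clean value of $\Theta(r) = \Theta(n)$ genuinely distinct shears for each. This already produces $\Omega(n^2)$ distinct surfaces with a unit horizontal saddle connection, matching the upper bound and giving $\Theta(n^2)$.

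The main point requiring care is the bookkeeping of equivalences: I must ensure that distinct tuples $(k,l,m,\alpha)$ with a unit part really do give non-isomorphic surfaces, so that the lower-bound count is not inflated, and, conversely, that the shear count borrowed from Lemma \ref{1cyltotal} correctly discards shears producing equivalent surfaces. Because partitions containing a $1$ avoid the exceptional case $k=l=m$, the ``$r$ distinct shears'' count applies directly, and the argument goes through without the subtleties present in the general one-cylinder count.
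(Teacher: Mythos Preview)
Your proposal is correct and follows essentially the same approach as the paper: both impose the condition $\min\{k,l,m\}=1$ on the one-cylinder parametrization, count $\Theta(r)$ admissible partitions and $\Theta(r)$ shears for each base length $r\mid n$, and invoke $\sigma_2(n)=\Theta(n^2)$. The only difference is cosmetic: the paper obtains the lower bound by summing the per-divisor $\Theta(r^2)$ counts to get $\Theta(\sigma_2(n))$ directly, whereas you extract $\Omega(n^2)$ from the single divisor $r=n$; both are valid and equally elementary.
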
 

\begin{proof}
In this proof, $k,l,m$ are the horizontal saddle connection lengths, $p$ is the height, and $\alpha$ is the shear, as in Figure \ref{fig:H2surfaces}. For any $r|n$ with $r \geq 3$, it is possible to have a unit-length horizontal saddle connection. The shear $\alpha$ can be anything but we need at least one of $k, l, m$ to be $1$. The number of ways for $k+l+m=r$ where each of $k,l,m \geq 2$ is equal to $\binom{r-4}{2}$. Therefore, there are $\binom{r-1}{2} - \binom{r-4}{2} = \Theta(r)$ ways of choosing $k,l,m$ with at least one being $1$. Accounting for our choice for $\alpha$ as well, we get $\Theta(r^2)$ surfaces with a unit-length horizontal saddle connection. 

Note that all asymptotics computed here are in terms of universal constants, independent of $r$ and $n$. Putting this all together, we see that the number of surfaces with a unit-length 
saddle connection is $\Theta(\sigma_2(n))$, which is also $\Theta(n^2)$ by Lemma \ref{sigma}.
\end{proof}

\begin{lemma} 
\label{2cylunit}
The number of $n$-square two-cylinder surfaces in $\calH(2)$ with a unit-length horizontal saddle connection is $\Theta(n^2)$. 
\end{lemma}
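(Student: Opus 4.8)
The plan is to reduce the lemma to an explicit lattice-point count using the parametrization of two-cylinder surfaces from Lemma \ref{2cyltotal}, and then to estimate that count. Recall that such a surface is described by a tuple $(p,q,k,l,\alpha,\beta)$ with $p,q\in\NN$, $k>l\ge 1$, $pk+ql=n$, and shears $\alpha\in\{0,\dots,k-1\}$, $\beta\in\{0,\dots,l-1\}$; summing over the $kl$ choices of shears, the number of surfaces with a fixed $(p,q,k,l)$ is $kl$. First I would read off from Figure \ref{fig:H2surfaces} that the three horizontal saddle connections of such a surface have lengths $l$, $l$, and $k-l$ (their total length $k+l$ matches the combined circumference of the two cylinder boundaries). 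Since the shears $\alpha,\beta$ act horizontally, they do not change these lengths, so a two-cylinder surface has a unit horizontal saddle connection if and only if $l=1$ or $k-l=1$, a condition on $(k,l)$ alone. Thus the quantity to estimate is
$$U(n)=\sum_{\substack{p,q,k,l\in\NN,\ k>l\\ pk+ql=n\\ l=1\ \text{or}\ k=l+1}} kl.$$

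Next I would split $U(n)$ by inclusion-exclusion into the case $l=1$, the case $k=l+1$, and their overlap $l=1,\,k=2$. The case $l=1$ gives $A(n)=\sum_{k=2}^{n-1}k\,\lfloor (n-1)/k\rfloor$, since the summand is $k$ and for each $k$ the number of admissible $p$ (with $q=n-pk\ge 1$) is $\lfloor (n-1)/k\rfloor$. Using $x/2\le\lfloor x\rfloor\le x$ for $x\ge 1$, one gets $\tfrac12(n-1)(n-2)\le A(n)\le (n-1)(n-2)$, so $A(n)=\Theta(n^2)$. The overlap term is $2\cdot\#\{(p,q):2p+q=n\}=O(n)$, which is negligible.

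The hard part will be the case $k=l+1$, where $B(n)=\sum (l+1)l$ over $p(l+1)+ql=n$: a careless bound (treating $l$ as ranging up to $n/2$ and allowing a term for each $l$) would only give $O(n^3)$, which is too large. The key idea is to reparametrize by $s=p+q$. The defining equation becomes $ls+p=n$, so $p=n-ls$ and $q=s(l+1)-n$; the positivity constraints $p\ge 1$, $q\ge 1$ are exactly $n+1-s\le sl\le n-1$, i.e. $l\in[(n+1-s)/s,\,(n-1)/s]$, an interval of length $(s-2)/s<1$. Hence for each $s$ there is \emph{at most one} admissible integer $l$, and it satisfies $l\le (n-1)/s$. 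Therefore
$$B(n)\le\sum_{s=1}^{n-1}\frac{n-1}{s}\Bigl(\frac{n-1}{s}+1\Bigr)\le (n-1)^2\sum_{s\ge 1}\frac{1}{s^2}+(n-1)\sum_{s=1}^{n-1}\frac1s=O(n^2).$$

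Combining, $U(n)=A(n)+B(n)-O(n)$ with $A(n)=\Theta(n^2)$ and $0\le B(n)=O(n^2)$; the lower bound $U(n)\ge A(n)-O(n)=\Omega(n^2)$ together with the upper bound $U(n)\le A(n)+B(n)=O(n^2)$ gives $U(n)=\Theta(n^2)$, as desired. I expect the two delicate points to be justifying the saddle-connection lengths $l,l,k-l$ from the separatrix diagram and carrying out the $s=p+q$ reparametrization that tames the otherwise cubic-looking sum $B(n)$.
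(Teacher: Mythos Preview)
Your proposal is correct and follows essentially the same approach as the paper: both identify the condition $l=1$ or $k-l=1$, split by inclusion--exclusion into three terms, show the $l=1$ term is $\Theta(n^2)$ via $k\lfloor(n-1)/k\rfloor\asymp n$, and control the $k=l+1$ term by reparametrizing with $s=p+q$. Your treatment of $B(n)$ is in fact slightly cleaner than the paper's: the paper first splits into $l\le\sqrt{n}$ and $l>\sqrt{n}$ and only applies the $s=p+q$ trick to the second range, whereas your observation that the admissible interval for $l$ has length $(s-2)/s<1$ works uniformly for every $s\ge 2$ and removes the need for that split.
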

\begin{proof} 

Recall that we can identify a two-cylinder surface by its cylinder widths $l$ and $k$, its shears $\beta$ and $\alpha$, and its cylinder heights $q$ and $p$, as in Figure \ref{fig:H2surfaces}. To have a unit-length horizontal saddle connection, our square-tiled surfaces must have either $l=1$ or $k-l = 1$. Then, given any shears $\alpha$ and $\beta$, the sheared square-tiled surface still have this unit-length horizontal saddle connection. The number of $n$-square two-cylinder surfaces with a unit-length horizontal saddle connection is then 
\begin{equation}
\label{eq:2cyl}
\sum_{\substack{pk + lq = n, k > l \\ l = 1 \text{ or }k-l =1}} kl = \sum_{pk + q = n, k > 1} k + \sum_{p(1+l) + lq = n} l(l+1) - \sum_{2p + q = n} 2.
\end{equation}

Let us examine each of the terms on the right hand side of equation \eqref{eq:2cyl}. The first term is $$\sum_{pk+q=n, k>1} k = \sum_{k=2}^n k \left\lfloor \frac{n}{k} \right\rfloor$$

Each term $k \left\lfloor \frac{n}{k} \right\rfloor$ is bounded above by $n$ and below by $n/2$, and so $$\frac{n(n-1)}{2} = \sum_{k=2}^n \frac{n}{2} \leq \sum_{k=2}^n k \left\lfloor \frac{n}{k} \right\rfloor \leq \sum_{k=2}^n n = n(n-1).$$

Therefore, the first term is $\Theta(n^2)$. 

We will now prove that the second term in equation \eqref{eq:2cyl} is $O(n^2)$. We first split the sum up as 
$$\sum_{p(1+l) + lq = n} l(l+1) = \sum_{l=1}^{\lfloor \sqrt{n} \rfloor} |\{(p,q) : p(l+1) + ql = n\}|l (l+1) + \sum_{l = \lfloor \sqrt{n} \rfloor + 1}^n |\{(p,q) : p(l+1) + ql = n\}| l(l+1).$$ 

For the first term, $|\{(p,q) : p(l+1) + ql = n\}|$ is bounded above by $\lfloor n/(l+1)\rfloor$, the number of possible positive $p$'s for which $p(l+1) \leq n$. Therefore, 
$$\sum_{l=1}^{\lfloor \sqrt{n} \rfloor} |\{(p,q) : p(l+1) + ql = n\}|l (l+1) \leq \sum_{l=1}^{\lfloor \sqrt{n} \rfloor} nl  =\frac{n\lfloor \sqrt{n} \rfloor(\lfloor \sqrt{n} \rfloor+1)}{2},  $$ which is $O(n^2)$. 

For the second term, we are summing $|\{(p,q) : p(l+1) + ql = n\}| l(l+1)$ from $l = \lfloor \sqrt{n} \rfloor +1$ to $l = n$. We can rewrite the first factor as $|\{(p,q) : (p+q)l + p = n\}|$. When $l \geq  \lfloor \sqrt{n} \rfloor +1 \geq \sqrt{n}$, we have that $(p+q) \leq \sqrt{n}$ and so $p < l$. Let $k = p+q$. Then, $|\{(p,q) : (p+q)l + p = n\}| \leq 1$, since for any given $l$, there is then at most one value of $k$ for which $kl + p = n$ has a positive integral solution. Then, $p+q = k$ and $p = n-kl$ determine $p$ and $q$. 

Furthermore, if we are trying to solve $kl + p = n$ with positive integral values, where $k$ is fixed and $p < k = p+q$, there is at most one value of $l$ for which $kl + p = n$ has a solution, and this $l$ must satisfy $l \leq n/k$. Therefore, we can upper bound our sum by indexing our sum over possible values of $k$:

$$\sum_{l = \lfloor \sqrt{n} \rfloor + 1}^n |\{(p,q) : p(l+1) + ql = n\}| l(l+1) \leq \sum_{k=1}^{\lfloor \sqrt{n} \rfloor} \frac{n}{k} \left( \frac{n}{k} + 1 \right)  = n^2 \sum_{k=1}^{\lfloor \sqrt{n} \rfloor} \frac{1}{k^2} + \sum_{k=1}^{\lfloor \sqrt{n} \rfloor} \frac{n}{k} \leq \zeta(2) n^2 + n \sqrt{n},$$
which is $O(n^2)$. 

Finally, the third term of equation \eqref{eq:2cyl} is $2$ times $|\{2p+q = n\}|$, which is $2 \lfloor (n-1)/2 \rfloor$ and is $\Theta(n)$. 

We have thus proved that the three terms in \eqref{eq:2cyl} are $\Theta(n^2), O(n^2),$ and $\Theta(n)$ respectively, which is enough to show that count of $n$-square two-cylinder surfaces in $\calH(2)$ with a unit-length horizontal saddle connection is $\Theta(n^2)$, as desired. 
\end{proof} 

Now we can deduce the asymptotics for those $n$-square surfaces in $\calH(2)$ with a unit saddle. We can combine these with our counting results for STSs in $\mathcal{H}(2)$ to prove our theorem.

\begin{proof}[Proof of Theorem \ref{total}] The number of one-cylinder surfaces in $\STS_n \cap \calH(2)$ is $\Theta(n^3)$ by Lemma \ref{1cyltotal}. From Lemma \ref{2cyltotal}, we have that the number of two-cylinder surfaces in $\STS_n \cap \calH(2)$ is $$\frac{5}{24}\sigma_3(n) + \frac{1}{2}\sigma_2(n) + \frac{1}{24}\sigma_1(n) - \frac{3}{4}n\sigma_1(n).$$
We know that $\sigma_1(n)$ is bounded below by $n$ and above by $1^2 + 2^2 + \ldots + n^2 = \Theta(n^3)$. From this and Lemma \ref{sigma}, we have that the number of two-cylinder surfaces in $\STS_n \cap \calH(2)$ is also $\Theta(n^3)$. Putting the asymptotics for one-cylinder and two-cylinder surfaces together gives us that the total number of square-tiled surfaces in $\mathcal{H}(2)$ is also $\Theta(n^3)$. 

For the counts of surfaces in $\mathcal{H}(2)$ with unit saddles, we know that $|\STS_n \cap \calH(2) \cap \Unit |$ is at least the number of surfaces in $\STS_n \cap \calH(2)$ with a unit-length horizontal saddle connection. From Lemmas \ref{1cylunit} and \ref{2cylunit}, we know that this lower bound is $\Theta(n^2)$. To prove an upper bound, we first observe that by symmetry, the number of surfaces in $\STS_n \cap \calH(2)$ with a unit-length vertical saddle connection is equal to the number with a unit-length horizontal saddle connection. Therefore, $|\STS_n \cap \calH(2) \cap \Unit |$ is upper-bounded by two times the number of surfaces in $\STS_n \cap \calH(2)$ with a unit-length horizontal saddle connection, which is $\Theta(n^2)$. The upper and lower bounds together show that  $|\STS_n \cap \calH(2) \cap \Unit |$ is $\Theta(n^2)$. 
\end{proof}

 \section{Questions and future work}
 Many interesting questions remain.  
\begin{enumerate}
\item Are there polynomial bounds (or any reasonable bounds) with respect to genus for the counts of each type of fake torus in each stratum? 
Do symmetry, holonomy, and visibility tori have well-defined asymptotic density in $\RSTS_n$?
\item Symmetry tori seem to be quite rare.  Give a classification of symmetry tori (perhaps some families and some sporadic examples).
\item Can any reduced STS have {\em no} relatively prime vectors in its holonomy?  (Recall that if this is impossible, then symmetry tori are always visibility tori, completing the 
last missing implication in Figure~\ref{fig:implications}.)
\item For each fixed genus $g$, there are finitely many visibility tori.
How are these distributed among the strata?  For example, are most of them in the principal stratum $\calH(1,1,\dots,1)$?
\item Recall that there is a bound $M(\alpha)$ giving the most squares $n$ for which there exists a visibility torus in $\calH(\alpha)$.  Give an explicit formula in terms of 
$\alpha$.
\item Do similar asymptotics as those for unit saddles in $\calH(2)$ hold for other strata? How about if we consider the saddle connections of other lengths greater than 1?
\end{enumerate}

%
%

\newpage

\appendix

\section{Parametrization of genus two STSs}\label{appendix:parametrization}

Knowing that STSs in $\calH(2)$ have either one or two horizontal cylinders, we can parametrize the two types of square-tiled surfaces in $\calH(2)$ as in Figure \ref{fig:H2surfaces}. For STSs in $\calH(1,1)$, we give the parametrization in Figure \ref{fig:H11surfaces}.

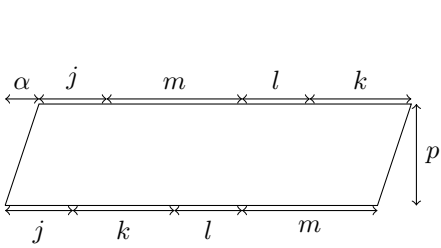
\begin{figure}[h!!!!!!]
\begin{tabular}{ccc}
\begin{tikzpicture}[scale=.6]
\draw (0,0) -- (6,0) -- (7,3) -- (1,3) -- (0,0); 
\draw [<->] (0,-0.15) -- (2, -0.15);
\node at (1,-0.35) {$k$};
\draw [<->] (2,-0.15) -- (3, -0.15);
\node at (2.5,-0.35) {$l$};
\draw [<->] (3,-0.15) -- (6, -0.15);
\node at (4.5,-0.35) {$m$};

\draw [<->] (1,3.15) -- (3, 3.15);
\node at (2,3.35) {$k$};
\draw [<->] (3,3.15) -- (6, 3.15);
\node at (4.5,3.35) {$m$};
\draw [<->] (6,3.15) -- (7, 3.15);
\node at (6.5,3.35) {$l$};

\draw [<->] (7.15, 0) -- (7.15, 3);
\node at (7.35, 1.5) {$p$};

\draw [<->] (0, 3.15) -- (1, 3.15);
\node at (0.5, 3.35) {$\alpha$};
\end{tikzpicture} & \hspace{1cm} &

\begin{tikzpicture}[scale=.5]
\draw (0,0) -- (6,0) -- (7,3) -- (1,3) -- (0,0);
\draw (1,3) -- (3,5) -- (6,5) -- (4,3);
\draw [<->] (0,-0.15) -- (6, -0.15);
\node at (3,-0.35) {$k$};
\draw [<->] (3,5.15) -- (6, 5.15);
\node at (4.5,5.45) {$l$};
\draw [<->] (7.15, 0) -- (7.15, 3);
\node at (7.35, 1.5) {$p$};
\draw [<->] (6.15, 3) -- (6.15, 5);
\node at (6.35, 4) {$q$};
\draw [<->] (0,3.15) -- (1, 3.15);
\node at (0.5,3.35) {$\alpha$};
\draw [<->] (1,5.15) -- (3, 5.15);
\node at (2,5.35) {$\beta$};
\end{tikzpicture} \\
 $k,l,m,p \in \NN$ are arbitrary & \hspace{1cm} & $k,l,p,q\in\NN$ are arbitrary, \\
 and $0\le \alpha < k+l+m$ are integer & \hspace{1cm} & and 
$0\le \alpha \le k-1$ and $0\le \beta\le l-1$ are integers.
\end{tabular}

\caption{Parametrization of square-tiled surfaces in $\calH(2)$}
\label{fig:H2surfaces}
\end{figure}

\begin{figure}[ht]
\begin{tabular}{ccc}
\begin{tikzpicture}[scale=.45]
\draw (0,0) -- (11,0) -- (12,3) -- (1,3) -- (0,0); 
\draw [<->] (0,-0.15) -- (2, -0.15) node[midway, below=0.15] {$j$};
\draw [<->] (2,-0.15) -- (5, -0.15) node[midway, below=0.15] {$k$};
\draw [<->] (5,-0.15) -- (7, -0.15) node[midway, below=0.15] {$l$};
\draw [<->] (7,-0.15) -- (11, -0.15) node[midway, below=0.15] {$m$};

\draw [<->] (1,3.15) -- (3, 3.15) node[midway, above=0.15] {$j$};
\draw [<->] (3,3.15) -- (7, 3.15) node[midway, above=0.15] {$m$};

\draw [<->] (7,3.15) -- (9, 3.15) node[midway, above=0.15] {$l$};

\draw [<->] (9,3.15) -- (12, 3.15) node[midway, above=0.15] {$k$};

\draw [<->] (12.15, 0) -- (12.15, 3) node[midway, right=0.15] {$p$};

\draw [<->] (0, 3.15) -- (1, 3.150) node[midway, above=0.15] {$\alpha$};

\end{tikzpicture} & \hspace{1cm} &

\begin{tikzpicture}[scale=.45]
\draw (0,0) -- (11,0) --(12, 2) -- (4,2) -- (6,5) -- (3,5) -- (1,2) -- (0,0);
\draw [<->] (0, -0.15) -- (2+1, -0.15) node[midway, below=0.15] {$m$};
\draw [<->] (2+1, -0.15) -- (5+1, -0.15) node[midway, below=0.15] {$l$};
\draw [<->] (5+1, -0.15) -- (9+2, -0.15) node[midway, below=0.15] {$k$};

\draw [<->] (3.1+1, 0.15+2) -- (7+2, 0.15+2) node[midway, above=0.15] {$k$};
\draw [<->] (7+2, 0.15+2) -- (10+2, 0.15+2) node[midway, above=0.15] {$l$};

\draw [<->] (3, 0.15+2+3) -- (5+1, 0.15+2+3) node[midway, above=0.15] {$m$};

\draw [<->] (0, 0.15+2) -- (1, 0.15+2) node[midway, above=0.15] {$\alpha$};
\draw [<->] (1, 0.15+2+3) -- (3, 0.15+2+3) node[midway, above=0.15] {$\beta$};

\draw [<->] (12.15, 0) -- (12.15, 2) node[midway, right=0.15] {$p$};
\draw [<->] (12.15, 2) -- (12.15, 5) node[midway, right=0.15] {$q$};

\end{tikzpicture} \\

Type A : $j, k, l, m, p \in \NN$ arbitrary  & \hspace{1cm} & Type B: $k, l, m, p, q \in \NN$ arbitrary \\
and $0 \leq \alpha < j+k+l+m$ &  \hspace{1cm} & and $0 \leq \alpha < k+l+m$, $0 \leq \beta < m$.\\[10pt]

\begin{tikzpicture}[scale=.45]
\draw (2,2) -- (5,2) --(4, 0) -- (9,0) -- (10,2) -- (8,2) -- (10,5) -- (4,5) -- (2,2);

\draw [<->] (2, 2-0.15) -- (4.9, 2-0.15) node[midway, below=0.15] {$m$};
\draw [<->] (4, -0.15) -- (7, -0.15) node[midway, below=0.15] {$l$};
\draw [<->] (7, -0.15) -- (9, -0.15) node[midway, below=0.15] {$k$};

\draw [<->] (2+2, 2+0.15+3) -- (5+2, 2+0.15+3) node[midway, above=0.15] {$m$};
\draw [<->] (4+3, 0.15+5) -- (7+3, 0.15+5) node[midway, above=0.15] {$l$};
\draw [<->] (7+1.1, 0.15+2) -- (9+1, 0.15+2) node[midway, above=0.15] {$k$};

\draw [<->] (2, 2+0.15+3) -- (2+2, 2+0.15+3) node[midway, above=0.15] {$\beta$};
\draw [<->] (9, -0.15) -- (10, -0.15) node[midway, below=0.15] {$\alpha$};

\draw [<->] (10.15, 0) -- (10.15, 2) node[midway, right=0.15] {$p$};

\draw [<->] (10.15, 2) -- (10.15, 5) node[midway, right=0.15] {$q$};

\end{tikzpicture} & \hspace{1cm} &

\begin{tikzpicture}[scale=.45]
\draw (1,2) -- (4,2) -- (2,0) --(5, 0) -- (7,2) -- (8,4) -- (5,4) -- (7,7) -- (4,7) -- (2,4) -- (1,2);

\draw [<->] (2, -0.15) -- (5, -0.15) node[midway, below=0.15] {$k$};
\draw [<->] (1, 2-0.15) -- (3.9, 2-0.15) node[midway, below=0.15] {$l$};

\draw [<->] (2+2+1+0.1, 0.15+2+2) -- (5+2+1, 0.15+2+2) node[midway, above=0.15] {$k$};
\draw [<->] (1+1+3-1, 0.15+2+3+2) -- (4+1+3-1, 0.15+5+2) node[midway, above=0.15] {$l$};

\draw [<->](1,0.15+2+2)  --(2+2+1-3, 0.15+2+2) node[midway, above=0.15] {$\beta$};
\draw [<->] (2, 0.15+2+3+2) -- (5-1, 0.15+5+2) node[midway, above=0.15] {$\gamma$};
\draw [<->] (5, -0.15) -- (7, -0.15) node[midway, below=0.15] {$\alpha$};

\draw [<->] (8.15, 0) -- (8.15, 2) node[midway, right=0.15] {$p$};
\draw [<->] (8.15, 2) -- (8.15, 4) node[midway, right=0.15] {$q$};
\draw [<->] (8.15, 4) -- (8.15, 7) node[midway, right=0.15] {$r$};

\end{tikzpicture}\\

Type C: $k, l, m, p, q \in \NN$ arbitrary &  \hspace{1cm} & Type D: $k, l, p, q, r \in \NN$ arbitrary \\
and $0 \leq \alpha < k+l$ and $0 \leq \beta < l+m$ &  \hspace{1cm} & and $0 \leq \alpha < k$, $0 \leq \beta < k+l$, $0 \leq \gamma < l$.

\end{tabular}
\caption{Parametrization of square-tiled surfaces in $\calH(1,1)$.}
\label{fig:H11surfaces}
\end{figure}


\end{document}